\documentclass[12pt, reqno]{amsart} 
\usepackage{amsmath,amssymb,amsthm}
\usepackage{verbatim}
\usepackage{libertine}
\usepackage{enumitem}
\usepackage{bm} 
\usepackage[libertine]{newtxmath}
\usepackage[colorlinks, urlcolor=blue,  citecolor=purple]{hyperref}
\usepackage[all]{xy}%
\usepackage[utf8]{inputenc}
\usepackage{xcolor}
\usepackage{tikz-cd}

\usepackage{mathabx} 

\def\setminus{\mathchoice
	{\mathbin{\vrule height .92ex width 1.81ex depth -.58ex}}
	{\mathbin{\vrule height .92ex width 1.81ex depth -.58ex}}
	{\mathbin{\vrule height .65ex width 1.00ex depth -.43ex}}
	{\mathbin{\vrule height .50ex width 0.770ex depth -.34ex}}
}

\newcommand{\bmone}{\bm{\mathrm{1}}}
\newcommand{\bma}{\bm{\mathrm{a}}}
\newcommand{\bmA}{\bm{\mathrm{A}}}

\newcommand{\bmD}{\bm{\mathrm{D}}}

\newcommand{\bmG}{\bm{\mathrm{G}}}
\newcommand{\bmh}{\bm{\mathrm{h}}}
\newcommand{\bmH}{\bm{\mathrm{H}}}

\newcommand{\bmN}{\bm{\mathrm{N}}}
\newcommand{\bmO}{\bm{\mathrm{O}}}

\newcommand{\bmP}{\bm{\mathrm{P}}}

\newcommand{\bmr}{\bm{\mathrm{r}}}

\newcommand{\bms}{\bm{\mathrm{s}}}

\newcommand{\bmX}{\bm{\mathrm{X}}}

\newcommand{\bmeta}{\bm{\eta}}
\newcommand{\bmgamma}{\bm{\gamma}}

\newcommand{\bmtheta}{\bm{\theta}}

\newcommand{\calB}{\mathcal{B}}

\newcommand{\calD}{\mathcal{D}}
\newcommand{\calF}{\mathcal{F}}
\newcommand{\calG}{\mathcal{G}}

\newcommand{\calL}{\mathcal{L}}

\newcommand{\calN}{\mathcal{N}}
\newcommand{\calO}{\mathcal{O}}

\newcommand{\calS}{\mathcal{S}}

\newcommand{\calU}{\mathcal{U}}

\newcommand{\diff}{\mathrm{d}}  

\newcommand{\fraka}{\mathfrak{a}}

\newcommand{\frakK}{\mathfrak{K}}

\newcommand{\overbmX}{\overline{ 
\bm{\mathrm{X}}
}}

\newcommand{\rmG}{\mathrm{G}}

\newcommand{\rmm}{\mathrm{m}}

\newcommand{\rmU}{\mathrm{U}}

\newcommand{\scrC}{\mathscr{C}}
\newcommand{\scrD}{\mathscr{D}}

\newcommand{\scrH}{\mathscr{H}}
\newcommand{\scrI}{\mathscr{I}}
\newcommand{\scrL}{\mathscr{L}}

\newcommand{\wtB}{\widetilde{B}}

\newcommand{\Z}{\mathbb{Z}}
\newcommand{\Q}{\mathbb{Q}}
\newcommand{\R}{\mathbb{R}}
\newcommand{\C}{\mathbb{C}}

\DeclareMathOperator{\SL}{ {\bm{\mathrm{SL} }   }   }

\DeclareMathOperator{\SU}{ {\bm{\mathrm{SU} }   }   }
\DeclareMathOperator{\SO}{ {\bm{\mathrm{SO} }   }   }

\newcommand{\ep}{\varepsilon}

\newcommand{\normm}[1]{\left\lvert#1\right\rvert}
\newcommand{\norm}[1]{\left\lVert#1\right\rVert}

\newcommand{\bs}{\backslash}
\newcommand{\la}{\langle}
\newcommand{\ra}{\rangle}
\newcommand{\midd}{\;\middle\vert\;}
\newcommand{\ideal}{\triangleleft}

\DeclareMathOperator{\diag}{\mathrm{diag}}

\DeclareMathOperator{\Hom}{\mathrm{Hom}}

\DeclareMathOperator{\id}{\mathrm{id}}

\DeclareMathOperator{\Lip}{\mathrm{Lip}}

\DeclareMathOperator{\Nm}{\mathrm{Nm}}

\DeclareMathOperator{\Prob}{\mathrm{Prob}}

\DeclareMathOperator{\supp}{\mathrm{supp}}

\DeclareMathOperator{\tr}{\mathrm{tr}}

\DeclareMathOperator{\Vol}{\mathrm{Vol}}

\newtheorem{thm}{Theorem}[section]

\newtheorem{lem}[thm]{Lemma}
\newtheorem{prop}[thm]{Proposition}

\newtheorem{rmk}[thm]{Remark}

\theoremstyle{plain}

\theoremstyle{definition}

\newtheorem*{remark*}{Remark}
\newtheorem*{remarks*}{Remarks}

\usepackage{geometry}
\begin{document}

\title[]{Effective count of integer points on ternary affine quadrics and effective equidistribution}
\author[]{Runlin Zhang}

\address{College of Mathematics and Statistics, Center of Mathematics, Chongqing University, 401331, Chongqing,  China}
\email{runlinzhang@cqu.edu.cn}

\keywords{Equidistribution of homogeneous measures, affine quadrics}
\subjclass[2010]{11D45, 37A17}

\maketitle

\begin{abstract}
We study the effective equidistribution of certain infinite homogeneous measures and related counting problems through mixing. In this way, we obtain smooth versions of counting theorems studied by Oh--Shah and later by Kelmer–Kontorovich over a number field. In the appendix, we apply the meromorphic continuation of Hilbert–Asai Eisenstein series to obtain the authentic counting. 
\end{abstract}


\section{Introduction}

Given a variety with infinitely many integral points, a fundamental problem is to obtain an asymptotic count for points of bounded height.
For homogeneous varieties, a general counting strategy was proposed by Duke--Rudnick--Sarnak \cite{DukRudSar93}, which reduces the counting problem to an equidistribution problem on a different but related homogeneous space.
Concretely, if $\bmG/\bmH$ is the variety under consideration and $\Gamma$ is an arithmetic lattice, then 
the auxiliary homogeneous space is $\bmG(\R)/\Gamma$ and the equidistribution problem is to study the limiting distribution of homogeneous measures supported on $g\bmH(\R)\Gamma/\Gamma$ as $g$ varies. To derive the desired count, one aims to show that, ``most of the time'', the limiting measure is the full $\bmG(\R)$-invariant Haar measure.

Dynamical methods for this problem were introduced by \cite{EskMcM93, EskMozSha96}. 
It was understood that any limiting measure must be homogeneous. Thus when the $\bmH$ is maximal in  $\bmG$, the problem has essentially been solved.
However, when intermediate groups exist between $\bmH$ and $\bmG$, the limit could be ``focused'' on such a proper intermediate group.
When the homogeneous measure on $\bmH(\R)\Gamma/\Gamma$ is finite, this focusing phenomenon can be analyzed by studying certain equivariant compactifications of $\bmG/\bmH$ by \cite{zhangrunlin_2024}. A natural next step is to generalize this approach to the setting of infinite homogeneous measures.

From another viewpoint, one hopes not only an asymptotic count but also a polynomially effective error term, both for its intrinsic interest and for potential applications. When $\bmH$ is symmetric with no nontrivial $\Q$-characters,  extending the work of \cite{EskMcM93}, an effective count has been obtained by \cite{BenOh12} using a thickening trick together with effective mixing.  Note that the absence of nontrivial $\Q$-characters is equivalent to the homogeneous measure on $\bmH(\R)\Gamma/\Gamma$ being finite. 
A natural goal is to drop this finiteness assumption. A first step in this direction was taken by \cite{OhSha14} where they consider 
ternary affine quadrics $q(x)=m$ where $q$ is an integral ternary quadratic form and $m$ is a nonzero integer such that $-m\det(q) $ is a square.
The homogeneous space in question is $\SL_2$ modulo a maximal $\Q$-split torus.
 Using effective mixing, they obtained a count with logarithmic error.
  Subsequently, 
 \cite{KelKon18, Kelmer_Kontorovich_2020} obtained a polynomially effective count 
 using Eisenstein series, revealing a second order term\footnote{Independently, Oh and Shah obtain a smooth version of this using a deep result of Strombergersson \cite{Strombergsson_2004}.}. 
 Huang \cite{Huang_quadrics_I}  also obtained the second order term using Heath-Brown's delta method, albeit with a logarithmic rather than a polynomial error.

This naturally raises the question: can effective mixing alone, without appealing to Eisenstein series or the spectral theory of automorphic forms, yield a polynomially effective count? Coincidentally, the relevant homogeneous measures are infinite. It therefore also serves as natural testing grounds for generalizing the framework of \cite{zhangrunlin_2024}. With these two motivations, we revisit this problem in the present paper. As a mild generalization, we work with a general number fields.

It turns out that effective mixing alone suffices to establish a smooth version of the counting result. However, upgrading this to an authentic count requires showing that certain distributions are in fact continuous measures. Unfortunately, this lies beyond the scope of effective mixing.

Finally, we note that  this approach is promising to tackle counting problem on a much wider class of homogeneous varieties, particularly in light of the recent progress in effective Ratner theorems \cite{Lindenstrauss_Mohammadi_Wang_effective, Yanglei_2024}. 
One may hope that an effective version of \cite{EskMozSha96} could be deduced from effective Ratner theorem (see \cite{Sarkar_2025} for recent advance). 
If so, the ideas developed in this paper should be instrumental in converting such effective equidistribution results into effective counting statements.

\subsection{Notations}

Throughout the paper, we adopt the following notation:
\begin{itemize}
    \item    $k$ is a number field of degree $l$ over $\Q$. 
     Assume that $k$ has $l_1$ real embeddings  and  $l_2$ pairs of complex embeddings 
     \[
        \{\sigma_{1},...,\sigma_{l_1}\} \sqcup \{\sigma_{l_1+1},... , \sigma_{l_1+ l_2} \} \sqcup \{ \overline{\sigma}_{l_1+1},... , \overline {\sigma}_{l_1+ l_2} \}.
     \]
     So $l= l_1 + 2l_2$.
    We also let $\nu_1,...,\nu_{l_1+l_2}$ and $k_{\nu_1},...,k_{\nu_{l_1+l_2}}$ be the corresponding valuations (identified with the pull back of $\sigma_{l_i}$'s) and local fields (isomorphic to either $\R$ or $\C$).
    The product of all infinite places $k_{\infty}:= k \otimes_{\Q} \R$ is then identified with
    $\prod_{i=1}^{l_1+l_2} k_{\nu_i} \cong \R^{ l_1} \oplus \C^{ l_2}$. 
    Let $\calO_k$ be its ring of integers,
        \item Given a Lie group and a lattice $\Gamma$ of $G$, we fix a right invariant metric $d_G$ on $G$ and let $d_Y$ denote the quotient metric on $Y:=G/\Gamma$.
   \item Given a positive number $T$, let $\bmO(T)$ denote a real number whose absolute value is $\leq T$. No implicit constants are implied for this notation.
   On the other hand $O(T)$ is the usual big $O$ notation and the dependence of the implicit constants will be specified in the context.
\end{itemize}

Let $q$ be a nondegenerate ternary quadratic form over $k$ and  $m\in k^{\times}$ be such that $-m \det{q} \in (k^{\times})^2$.
Let $\bmX$ be the affine quadric defined by $\{q(x_1,x_2,x_3)=m\} \subset \bmA_k^3$ .
$\bmX$ is then a homogeneous space under the action $\mathrm{Spin}_q \cong \bmG:= \SL_2$. For each $x\in \bmX(k)$, the stabilizer of $x$ in $\bmG$ is a maximal $k$-split torus since $q$ restricted to the orthogonal complement of $x$ is $k$-isotropic. 
We  assume that $ \bmX(k)\neq \emptyset $ and let $\Gamma $ be an arithmetic subgroup of $\bmG$.

\subsection{Counting}

Fix some Euclidean metric on $\R^n$ and $\C^n$.
For $\vec{x}\in k^n$, define
\[
    \norm{\vec{x}}_{k_{\infty}} := \prod_{i=1}^{l_1} \norm{\sigma_i(\vec{x})} \cdot \prod_{j=1}^{l_2} \norm{\sigma_{l_1+j}(\vec{x})}^2.
\]
For $x\in \bmX(k)$, define
\[
        N_{\Gamma.x, T} := \# \left\{
         y \in \Gamma.x \midd  \norm{y}_{k_{\infty}} \leq T
        \right\}.
\]
Let $K_0:= \prod_{l_1} \SO_2(\R) \times \prod_{l_2} \SU_2(\R)$ be a maximal compact subgroup of $G:= \SL_2(k_{\infty})$.
For a compactly supported smooth $K_0$-invariant function $\varphi : Y:= G /\Gamma \to \R$, define the smoothed count
\[
        N_{\Gamma.x, T} (\varphi )  := 
       \int_Y  \varphi([g])  N_{g\Gamma.x, T}  \, \diff\rmm_{Y}([g]).
\]
\begin{thm}\label{theorem_smoothed_count_quadric}
For every rational point $x\in X(k)$ and $\varphi \in C_c^{\infty}(K_0 \bs Y)$ with $\int_Y \varphi(y) \,\diff\rmm_Y(y) =1 $,
there exist two monic polynomials $p_1,p_2$ of degree $l_1+l_2$ and $l_1+l_2-1$ respectively, $c_1,c_2>0$, $c_2(\varphi) \in \R$ and $\delta\in (0,0.5)$ such that
\[
    N_{\Gamma.x, T} (\varphi) = c_1 T p_1(\log{T})+ c_2(\varphi) c_2 T^{l_1+l_2} p_2(\log{T})  +  O (T^{l_1+l_2-\delta}).
\]
\end{thm}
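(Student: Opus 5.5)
Following the Duke--Rudnick--Sarnak strategy, I would unfold the smoothed count. Write $\bmH=\mathrm{Stab}(x)$, a maximal $k$-split torus, $H=\bmH(k_\infty)\cong (k_\infty^\times)$, and $\Gamma_x=\Gamma\cap H$. Then
\[
N_{\Gamma.x,T}(\varphi)=\int_{G/H} \bmone_{\{\norm{g.x}_{k_\infty}\le T\}}\,\Big(\int_{H/\Gamma_x}\varphi([gh])\,\diff\mu_H(h)\Big)\,\diff\mu_{G/H}(gH),
\]
so everything reduces to the asymptotics of the $H$-orbit period $F_\varphi(g):=\int_{H/\Gamma_x}\varphi(gh)\,\diff\mu_H$ as $gH\to\infty$ in $G/H$. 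The orbit $H\Gamma/\Gamma$ has infinite volume: $H/\Gamma_x\cong \R^{l_1+l_2}/\Z^{l_1+l_2-1}$ by Dirichlet's unit theorem, with one noncompact direction (the diagonal norm direction) escaping into the cusp.

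I would use the Cartan-type decomposition $G=K_0 A^+ H$ with $A^+$ the positive part of a torus transversal to $H$ (the $\SO_2$-conjugate of $H$ in each factor), so that $\norm{k a_t h.x}_{k_\infty}\asymp\prod_i e^{c t_i}$. Next I would truncate $H/\Gamma_x$ into a compact piece $H_R$ (norm parameter $|s|\le R$) and two cuspidal ends. For the compact piece, I would use the thickening trick plus effective mixing on $Y$ (spectral gap for $\SL_2$ over $k$): for $a=a_t$ the pushforward $a_t\cdot\mu_{H_R}$ equidistributes to $\mathrm{vol}(H_R)\rmm_Y$ with error $e^{-\delta\min_i t_i}$ times Sobolev norms of $\varphi$ and polynomial losses in $R$. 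This contributes the main term, whose $\log$-polynomial comes from $\mathrm{vol}(H_R)\asymp R$ together with the volume of $\{t: \sum t_i\le \log T\}$ in the $A^+$-coordinates (degree $l_1+l_2$ from the simplex volume times one $R$ factor, after optimizing $R\approx\log T$).

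For the cuspidal ends, the periodic orbit $h\Gamma_x$ goes straight up a cusp. There $\varphi(g h_s)$ is governed by the constant term of $\varphi$ along the relevant unipotent radical: translated by $g$, a long piece of the orbit lies near a horocycle orbit $gN\Gamma/\Gamma$ for the Borel $\bmP\supset\bmH$. I would use effective equidistribution of expanding horospheres (again from effective mixing) to replace $\varphi$ on those pieces by its constant term $\varphi_N$, which on the cusp is a function of the height $y$ alone. Integrating $\varphi_N(y)$ against the hyperbolic change of variables yields a term of size $T^{l_1+l_2}$ times a polynomial in $\log T$ of degree $l_1+l_2-1$. Its coefficient $c_2(\varphi)$ is a regularized integral of $\varphi_N$, essentially a Mellin transform at the critical point; this is why it depends on $\varphi$. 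Summing over the finitely many cusps containing the two ends of $\Gamma_x\bs H$ gives $c_2$.

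I expect the main obstacle to be the transition region, where the orbit piece is neither in the compact truncation nor deep in the cusp. Both error terms there must be balanced with uniform polynomial control in $R$ and $t$. In particular, the Sobolev norms of the cusp-truncated test functions grow with height, and the degenerate directions of $A^+$ (some $t_i$ small) lie outside the reach of mixing. I would handle these by choosing $R=\eta\log T$ and bounding the non-generic $t$-region trivially by its volume, obtaining the saving $T^{-\delta}$.
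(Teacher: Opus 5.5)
Your skeleton matches the paper's: unfold, reduce to the orbital integral $\int\varphi(g.z)\,\diff\rmm_{[H]}(z)$ of Theorem~\ref{theorem_equidistribution_infinite_homogeneous_quadric}, get $\frac{1}{l}\log T_g\int\varphi\,\diff\rmm_Y$ from the bulk by thickening and mixing, and get the constant $\scrD^+(\varphi)+\scrD^-(\varphi)$ from the two ends. You are also right that this constant is a regularized Mellin transform of the constant term.

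\textbf{Main gap: the ends.} Near an end one writes $ah^{\Delta}_t=b\,h^{\Delta}_s u_{\vec r}h^{(1)}$ (Lemma~\ref{lemma_KHU_decomposition}) with $\norm{u_{\vec r}.e_2}_{k_\infty}$ large. What ``replacing $\varphi$ by $\varphi_N$'' actually requires is that $(u_{\vec r})_*\rmm_z$ be polynomially close to $\rmm_z^U$, uniformly in $\vec r$ (Lemma~\ref{lemma_focusing_1}). Here $\rmm_z^U$ is the $U$-invariant measure on the fixed compact piece $H[\ep_0]U.z$ of an orbit of the Borel subgroup $HU$.

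Neither effective mixing on $Y$ nor equidistribution of expanding horospheres can give this:
\begin{itemize}
\item The limit $\rmm_z^U$ is singular to $\rmm_Y$, while a mixing argument can only ever output $\rmm_Y$.
\item The horosphere $H[\ep_0]U.z$ is fixed, not expanding.
\item At a place where $r_j$ is large, the only direction not expanded by $\mathrm{Ad}(u_{\vec r})$ is $U$, so the orbit piece cannot be thickened to an open subset of $Y$.
\end{itemize}

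In coordinates, the question is whether $\{e^{-2\vec t}.\vec r\}$, for $\vec t$ in a small box, equidistributes in $\R^l/\Lambda$ with $\Lambda$ commensurable with $\calO_k$. When one coordinate $r_j$ dominates, this set is a long segment of a coordinate line (or a piece of an annular sector in a complex coordinate plane). Whether it equidistributes depends on the arithmetic of $\Lambda$; for instance, a coordinate line in $\R^l/\Z^l$, $l\geq 2$, is closed.

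The paper proves this in Lemma~\ref{lemma_equidistribution_coordinate_lines_tori_number_fields} (Appendix~\ref{appendix_B}). It combines an effective Kronecker theorem with the Diophantine bound $\normm{\sigma_j(\beta)}\geq\norm{\scrL_\beta}^{-(l-1)}$ for $0\neq\beta\in\calO_k$, which comes from $\normm{\Nm(\beta)}\geq 1$. At complex places a small set of bad angles must also be discarded. This ingredient is missing from your plan, and without it neither the ends, nor $c_2(\varphi)$, nor the power saving is established.

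\textbf{Secondary gap: the bulk.} As set up, your bulk estimate does not give a power saving either. You allow an error $e^{-\delta\min_i t_i}$ and propose to bound trivially the region where some $t_i$ is small. When $l_1+l_2\geq 2$, the mass of $\rmm_{G/H}$ on $B_T$ concentrates where $\norm{g.x}_{k_\infty}\approx T$ and is spread evenly over how that size is shared among the places. So the set where some place contributes at most $T^{\eta}$ carries a proportion $\asymp\eta$ of $\rmm_{G/H}(B_T)$, and the trivial bound loses something of the order of the main term.

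The degenerate directions are in fact within reach of mixing. Since $\Gamma$ is irreducible, Theorem~\ref{theorem_effective_mixing} gives decay in the product $\norm{g.e_1}_{k_\infty}\norm{g.e_2}_{k_\infty}$ over all places. To exploit this, the paper:
\begin{itemize}
\item uses $H^{(1)}$-invariance of $\rmm_{\ep_0}$ to rewrite $h^{\Delta}_t=h_{\vec t}h^{(1)}$ with $\log t_j$ proportional to $\log\norm{a_{s_j}.e_1}$;
\item at places with $t_j>11$, writes $a_{s_j}h_{t_j}=b_ja_{\lambda_j}u^-_{r_j}$ and cuts the $H$-piece into about $\normm{r_j}$ pieces;
\item thickens those pieces along the central-stable directions of $a_{\lambda_j}$, which are transverse to them by Lemma~\ref{lemma_wave_front}.
\end{itemize}

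Finally, a remark on sizes. The second-order term is $c_2(\varphi)\,\rmm_{G/H}(B_T)$, of order $T(\log T)^{l_1+l_2-1}$. The paper's proof gives $c_2(\varphi)c_2T\,p_2(\log T)$ with error $O(T^{1-\delta})$, so the power $T^{l_1+l_2}$ in the displayed statement is a misprint for $T$.
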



It should be possible to drop the $K_0$-invariance condition,
however, this would require an analysis on a more complicated compactification, which is not carried out here.
With a different norm, a qualitative version of this was obtained in \cite{zhangrunlinAnnalen2019}.

\begin{rmk}
If one defines 
\[
   B_T:= \left\{ gH\in G/H \midd \norm{g.x}_{k_{\infty}} \leq T \right\}
   ,\quad
   \widetilde{B}_T:= \left\{
    gH^{(1)} \in G/H^{(1)} \midd gH \in B_T,\; \norm{g.e_1}_{k_{\infty}} >1,\;\norm{g.e_1}_{k_{\infty}}>1 \right\},
\]
then the theorem above actually states 
\[
    N_{\Gamma.x, T} (\varphi) = \rmm_{G/H^{(1)}} (\widetilde{B}_T) + c_2(\varphi) \rmm_{G/H}(B_T) + O (T^{l_1+l_2-\delta}).
\]
The constant $c_2(\varphi) = \scrD^+(\varphi) + \scrD^-(\varphi)$ is related to Eisenstein series. For $\Gamma = \SL_2(\calO_k)$, this is explained in App.\ref{appendix_C} and implies the count
 \[
    N_{\Gamma.x, T}  = \rmm_{G/H^{(1)}} (\widetilde{B}_T) + c_2 \rmm_{G/H}(B_T) + O (T^{l_1+l_2-\delta})
\]
for some $c_2\in \R$ being the evaluation of the function $c^{+}+c^{-}$ at the identity coset.
\end{rmk}

\subsection{Equidistribution of infinite homogeneous measures}
By the unfolding and folding arguments from \cite{DukRudSar93, EskMcM93}. the orbital counting statement is related to effective equidistribution.
To state it, we let 
\begin{equation}\label{equation_definition_H}
   H : = \left\{ 
       h_{\vec{t} }:=
       \left(
       \begin{bmatrix}
           t_1  &   0  \\
          0 &    t_1^{-1}
    \end{bmatrix}
    ,...,
     \begin{bmatrix}
           t_{l_1+l_2} &   0  \\
          0 &    t_{l_1+l_2}^{-1}
    \end{bmatrix}
    \right)
     \midd \vec{t} = (t_1,...,t_{l_1+l_2}) \in k_{\infty}^{\times}
   \right\}
\end{equation}
Consider the map
\begin{equation*}
\begin{aligned}
   \mathrm{Nm}: H \cong k_{\infty}^{\times } \cong  ( \R^{\times}  )^{l_1} \oplus (\C^{\times } )^{l_2} &\to \R^+
   \\
   (t_i)_{i=1,...,l_1+l_2} &\mapsto 
   \prod_{i=1}^{l_1} |t_i| \cdot \prod_{j=1}^{l_2} |t_{l_1+j} | ^2
\end{aligned}
\end{equation*}
and let $H^{(1)}:= \mathrm{ker}(\mathrm{Nm})$. On the other hand, there is a subgroup of $H$ parametrized by
\[
   H^{\Delta} : = \left\{ 
       h^{\Delta}_{t}:= h_{(t,...,t)} \midd {t}\in \R^+
   \right\}.
\]
Taking product induces an isomorphism (for any arithmetic subgroup $\Gamma$)
\begin{equation}\label{equation_split_H_mod_Gamma}
   H^{\Delta} \times H^{(1)}\Gamma/\Gamma \cong H\Gamma/\Gamma.
\end{equation}
On the left hand side, there is a natural Haar measure $t^{-1}\diff t \otimes \rmm _{[H^{(1)}]}$ where $ \rmm _{[H^{(1)}]}$ denotes the unique probability Haar measure on $H^{(1)}\Gamma/\Gamma$, a compact subset by Dirichlet's unit theorem.
Given an arithmetic subgroup $\Gamma$, let $\rmm_G, \rmm_H, \rmm_{G/H}$ be the unique Haar measures on $G,H, G/H$ respectively such that 
\begin{itemize}
     \item they are compatible, i.e. $\int_G f(g) \,\diff \rmm_G(g)= \int_{G/H} \int_H f(gh) \,\diff\rmm_H(h)\diff \rmm_{G/H}(gH)$ for any compactly supported continuous function $f$ on $G$;
     \item the induced measure $\rmm_{Y}$ on the quotient $Y:=G/\Gamma$  is a probability measure;
     \item the induced measure $\rmm_{[H]}$ on $H \Gamma/\Gamma$ coincides with $t^{-1}\diff t \otimes \rmm _{[H^{(1)}]}$ via Eq.(\ref{equation_split_H_mod_Gamma}).
\end{itemize}

Last, let $e_1, e_2$ be the standard basis of $k^2$. 

\begin{thm}\label{theorem_equidistribution_infinite_homogeneous_quadric}
There exist $\delta\in (0,1)$ and distributions  $\scrD^+,\scrD^-$ such that the following holds.
For $g \in G$ and $\varphi \in C_c^{\infty} (Y)$ that is $K_0$-invariant,
\[
   \int \varphi(g.z) \, \diff \rmm_{[H]}(z)
   = \frac{1}{l} \log{T_g} \cdot \int \varphi(y) \, \diff \rmm_{Y}(y)
   + \scrD^+(\varphi) + \scrD^-(\varphi) + O_{\supp (\varphi)}(\calS(\varphi) T_g^{-\delta})
\]
where $T_g :=  \norm{g.e_1}_{k_{\infty}} \norm{g.e_2}_{k_{\infty}}$.
\end{thm}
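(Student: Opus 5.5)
Write $g = k a_s n$ via the Iwasawa/Cartan decomposition. Since $\varphi$ is $K_0$-invariant, the left $K_0$ factor drops out. Because $H$ is normalized by nothing useful, I instead use the $KAK$-type description: $T_g$ is essentially $\norm{g.e_1}_{k_\infty}\norm{g.e_2}_{k_\infty}$. So I reduce to $g = k\, u\, h$ with $h\in H$ (which can be absorbed into the $H$-integral) and $u$ in a transversal to $H$, parametrized by unipotent-type elements. Concretely, the plan is to write the $H$-orbit integral via Eq.(\ref{equation_split_H_mod_Gamma}) as
\[
\int_0^\infty \int_{[H^{(1)}]} \varphi(g h^\Delta_t z)\,\diff\rmm_{[H^{(1)}]}(z)\,\frac{\diff t}{t},
\]
and to split the $t$-line into three ranges: a middle range $T_g^{-1/2l}\lesssim$ (suitably normalized) $t \lesssim T_g^{1/2l}$, and two tails $t\to 0$ and $t\to\infty$.

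In the middle range, the translate $g h^\Delta_t [H^{(1)}]$ is a compact orbit of the (anisotropic-modulo-center-like) compact torus orbit pushed by an element of $G$ that is far from $H$. I expect the translate $g h_t^\Delta$ to contain a large component in the expanding horospherical directions. Effective mixing of $G$ on $Y$ (spectral gap for $\Gamma$ arithmetic), used with a thickening of $[H^{(1)}]$ in the transverse directions, then gives
\[
\int \varphi(g h^\Delta_t z)\,\diff\rmm_{[H^{(1)}]}(z)=\int\varphi\,\diff\rmm_Y+O(\calS(\varphi)\,\mathrm{dist}^{-\delta}).
\]
Here $\mathrm{dist}$ is the distance of $g h^\Delta_t$ from $H$, which is a positive power of $\min(t^{l}\norm{g.e_1}, t^{-l}\norm{g.e_2})$. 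Integrating over the middle range yields the main term. Its length in $\log t$ is exactly $\frac1l\log T_g$ up to $O(1)$, and the $O(1)$ discrepancy is absorbed into the boundary terms below.

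For the tails, as $t\to\infty$ the orbit $g h^\Delta_t [H^{(1)}]$ is pushed toward the cusp corresponding to $e_1$ (or, more precisely, it approaches a translate of the horospherical orbit $gN\Gamma$, where $N$ is the unipotent radical of the upper triangular Borel). For $t\to 0$, the same happens with $e_2$ and the opposite Borel. On this range, writing $g h_t^\Delta = n_g a_{g,t} h'$, the $H^{(1)}$-orbit becomes $\ep$-close to a piece of the compact $N$-orbit (the closed horosphere, compact since $N\cap\Gamma$ is a lattice in $N\cong k_\infty$) translated by a diagonal element $a$ of norm comparable to $t^{l}\norm{g.e_1}^{-1}$. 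I then define
\[
\scrD^+(\varphi):=\int_{0}^{\infty}\Big(\int_{[N]}\varphi(a_s n)\,\diff n-\bmone_{s\ge 1}\int\varphi\,\diff\rmm_Y\Big)\frac{\diff s}{s},
\]
and similarly $\scrD^-$ with the opposite horosphere. By effective equidistribution of expanding horospheres (again from mixing), the integrand is $O(\calS(\varphi)s^{-\delta})$ as $s\to\infty$, and it vanishes for $s$ small because $\supp\varphi$ is compact while $a_s[N]$ escapes to the cusp. So $\scrD^\pm$ is well defined. The $K_0$-invariance is what allows me to replace the $k$-component of $g$ and align the boundary horosphere independently of $g$. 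The resulting error is a power of $T_g^{-1}$.

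The main obstacle is the transition step: showing that for $t$ in the tail range, $g h^\Delta_t[H^{(1)}]$ is $T_g^{-\delta}$-close to $a[N]$ uniformly, and quantifying the error with respect to $\calS(\varphi)$ and $\supp\varphi$. I would handle this by a direct matrix computation. The element $g h^\Delta_t$ maps $e_1$ to a vector of large norm, so modulo $K_0$ it is upper-triangular up to an error $O(t^{-2l}T_g^{-1})$ in the lower-left entry, and $H^{(1)}$ fibers over $[N]$ in the limit. Finally, I would match the cutoffs between the middle and tail ranges so that the total error is $O_{\supp\varphi}(\calS(\varphi)T_g^{-\delta})$.
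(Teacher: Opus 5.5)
There is a genuine gap in the tail step, and a matrix computation cannot close it. Your overall architecture is the paper's: split the $t$-line, use mixing with thickening in the middle, horospherical behaviour in the tails, and regularized integrals for $\scrD^{\pm}$.

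The tail step you flag is not a proximity statement. By Lemma~\ref{lemma_KHU_decomposition} one has exactly $gh^{\Delta}_t=k\,h^{\Delta}_s u_{\vec r}\,h^{(1)}$ with $k\in K_0$; there is no lower-left error to control. So $gh^{\Delta}_t[H^{(1)}]$ already lies inside the compact orbit $k\,h^{\Delta}_s H^{(1)}U\Gamma/\Gamma$, and what must be shown is that the pushed measure \emph{equidistributes} there at rate $\norm{u_{\vec r}.e_2}_{k_\infty}^{-\delta}$.

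Since $u_{\vec r}h_{e^{\vec\tau}}=h_{e^{\vec\tau}}u_{e^{-2\vec\tau}.\vec r}$, this is a question about how $[e^{-2\vec\tau}.\vec r]$ is distributed in the torus $U/(U\cap\Gamma)\cong\R^l/\Lambda$. For unbalanced $\vec r$ it becomes equidistribution of long segments of coordinate lines. For example, with $\vec r=(R,0)$ and $k$ real quadratic, the $U$-coordinate never leaves the image of $\R\vec e_1$.

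That is an arithmetic fact. It holds because $\Lambda$ is commensurable with $\calO_k$, and $\normm{\Nm(\beta)}\ge 1$ forces $\normm{\sigma_j(\beta)}\ge\norm{\scrL_\beta}^{1-l}$, which feeds an effective Kronecker theorem (Lemmas~\ref{lemma_focusing_1} and~\ref{lemma_equidistribution_coordinate_lines_tori_number_fields}). For a lattice containing a vector on a coordinate axis it is false. Mixing on $Y$ cannot supply it, since these measures live on a null subset of $Y$.

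Moreover, equidistribution can only hold after averaging in the $H^{\Delta}$ direction. For $k=\Q$, $[H^{(1)}]$ is a point, so your fixed-$t$ integrand $\int\varphi(gh^{\Delta}_t z)\,\diff\rmm_{[H^{(1)}]}(z)$ is a single value of $\varphi$ and never approaches a horocycle average. The paper builds an $H[\ep_0]$-window into $\rmm_{\ep_0}$. This is harmless because window-averaging does not change $\int\cdot\,\frac{\diff t}{t}$.

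The limit you name is also wrong once $l_1+l_2\ge 2$. For $z=h^{(1)}\Gamma$ one has $u_{\vec r}z\in h^{(1)}U\Gamma/\Gamma$, so different points of $[H^{(1)}]$ land on different closed horospheres. The limit is therefore the $H^{(1)}U$-invariant probability measure on $H^{(1)}U\Gamma/\Gamma$ (the paper's $\rmm^{U}_{[H^{(1)}]}$), not the measure on the single horosphere $N\Gamma/\Gamma$. Since $H^{(1)}\not\subset K_0$, $K_0$-invariance of $\varphi$ does not identify the two, so your $\scrD^+$ is not the right distribution.

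Finally, reduce to $g=a\in A$ and write $x_1^{-1}=t^l\norm{a.e_1}_{k_\infty}$ and $x_2^{-1}=t^{-l}\norm{a.e_2}_{k_\infty}$. Your middle range then reaches $\max(x_1,x_2)\approx 1$, where the integrated mixing error $\int\max(x_1,x_2)^{\delta}\frac{\diff t}{t}$ is $O(1)$, not $O(T_g^{-\delta})$. The middle range must stop at $\max(x_1,x_2)\le T_g^{-c}$. The tails then cover $x_2\ge T_g^{-c}$ (resp.\ $x_1\ge T_g^{-c}$), and there you need two inputs. One is the focusing equidistribution above, with error $x_1^{\delta'}$. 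The other is effective equidistribution of the expanding $H^{(1)}U$-orbits, i.e.\ the H\"older continuity at $0$ of the boundary values in Theorem~\ref{theorem_Holder_continuous_quadrics}.
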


The precise definition of $\scrD^{\pm}(\varphi)$ will be given in Eq.(\ref{equation_distributions_+-}).

\subsection{Outline of the paper}
This work is largely inspired by \cite{OhSha14, KelKon18}. Let us briefly explain the main content of this paper.

Once we fixed some homogeneous measure $\rmm_{[H]}$, its translates form a homogeneous space of the form $G/H$. Its limiting behaviour, then, is encoded into certain compactification of this space.
The main novelty of the paper is the observation that effective equidistribution of \textit{finite} homogeneous measures can be rephrased into Holder continuity of certain functions on this compactification space.
This compactified space would have some corner structure and the effective equidistribution of  \textit{infinite} homogeneous measures is reduced to the effective equidistribution of certain hyperbolas in the corner.
From this perspective, the appearance of the second order term could be explained as some kind of focusing phenomenon as in \cite{EskMozSha96}. 


We deduce the counting from the equidistribution (Sec.\ref{2.2}) first and then prove the effective equidistribution assuming Holder continuity property of certain functions (Sec.\ref{2.3}).
The proof of the latter one is the key. The corresponding effective equidistribution (both the focusing and the generic case) is not available and we have to prove it (Sec.\ref{2.4} and \ref{2.5}) building on \cite{OhSha14}. The proof of Holder continuity (actually a weaker statement will be proved since no metric structure is put on the compactification space here) is then not hard.

Some isolated arguments are collected in the appendix.
In App.\ref{appendix_A}, we prove the volume asymptotic needed for Theorem \ref{theorem_smoothed_count_quadric} using height zeta functions.
The equidistribution of lines in tori is discussed in App.\ref{appendix_B}. Finally in App.\ref{appendix_C}, we indicate how Theorem \ref{theorem_smoothed_count_quadric} might be upgraded to an authentic count using Eisenstein series.

\section{Integer points on ternary affine quadrics}\label{section_affine_quadrics}

\subsection{Preliminaries}

Recall $\bmG=\SL_2$ and $G= \SL_2(k_{\infty})$.
For a subset $E$ of $G$, we let $[E]$ be its image in $Y=G/\Gamma$.
For $s\in \R^{+}$ and $t\in \C^{\times}$, let
\[
     a_{s }:=
       \begin{bmatrix}
        \frac{ s+s^{-1}}{2} &     \frac{ s - s^{-1}}{2}  \\
           \frac{  s -  s^{-1}}{2}  &    \frac{  s +  s^{-1}}{2}
    \end{bmatrix},
    \quad 
    h_{ t }:=
       \begin{bmatrix}
          t &   0 \\
           0 &  t^{-1}
          \end{bmatrix}.
\]
Also,  
\begin{equation*}\label{equation_definition_A}
   A:= 
    \left\{ 
       a_{\vec{s} }:=
        \big( a_{s_1},...,a_{s_{l_1+l_2}}    \big)
     \midd \vec{s} = ( s_1,...,s_{l_1+l_2}) \in (\R^{+})^{l_1+l_2}
   \right\}.
\end{equation*}
For $\vec{t} \in k_{\infty} $, let $h_{\vec{t}}$ be as in Eq.(\ref{equation_definition_H}).
We will also need the stable and unstable horospherical subgroups attached to $h^{\Delta}_t = h_{(t,...,t)}$.
For  $\vec{r}=(r_1,...,r_{l_1+l_2}) \in k_{\infty} $, define
\[
  u_{\vec{r}} := u^+_{\vec{r}} := \left(
   \begin{bmatrix}
          1 & r_1 \\
          0 & 1
      \end{bmatrix},..., 
       \begin{bmatrix}
          1 & r_{l_1+l_2} \\
          0 & 1
      \end{bmatrix} \right),\quad
      u^-_{\vec{r}} := \left(
   \begin{bmatrix}
          1 & 0 \\
          r_1 & 1
      \end{bmatrix},..., 
       \begin{bmatrix}
          1 &  0 \\
          r_{l_1+l_2} & 1
      \end{bmatrix} \right)
\]
and
\[
   U:= U^+ := \left\{ u_{\vec{r}} \midd \vec{r} \in k_{\infty} \right\} ,\quad
   U^- := \big \{ u^-_{\vec{r}} \,\big \vert\, \vec{r} \in k_{\infty} \big \}.
\]

We will take the following form of effective mixing, a special case of \cite[Theorem 10.2]{BenOh12}, as a blackbox.
\begin{thm}\label{theorem_effective_mixing}
There exists some $\delta \in (0,0.5)$ such that for every $g\in G$ and every pair of compactly supported smooth functions $\varphi_1, \varphi_2$ on $Y$, one has
\[
     \normm{
      \int_Y \varphi_1(g .y) \varphi_2(y) \, \diff \rmm_Y(y)
      - \int_Y \varphi_1(y) \, \diff \rmm_Y(y) \int_Y \varphi_2(y) \, \diff \rmm_Y(y)
     }
     \leq \calS(\varphi_1)\calS(\varphi_2)( \norm{g.e_1}_{k_{\infty}}\norm{g.e_2}_{k_{\infty}}   )^{-\delta}.
\]
\end{thm}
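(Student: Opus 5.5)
The plan is to deduce the statement from a uniform spectral gap together with the standard decay of matrix coefficients for semisimple groups. This is the route behind \cite[Theorem 10.2]{BenOh12}; here we only need to match their norm on $G$ with the quantity $\norm{g.e_1}_{k_{\infty}}\norm{g.e_2}_{k_{\infty}}$.

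\textbf{Step 1: reduction to mean-zero functions.} Write $\varphi_i=\varphi_i^0+\int_Y\varphi_i\,\diff\rmm_Y$, so that $\varphi_i^0\in L^2_0(Y)$. The left-hand side is then exactly $\normm{\langle g.\varphi_1^0,\varphi_2^0\rangle_{L^2(Y)}}$. We choose the Sobolev norm $\calS$ of a suitable fixed order $d$, built from $L^2$ (or $L^\infty$) norms of derivatives along a basis of $\mathrm{Lie}(G)$. This norm dominates $\norm{\varphi_i^0}_{L^2}$ and the corresponding Sobolev norms of $\varphi_i^0$.

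\textbf{Step 2: spectral gap.} The key input is that $L^2_0(Y)$ has a uniform spectral gap: no nontrivial irreducible unitary representation of any factor $\SL_2(k_{\nu_i})$ weakly contained in $L^2_0(Y)$ lies near the trivial one. More precisely, every irreducible constituent is $p$-integrable for a fixed $p$. For arithmetic $\Gamma$ in $\SL_2$ over a number field, this follows from property $(\tau)$, namely Selberg's $3/16$ bound or Kim–Sarnak type bounds toward Ramanujan. One subtlety is a constituent that is trivial on some factors but not on others. This is excluded because $\Gamma$ is irreducible: by strong approximation, every nonconstant function is nontrivial on each simple factor.

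\textbf{Step 3: decay of matrix coefficients.} For $K_0$-finite vectors, the Harish-Chandra/Cowling–Haagerup–Howe bound gives
\[
\normm{\langle \pi(g)v,w\rangle}\ll (\dim K_0v\,\dim K_0w)^{1/2}\norm{v}\norm{w}\prod_i \Xi_{\nu_i}(g_i)^{1/m}
\]
for some integer $m\geq 1$, with $\Xi$ the Harish-Chandra function. Using the Cartan decomposition $g_i=k\,\diag(s_i,s_i^{-1})\,k'$, one has $\Xi_{\nu_i}(g_i)\ll s_i^{-1+\epsilon}$. To pass to arbitrary smooth vectors, we decompose into $K_0$-types and sum. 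The $K_0$-type decay costs finitely many derivatives in the $K_0$ directions, which is absorbed by choosing $d$ large in $\calS$.

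\textbf{Step 4: comparison with the norm product.} At a real place, $\norm{g_ie_1}\norm{g_ie_2}\leq\norm{g_i}_{\mathrm{op}}^2\asymp s_i^2$. At a complex place, the squared norms give $\ll s_i^4$. Therefore $T_g\ll\prod_{i\leq l_1}s_i^2\prod_{j}s_{l_1+j}^4$. Since every factor $s_i\geq 1$, the product of the $\Xi$-bounds is $\ll T_g^{-\delta}$ for some $\delta\in(0,0.5)$. The constant is uniform in $g$, and the implicit constant is absorbed into $\calS$ after rescaling.

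\textbf{Main obstacle.} The main difficulty is Step 2. We need a spectral gap uniform across all places, including the possibility of exceptional complementary series at a single factor. I would cite property $(\tau)$ for $\SL_2$ over number fields (Burger–Sarnak/Clozel) and then quote the integrability exponent rather than reprove it.
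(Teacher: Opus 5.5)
Your proposal is correct and is essentially the argument behind the paper's treatment. The paper gives no proof of this statement; it imports it as a black box from \cite[Theorem 10.2]{BenOh12}, whose underlying mechanism is what you describe: a strong spectral gap at every archimedean factor, Cowling--Haagerup--Howe decay for $K_0$-finite vectors, and a $K_0$-type summation to reach smooth vectors. Your Step 4 supplies the one translation the paper leaves implicit, namely $T_g\ll\prod_{i\le l_1}s_i^{2}\prod_{j\le l_2}s_{l_1+j}^{4}$ with all $s_i\ge 1$, so decay in the Cartan parameters dominates a small power of $T_g^{-1}$.
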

In the above the Sobolev norm $ \calS(\varphi) = \sum \norm{\calD(\varphi)}_{L^2}$ for finitely many differential operators. The reader is referred to \cite[Section 3]{GorOhMau08} for more details.


\subsection{From equidistribution to counting} \label{2.2}
In this subsection we prove Theorem \ref{theorem_smoothed_count_quadric} assuming Theorem  \ref{theorem_equidistribution_infinite_homogeneous_quadric}.

Fix some $x\in \bmX(k)$. The stabilizer of $x$ in $\bmG$ is a $k$-split subtorus. Since split sub-tori of $\bmG$ are pairwise $\bmG(k)$-conjugate, there exists some $q_x \in \bmG(k)$ such that $x= q_x.x_0$ where $x_0\in \bmX(k)$ has its stabilizer being equal to $\bmH$. Let $\Gamma^x:= q_x^{-1}\Gamma q_x$ and $Y_x:= G/\Gamma^x$.  Let $\rmm_{Y^x}$ be the $G$-invariant probability measure on $Y^x$. We have a $G$-equivariant isomorphism
\begin{equation*}
\begin{aligned}
      Y^x \cong Y ,\quad g\Gamma^x \mapsto gq_x^{-1}\Gamma.
\end{aligned}
\end{equation*}
Let $\varphi$ be as in the theorem and $\varphi_x$ be the pull-back of $\varphi$ along this isomorphism, which remains $K_0$-invariant.
Therefore,
\begin{equation*}
\begin{aligned}
     N_{\Gamma.x, T}(\varphi) 
     &= \int_{[g]\in Y^x}  \varphi_x([g]) \#
     \left\{
     y\in g\Gamma^x .x_0 \midd \norm{y}  \leq T 
     \right\}
     \, \diff\rmm_{Y^x}([g]).
\end{aligned}
\end{equation*}
By the unfolding and folding argument (see \cite[Section 5]{EskMcM93}), it can be rewritten as
\begin{equation*}
\begin{aligned}
     N_{\Gamma.x, T}(\varphi) 
     &= \int_{ gH \in B_T}
     \int_{H\Gamma^x/\Gamma^x}
     \varphi_x(g.y) \, \diff\rmm_{[H]}(y)
     \diff\rmm_{G/H}([g])
\end{aligned}
\end{equation*}
where $B_T:= \left\{ gH \in G/H  \midd \norm{g.x_0} \leq T \right\}$.
Let us note that $G.x_0 = \bmX(k_{\infty})$.
On applying Theorem \ref{theorem_equidistribution_infinite_homogeneous_quadric} to the right hand side, we find that (recall $T_g :=  \norm{g.e_1}_{k_{\infty}} \norm{g.e_2}_{k_{\infty}}$)
\begin{equation}\label{equation_smooth_count_quadratic}
\begin{aligned}
     &\;N_{\Gamma.x, T}(\varphi) 
     \\
     =&\;  \int_{ gH \in B_T} 
     \frac{1}{l} \log {T_g} \,\diff\rmm_{G/H}(gH)
     + c_2(\varphi) \rmm_{G/H}(B_T)
     +   \bmO \Big(  C \calS(\varphi) \int_{gH \in B_T} T_g^{-\delta}\diff \rmm_{G/H}(gH) \Big)
\end{aligned}
\end{equation}
where $c_2(\varphi)=  \scrD^+ (\varphi)+\scrD^{-}(\varphi)$. 
It remains to compute the volume asymptotics appearing in Eq.(\ref{equation_smooth_count_quadratic}).
We work this out in the appendix using height zeta functions.
In particular, as a consequence of Lemma \ref{lemma_height_zeta_meromorphic_continuation_quadratic} and Tauberian theorem (see e.g.  \cite[Theorem A.1]{Chamber-Loir_Tschinkel_2010_Igusa_integral}), we have for some $\delta \in (0,0.5)$, $c_1,c_2 \neq 0$ and $C_1>1$,
\begin{equation*}
\begin{aligned}
       \frac{1}{l}  \int_{ gH \in B_T} 
     \log {T_g} \,\diff\rmm_{G/H}([g]) &= c_1 T p_1(\log{T})  + O(T^{1-\delta}),\;\;
     \\
      \rmm_{G/H}(B_T)  &= c_2 T p_2(\log{T})  + O(T^{1-\delta})
      \\
      \int_{gH \in B_T} T_g^{-\delta}\diff \rmm_{G/H}(gH)  & \leq C_1 T^{1-\delta}
\end{aligned}
\end{equation*}
where $p_1$ (resp. $p_2$) is a monic polynomial of degree $l_1+l_2$ (resp. $l_1+l_2-1$).
And this is sufficient to conclude the proof of Theorem \ref{theorem_smoothed_count_quadric}.

\subsection{Holder continuity and effective equidistribution}\label{2.3}

Write  $e^{\vec{t}}:=  (e^{t_1},...,e^{t_{l_1+l_2}})$ for $\vec{t} = (t_1,...,t_{l_1+l_2}) \in k_{\infty}$.
For $\ep\in (0,1)$,
\[
   H[\ep]:= \left\{
   h_{e^{\vec{t}}} 
     \midd \normm{t_i} < \ep ,\;\forall \, i=1,...,l_1;\;
    \normm{\mathrm{Re}(t_{ j + l_1})  },  \normm{\mathrm{Im}(t_{  j + l_1})  } < \ep,\; \forall \, j=1,...,l_2
   \right\}.
\]
We choose $\ep_0>0$ small enough such that taking product induces an isomorphism 
\[
H[\ep_0] \times U^{\star}/U^{\star}\cap \Gamma \cong H[\ep_0]U^{\star} \Gamma/ \Gamma ,\quad \star = +,- .
\]
For every $z\in [H^{(1)}]$, we let $ \rmm_z$ be the normalized probability measure supported on $H[\ep_0].z$ defined by the restriction of $H$-invariant Haar measures.
And we let 
\[
\rmm_{\ep_0}:= \int_{  [H^{(1)}] } \rmm_z \, \diff \rmm_{[H^{(1)}]  }(z).
\]
 In particular, $\rmm_{\ep_0}$ is $H^{(1)}$-invariant.
One has a continuous map
\begin{equation*}
\begin{aligned}
     \Psi_{\ep_0}: A \times H^{\Delta} &\to  \Prob(K_0 \bs Y)
    \\
    (a,h) & \mapsto  \Psi_{\ep_0}(a,h):=(ah)_* \rmm_{\ep_0}.
\end{aligned}
\end{equation*}
We will prove effective equidistributions for this family of measures in Proposition \ref{proposition_equidistribution_focusing_affine_quadric},  \ref{proposition_effective_equidistribution_generic_quadric} and encode them into certain continuity property of functions on some compactification of $A\times H^{\Delta}$, which we now describe.
Let $\overline{A}:= A\cup\{\infty\}$ be the one-point compactification of $A$ and consider the embedding
\begin{equation*}
\begin{aligned}
   \iota: A \times H^{\Delta} &\to  \bmP^1(\R) \times \bmP^1(\R) \times \overline{A}
    \\
    (a,h) & \mapsto \iota(a,h):= \left(
    [1:\norm{ah. e_1}_{k_\infty}] , [1:\norm{ah.e_2}_{k_\infty}] , a
    \right).
\end{aligned}
\end{equation*}
Let $\overline{A\times H^{\Delta}}$ be the closure of $\iota(A\times H^{\Delta})$ in $\bmP^1(\R) \times \bmP^1(\R) \times \overline{A}$.
Consider the open subset parametrized by
\begin{equation*}
\begin{aligned}
    \iota_{\infty}:  [0,+\infty) \times [0,+\infty) \times \overline{A} &\to  \bmP^1(\R) \times \bmP^1(\R) \times \overline{A}
    \\
    (x_1,x_2,a) & \mapsto\left(
    [x_1:1 ] , [x_2:1 ] , a
    \right).
\end{aligned}
\end{equation*}
One can check that its image contains $\iota({A\times H^{\Delta}})$ and we 
let $\calN_0$ be the preimage of $\overline{\iota(A\times H^{\Delta})}$ in $ [0,+\infty) \times [0,+\infty) \times \overline{A}$, which is precisely the closure of 
\begin{equation*}
  \calN^{\circ}_0 := \left\{
  (x_1,x_2,a)\in \R^+\times \R^+ \times {A} \midd  x_1^{-1} x_2^{-1} =  \norm{a.e_1}_{k_{\infty}}  \norm{a.e_2}_{k_{\infty}} 
  \right \}.
\end{equation*}
Given $t\in \R^+$ and $a\in A$,
\begin{equation}\label{equation_definition_x(t)}
   x_1:=x_1(t,a):= \frac{1}{
       \norm{ah^{\Delta}_t.e_1}_{k_{\infty}}
   } = t^{-l} \frac{1}{ \norm{a.e_1}_{k_{\infty}}
   },\quad
   x_2:=x_2(t,a):= \frac{1}{
       \norm{  ah^{\Delta}_t.e_2}_{k_{\infty} }
   } = t^{l} \frac{1}{ \norm{a.e_2}_{k_{\infty}}
   }
\end{equation}
satisfies $(x_1,x_2,a)\in \calN_0^{\circ}$.
Conversely, given $ (x_1,x_2,a) \in \calN_0^{\circ}$, there exists a unique $t\in \R^+$ such that $x_1=x_1(t,a)$, $x_2= x_2(t,a)$.
Indeed it is defined by $(a,h^{\Delta}_t)= \iota^{-1}\circ  \iota_{\infty}(x_1,x_2,a)$. 
Given a  $K_0$-invariant function $\varphi\in C_c^{\infty}(Y)$, we let 
\begin{equation}\label{equation_f_quadratic}
      f^z_{\varphi}(x_1, x_2, a) := \int \varphi(ah_t^{\Delta}.y)\, \diff \rmm_z(y),\quad
      f_{\varphi}(x_1,x_2,a):= \int \varphi(ah_t^{\Delta}.y)\, \diff \rmm_{\ep_0}(y)
\end{equation}
the latter of which is nothing but the integration of $\varphi$ against the measure $ \Psi_{\ep_0}\circ \iota^{-1}\circ  \iota_{\infty}(x_1,x_2,a)$.

\begin{thm}\label{theorem_Holder_continuous_quadrics}
There exists $\delta\in(0,0.5)$, for every compact subset $\scrC \subset Y$, there exists $C >1$  such that the following holds.
 For every $K_0$-invariant and smooth  function $\varphi :Y \to \R$ supported on $\scrC$,  the function $f_{\varphi}: \calN^{\circ}_0 \to \R$ extends continuously to $\calN_0$ and satisfies
\begin{equation*}
\begin{aligned}
    \normm{
      f_{\varphi} (x_1,x_2,a) - f_{\varphi} ( 0, x_2, \infty) 
    }
    & \leq C \calS(\varphi ) x_1^{\delta}, \quad \text{ if } x_1 \leq x_2 ;
    \\
     \normm{
      f_{\varphi} (x_1,x_2,a) - f_{\varphi} ( x_1, 0, \infty) 
    }
    & \leq C \calS(\varphi ) x_2^{\delta}, \quad \text{ if } x_2 \leq x_1;
    \\
     \normm{
       f_{\varphi} ( 0, x_2, \infty) - f_{\varphi} ( 0, 0, \infty) 
    }
    & \leq C \calS(\varphi ) x_2^{\delta} ;
    \\
     \normm{
       f_{\varphi} ( x_1, 0, \infty) - f_{\varphi} ( 0, 0, \infty) 
    }
    & \leq C \calS(\varphi ) x_1^{\delta};
\end{aligned}
\end{equation*}
and that $x_2 \mapsto f_{\varphi} ( 0, x_2, \infty)$, $x_1\mapsto f_{\varphi} ( x_1, 0, \infty) $ are differentiable away from $0$.
The boundary value of $f_{\varphi}$ is defined in Eq.(\ref{equation_f_boundary_quadratic}) below.
\end{thm}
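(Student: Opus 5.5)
The plan is to read the four inequalities as effective equidistribution statements for the measures $(ah^{\Delta}_t)_*\rmm_{\ep_0}$, and to separate two regimes according to which of $x_1,x_2$ is smaller. By the symmetry $w=\begin{bmatrix}0&1\\-1&0\end{bmatrix}$, which swaps $e_1,e_2$ and normalizes $H$ while inverting $H^{\Delta}$, it suffices to treat $x_1\le x_2$.

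Write $x_1 = t^{-l}\norm{a.e_1}^{-1}$ and $x_2=t^{l}\norm{a.e_2}^{-1}$ as in Eq.(\ref{equation_definition_x(t)}). The point $(x_1,x_2,a)$ approaches the corner $\{x_1=0\}$ either because $a\to\infty$ or because $t\to\infty$. In both cases the relevant dynamics is a large element pushing the small $H[\ep_0]$-box along $U^-$ or $U^+$. Concretely, I will use a decomposition $a_{\vec s} = k\, u^{-}_{\vec r}\, h_{\vec\tau}$ (a $KU^-H$-type decomposition, valid in the $K_0$-invariant setting) with $\vec\tau$ and $\vec r$ explicit in $\vec s$. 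This rewrites
\[
\int \varphi(ah^{\Delta}_t.y)\,\diff\rmm_z(y)
\]
as the integral of $\varphi$ over a translate $u^-_{\vec r}h_{\vec\tau}h^{\Delta}_t H[\ep_0].z$.

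The key identification is the product $\norm{\cdot e_1}\norm{\cdot e_2}$. Since $x_1^{-1}x_2^{-1}=\norm{a e_1}\norm{ae_2}$, the size of $a$ is controlled by $1/(x_1x_2)$. Holding $x_2$ fixed and sending $x_1\to 0$ forces both $a\to\infty$ and a definite shape of $h_{\vec\tau}h^{\Delta}_t$ determined by $x_2$.

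I will then define the boundary value $f_\varphi(0,x_2,\infty)$ as the integral of $\varphi$ against the limiting measure. This should be a horospherical-type measure: the pushforward of Haar measure on a piece of $U^{-}$-orbit (or a hyperbola inside $H^{\Delta}U^-$) times $\rmm_{[H^{(1)}]}$, at height $x_2$. The first inequality then requires effective equidistribution of expanding translates of pieces of $H$-orbits toward these focused measures, with rate $x_1^{\delta}$. I plan to prove this with the Margulis thickening trick. First, thicken $H[\ep_0].z$ in the contracted direction ($U^+$ under the relevant element) using the product structure $H[\ep_0]\times U^{\star}/U^{\star}\cap\Gamma$. Second, apply Theorem \ref{theorem_effective_mixing} to the thickened function. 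Third, pay a Lipschitz/Sobolev cost of $\calS(\varphi)\eta$ for the thickening of size $\eta$. Fourth, optimize $\eta$ as a power of $x_1$. The mixing rate is governed exactly by $(\norm{g e_1}\norm{ge_2})^{-\delta}$, and the part of this not already used to fix $x_2$ gives the power $x_1^{\delta}$. Continuity of $f_\varphi$ on $\calN_0$ follows from these inequalities together with continuity of $\Psi_{\ep_0}$ in the interior.

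The third and fourth inequalities describe the boundary family $x_2\mapsto f_\varphi(0,x_2,\infty)$ converging as $x_2\to 0$ to $f_\varphi(0,0,\infty)=\int\varphi\,\diff\rmm_Y$. They amount to effective equidistribution of expanding horospherical pieces (equidistribution of large $U$-orbit pieces inside the torus bundle, cf.\ App.\ref{appendix_B}), which again follows from effective mixing via thickening, with rate $x_2^{\delta}$. Differentiability in $x_2$ away from $0$ holds because the boundary measure is an explicit smooth reparametrization, a translate by $h^{\Delta}_{s}$ with $s$ a smooth function of $x_2$, of a fixed measure. Differentiating under the integral gives a derivative bounded by $\calS(\varphi)$ times a locally bounded factor.

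The main obstacle is the first (focusing) estimate: pinning down the boundary measure precisely and making the thickening argument uniform when $a\to\infty$ and $t$ vary simultaneously, because the expanding element is then not in a fixed Weyl chamber direction. I would first handle it by conjugating into a standard form $u^-_{\vec r}h_{\vec\tau}$, and then treat the region where some coordinate $s_i$ stays bounded separately using compactness of $\supp\varphi$.
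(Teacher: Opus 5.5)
Your treatment of the first two inequalities has a genuine gap in the regime where one of $x_1,x_2$ is polynomially smaller than the other. This is the paper's focusing regime $x_2>x_1^{\delta}$ (resp.\ $x_1>x_2^{\delta}$). Thickening followed by Theorem~\ref{theorem_effective_mixing} can only show that $f_\varphi$ is close to $\int\varphi\,\diff\rmm_Y=f_\varphi(0,0,\infty)$. But with $x_2$ fixed and $x_1\to0$, the target $f_\varphi(0,x_2,\infty)$ is an average of $\varphi$ over pieces of closed $U^-$-horospheres at a height fixed by $x_2$. This differs from $\int\varphi\,\diff\rmm_Y$ even for $K_0$-invariant $\varphi$.

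Mechanically, Lemma~\ref{lemma_KHU_decomposition} gives $ah^{\Delta}_t=b\,h^{\Delta}_s u^-_{\vec r}h^{(1)}$, with $s$ pinned by $x_2$ and $\vec r\to\infty$ as $x_1\to0$. Discard $b$ by $K_0$-invariance. Then $\mathrm{Ad}(u^-_{\vec r})$ expands the $\mathfrak{u}^+$-directions quadratically and fixes only $\mathfrak{u}^-$. So any thickening of $H[\ep_0].z$ that survives the translation goes in $U^-$-directions and stays inside the closed, Haar-null orbit $HU^-\Gamma/\Gamma$, where mixing on $Y$ says nothing.

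Said differently, look at the hyperbolic part of $ah^{\Delta}_t$ in the paper's $KAU$ analysis, which is what one can thicken against. It satisfies $\norm{\vec g.e_1}_{k_\infty}\norm{\vec g.e_2}_{k_\infty}\asymp x_2^{-2}$ (end of the proof of Proposition~\ref{proposition_effective_equidistribution_generic_quadric}). The remaining factor $\asymp x_2/x_1$ of $\norm{g.e_1}_{k_\infty}\norm{g.e_2}_{k_\infty}$ is carried by a unipotent. That unipotent can be chopped into $\lfloor|r_j|\rfloor$ bounded pieces but not thickened against. So your step ``the part not already used to fix $x_2$ gives the power $x_1^{\delta}$'' is exactly what fails: mixing only yields a power of $\max\{x_1,x_2\}$, and only in the comparable regime.

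What is missing is a second, non-mixing equidistribution input inside the torus fibres of this closed orbit. The paper supplies it in Lemma~\ref{lemma_focusing_1}, working in the chart $H[\ep_0]\times U/(U\cap\Gamma)\cong[-\ep_0,\ep_0]^l\times\R^l/\Lambda$.
\begin{itemize}
\item In this chart, $(u_{\vec r})_*\rmm_z$ is the graph of $\vec t\mapsto[e^{-2\vec t}.\vec r_z]$, while $\rmm_z^U$ is the product measure.
\item After cutting the $\vec t$-box into small cells, each piece is a long segment (an annular sector at a complex place) in a coordinate direction $\vec e_i$ of $\R^l/\Lambda$, with $\Lambda$ commensurable with $\calO_k$.
\item Effective equidistribution of these pieces (Lemma~\ref{lemma_equidistribution_coordinate_lines_tori_number_fields}) rests on the Diophantine bound $|\sigma_i(\beta)|\ge\norm{\scrL_\beta}^{-(l-1)}$, which comes from $|\Nm_{k/\Q}(\beta)|\ge1$, together with an effective Kronecker theorem.
\end{itemize}
Combined with Lemma~\ref{lemma_KHU_decomposition}, this is Proposition~\ref{proposition_equidistribution_focusing_affine_quadric}. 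Your thickening argument is essentially Proposition~\ref{proposition_effective_equidistribution_generic_quadric}, and it is valid only for $x_1^{1/\delta}<x_2<x_1^{\delta}$.

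Hence the case split must be made at $x_2\approx x_1^{\delta}$ (resp.\ $x_1\approx x_2^{\delta}$), not at $x_1=x_2$. The third and fourth inequalities then follow from the triangle inequality at this interface. The first two inequalities in the comparable range follow from the generic estimate plus the third and fourth. If you instead prove the third and fourth directly by $U^+$-thickening of $h^{\Delta}_s$-translates of $\rmm^{U^-}_{\ep_0}$, note that Theorem~\ref{theorem_effective_mixing} as stated gives no decay for $g\in H$, since $\norm{h.e_1}_{k_\infty}\norm{h.e_2}_{k_\infty}=1$. You would need mixing with rate $\norm{g}^{-\delta}$.
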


\subsubsection{Boundary values of $f_{\varphi}$}

Rather than some abstract continuation, the value of  $ f_{\varphi}  $ at boundary is explicitly determined.

Recall that $\ep_0$ is small enough such that $H[\ep_0] \times U/U\cap \Gamma \cong H[\ep_0]U \Gamma/ \Gamma$.
For each $z \in [H^{(1)}]$, fix a lift $h_{\vec{t}_z}\in H^{(1)}$ of $z$. We may assume that all these lifts are contained in some compact subset. The map
\begin{equation*}
\begin{aligned}
     H[\ep_0] \times U/U\cap \Gamma &\to H[\ep_0]U.z = h_{\vec{t}_z}H[\ep_0]U\Gamma/\Gamma
     \\
     (a,y) & \mapsto h_{\vec{t}_z} a . y
\end{aligned}
\end{equation*}
is a homeomorphism for every $z\in [H^{(1)}]$. 
We normalize $\rmm_{H[\ep_0]}$, the restriction of Haar measures to $H[\ep_0]$ such that for every $z\in [H^{(1)}]$,  $\rmm_{H[\ep_0]}  \otimes \delta_{[\id]}  \cong  \rmm_z$ under this homeomorphism.
Define a $U$-invariant probability measure $ \rmm_z^U$ supported on $H[\ep_0]U.z $ such that
\[
   \rmm_{H[\ep_0]} \otimes \rmm_{[\rmU]} \cong \rmm_z^U
\]
where $\rmm_{[\rmU]}$ is the $U$-invariant probability measure supported on $U\Gamma/\Gamma$.
Another probability measure $\rmm_z^{U^-}$ supported on $ H[\ep_0]U^-.z$ is similarly defined. 
Let
\[
   \rmm_{\ep_0}^U := \int_{ [  H^{  (1)  } ]  } \rmm_z^U \, \diff \rmm_{[H^{(1)} ]}(z),   \quad
    \rmm_{\ep_0}^{U^-} := \int_{ [  H^{  (1)  } ]  } \rmm_z^{U^-} \, \diff \rmm_{[H^{(1)} ]}(z).
\]
Now,
\begin{equation}\label{equation_f_boundary_quadratic}
\begin{aligned}
    \begin{cases}
    \displaystyle
      f_{\varphi}(x_1, 0 ,\infty  ) = \int \varphi \left( h^{\Delta}_{ x_1^{-1/l} } . y \right) \,\diff\rmm_{\ep_0}^U(y) \\
      \displaystyle
      f_{\varphi}( 0 , x_2 ,\infty  )   = \int \varphi  \left( h^{\Delta}_{ x_2^{1/l} } . y  \right) \,\diff\rmm_{\ep_0}^{U^-}(y) \\
      \displaystyle
      f_{\varphi}( 0 , 0,\infty  )   = \int \varphi(y) \, \diff \rmm_{Y}(y)
    \end{cases}.
\end{aligned}
\end{equation}

Theorem \ref{theorem_Holder_continuous_quadrics} is a combination of Proposition \ref{proposition_equidistribution_focusing_affine_quadric}  and \ref{proposition_effective_equidistribution_generic_quadric}, to be discussed later. 
Before that, we explain how to deduce Theorem \ref{theorem_equidistribution_infinite_homogeneous_quadric} in the next subsection.

\subsubsection{Proof of Theorem \ref{theorem_equidistribution_infinite_homogeneous_quadric} assuming  Theorem \ref{theorem_Holder_continuous_quadrics}}
Thanks to the ``$KAH$-decomposition'' (see \cite[Proposition 7.1.3]{Schlichtkrull_symmetric_spaces}),
\[
      K_0 \times A \times H \to G
\]
is surjective. So it suffices to prove  Theorem \ref{theorem_Holder_continuous_quadrics} when $g=a \in A$. 
Fix some $K_0$-invariant $\varphi \in C_c^{\infty}(Y)$ and $a\in A$.
Recall that 
\begin{equation*}
\begin{aligned}
         H \Gamma/ \Gamma &\cong H^{\Delta} \times  H^{(1)} \Gamma / \Gamma 
         \\
         \rmm_{[H]} & \cong \frac{\diff t}{t} \otimes \rmm _{[H^{(1)}]}.
\end{aligned}
\end{equation*}
So (see Eq.(\ref{equation_definition_x(t)}) for the definition of $x_i(t,a )$)
\begin{equation*}
\begin{aligned}
    \int \varphi(a.z) \,\diff\rmm_{[H]}(z) &= \int \int \varphi (ah^{\Delta}_t.z) \, \frac{\diff t}{t}  \, \diff \rmm _{[H^{(1)}]}(z)
    \\
    &=
    \int_z \int_y \int_{t\in \R^+}  \varphi(a h^{\Delta}_t.y) \, \frac{\diff t}{t} \diff\rmm_{z}(y) \diff \rmm _{[H^{(1)}]}(z)
    \\
    &=
    \int_{t\in \R^+}
     f_{\varphi} (x_1(t,a),x_2(t,a),a) \, \frac{\diff t}{t}.
\end{aligned}
\end{equation*}
Break $ \int_{\R^+}$ into $\int_{1}^{\infty} + \int_{0}^1$
and note that
\[
   \frac{\diff x_2}{x_2} = \frac{\diff x_1}{x_1} = l \cdot  \frac{\diff t}{t}, \quad
   x_1(t,a) > x_2(t,a) \iff 0 <t <1.
\]
Let $C$ and $\delta$ be as in Theorem \ref{theorem_Holder_continuous_quadrics}.
 We have, 
\begin{equation*}
\begin{aligned}
     & \int_{t\in \R^+}
     f_{\varphi}(x_1(t,a),x_2(t,a),a) \, \frac{\diff t}{t}
     \\
     =&\,
     \int_1^{\infty} f_{\varphi}(0,x_2(t,a),\infty) + \bmO( C\calS(\varphi)x_1(t,a)^{\delta}) \,\frac{\diff t }{t}
     +
       \int_0^{1} f_{\varphi}(x_1(t,a),0, \infty) +  \bmO( C\calS(\varphi)x_2(t,a)^{\delta}) \,\frac{\diff t }{t}
       \\
       =&\,
       \frac{1}{l}
       \int_{   \norm{a.e_2}_{k_{\infty}}^{-1}
       }^{\infty}  f_{\varphi}(0,x_2,\infty) \,\frac{\diff x_2}{x_2}
       + \frac{1}{l}
       \int_{   \norm{a.e_1}_{k_{\infty}}^{-1}
       }^{\infty}  f_{\varphi}(x_1,0,\infty) \,\frac{\diff x_1}{x_1}
       + \bmO \Big(   \frac{2C\calS(\varphi)}{\delta l}
        \norm{a.e_1}_{k_{\infty}}^{-\delta}
       \Big)
\end{aligned}
\end{equation*} 
where we used $  \norm{a.e_1}_{k_{\infty}} = \norm{a.e_2}_{k_{\infty}} $.
For the first summand, we may continue as 
\begin{equation*}	
\begin{aligned}
    &
    \frac{1}{l} \int_{   \norm{a.e_2}_{k_{\infty}}^{-1}
    }^1 \frac{  f_{\varphi}(0,x_2,\infty)- f_{\varphi} (0,0,\infty)
    }{ x_2
    } \,\diff x_2
    + \frac{1}{l}
    f_{\varphi}(0,0,\infty) \int_{ \norm{a.e_2}_{k_{\infty}}^{-1}
    }^1 \frac{\diff x_2}{x_2}
    + \frac{1}{l} \int_1^{\infty} f_{\varphi}(0,x_2,\infty)\,\frac{\diff x_2}{x_2} 
    \\
    = \,&
    \frac{1}{l} \int_0^1 \frac{
     f_{\varphi}(0,x_2,\infty)- f_{\varphi} (0,0,\infty)
    }{  x_2 } \diff x_2
    +    \frac{1}{l} \log\left(
      \norm{a.e_2}_{k_{\infty}}
    \right) f_{\varphi}(0,0,\infty) 
     +
    \frac{1}{l} \int_1^{\infty} \frac{
     f_{\varphi}(0,x_2,\infty)
    }{  x_2 } \diff x_2 
    \\
    \, &
    + \bmO \Big(   \frac{C\calS(\varphi)}{\delta l} \norm{a.e_2}_{k_{\infty}}^{-\delta} \Big)
    \\
    =\,&
     \frac{1}{l} \log\left(
      \norm{a.e_2}_{k_{\infty}}
    \right) f_{\varphi}(0,0,\infty) +
    \frac{1}{l}   \int_0^{\infty} \frac{\partial  f_{\varphi}}{\partial x_2} 
    (0,x_2,\infty) (- \log(x_2)) \diff x_2  
      +  \bmO \Big(   \frac{C\calS(\varphi)}{\delta l} \norm{a.e_2}_{k_{\infty}}^{-\delta} \Big)
\end{aligned}
\end{equation*}
where in the last step we applied integration by part.
One has similar results for the second summand:
\begin{equation*}
\begin{aligned}
      &
      \frac{1}{l}
       \int_{   \norm{a.e_1}_{k_{\infty}}^{-1}
       }^{\infty}  f_{\varphi}(x_1,0,\infty) \,\frac{\diff x_1}{x_1}
       \\
       = \, &
     \frac{1}{l} \log\left(
      \norm{a.e_1}_{k_{\infty}}
    \right) f_{\varphi}(0,0,\infty) +
    \frac{1}{l}   \int_0^{\infty} \frac{\partial   f_{\varphi}}{\partial x_1} 
   (x_1,0,\infty) (- \log(x_1)) \diff x_1 
      +  \bmO \Big(   \frac{C\calS(\varphi)}{\delta l} \norm{a.e_1}_{k_{\infty}}^{-\delta} \Big).
\end{aligned}
\end{equation*}

 Combining efforts so far and recalling Eq.(\ref{equation_f_boundary_quadratic}), we obtain
\begin{equation}\label{equation_orbital_integral_full}
\begin{aligned}
    & \int \varphi(a.z) \,\diff\rmm_{[H]}(z)
    \\
      = &\,
     \frac{1}{l} \log\left(
     \norm{a.e_1}_{k_{\infty}}
      \norm{a.e_2}_{k_{\infty}}
    \right) \int \varphi(y) \, \diff \rmm_{Y}(y)
    \\
     &
     + \frac{1}{l}  \int_{0}^{\infty} 
     \frac{\diff}{\diff x} \Bigg\vert_{x=x_1}
      \left(  
     \int \varphi \Big( 
      h^{\Delta}_{ {x^{-1/l}}  } .z 
     \Big) \diff \rmm_{ \ep_0 }^U (z)
     \right)
     (-\log x_1) \, \diff x_1
    \\
    & +
     \frac{1}{l} \int_{0}^{\infty} \frac{\diff}{\diff x} \Bigg \vert_{x=x_2}
     \left(  \int \varphi \Big( 
      h^{\Delta}_{  {x^{1/l}}   } .z 
    \Big) \diff \rmm_{ \ep_0 }^{U^-} (z)  
    \right)
    (-\log x_2) \, \diff x_2
    +\bmO \Big(   \frac{4C\calS(\varphi)}{\delta l}  \norm{a.e_1}_{k_{\infty}}^{-\delta} \Big).
\end{aligned}
\end{equation}
The dependence on $\ep_0$ can be eliminated as follows.
By assumption, there exists a probability measure $\nu$ (depending on $\ep_0$) on $\R^+$ such that 
\[
      \rmm_{\ep_0}  = 
       \int (h^{\Delta}_{\theta})_* \rmm_{[H^{(1)}]}  \,\diff\nu(\theta),
\]
which implies that
\[
    \rmm_{ \ep_0 }^{U^{\star}}   = 
       \int (h^{\Delta}_{\theta})_* \rmm^{U^{\star}}_{[H^{(1)}]}  \,\diff\nu(\theta),\; \text{ for } \star =+,-,
\]
where $ \rmm^{U^{\star}}_{[H^{(1)}]} $ denotes the invariant probability measure supported on $H^{(1)} U^{\star}\Gamma/\Gamma  $.
On the other hand for each fixed $\theta \in \R^+$, upon change of variable $x_2 \to x_2 '=x_2 \theta^l$, 
\begin{equation*}
\begin{aligned}
       &
     \frac{1}{l} \int_{0}^{\infty} \frac{\diff}{\diff x} \Bigg \vert_{x=x_2}
     \left(  \int \varphi \Big( 
      h^{\Delta}_{  {x^{1/l}}   }h^{\Delta}_{\theta} .z 
    \Big) \diff \rmm_{[H^{(1)}] }^{U^-} (z)  
    \right)
    (-\log x_2) \, \diff x_2
    \\
    = & \,
     \frac{1}{l} \int_{0}^{\infty} \frac{\diff}{\diff x} \Bigg \vert_{x=x'_2}
     \left(  \int \varphi \Big( 
      h^{\Delta}_{  {x^{1/l}}   } .z 
    \Big) \diff \rmm_{[H^{(1)}] }^{U^-} (z)  
    \right)
    (-\log x_2' + \log \theta^l)  \, \diff x'_2
    \\
    =&\,
     \frac{1}{l} \int_{0}^{\infty} \frac{\diff}{\diff x} \Bigg \vert_{x=x'_2}
     \left(  \int \varphi \Big( 
      h^{\Delta}_{  {x^{1/l}}   } .z 
    \Big) \diff \rmm_{[H^{(1)}] }^{U^-} (z)  
    \right)
    (-\log x_2' )  \, \diff x'_2
    -
    \log \theta \cdot \int \varphi(y) \, \diff\rmm_Y(y).
\end{aligned}
\end{equation*}
Similarly, via $x_1 \to x_1 '=x_1 \theta^{-l}$,
\begin{equation*}
\begin{aligned}
       &
     \frac{1}{l} \int_{0}^{\infty} \frac{\diff}{\diff x} \Bigg \vert_{x=x_1}
     \left(  \int \varphi \Big( 
      h^{\Delta}_{  {x^{-1/l}}   }h^{\Delta}_{\theta} .z 
    \Big) \diff \rmm_{[H^{(1)}] }^{U^+} (z)  
    \right)
    (-\log x_1) \, \diff x_1
    \\
    =&\,
     \frac{1}{l} \int_{0}^{\infty} \frac{\diff}{\diff x} \Bigg \vert_{x=x'_1}
     \left(  \int \varphi \Big( 
      h^{\Delta}_{  {x^{-1/l}}   } .z 
    \Big) \diff \rmm_{[H^{(1)}] }^{U^+} (z)  
    \right)
    (-\log x_1' )  \, \diff x'_1
    +
    \log \theta \cdot \int \varphi(y) \, \diff\rmm_Y(y).
\end{aligned}
\end{equation*}
When they add together, the second terms get cancelled.
Hence if we define
\begin{equation}\label{equation_distributions_+-}
\begin{aligned}
     \scrD^+ (\varphi)  &:=\frac{1}{l}  \int_{0}^{\infty} 
     \frac{\diff}{\diff x} \Bigg\vert_{x=x'_1}
      \left(  
     \int \varphi \Big( 
      h^{\Delta}_{ {x^{-1/l}}  } .z 
     \Big) \diff \rmm_{[H^{(1)}] }^U (z)
     \right)
     (-\log x'_1) \, \diff x'_1,
    \\
     \scrD^-(\varphi) &:=
       \frac{1}{l} \int_{0}^{\infty} \frac{\diff}{\diff x} \Bigg \vert_{x=x'_2}
     \left(  \int \varphi \Big( 
      h^{\Delta}_{  {x^{1/l}}   } .z 
    \Big) \diff \rmm_{[H^{(1)}] }^{U^-} (z)  
    \right)
    (-\log x_2') \, \diff x_2',
\end{aligned}
\end{equation}
then Eq.(\ref{equation_orbital_integral_full}) can be written as
\begin{equation*}
\begin{aligned}
      \int \varphi(a.z) \,\diff\rmm_{[H]}(z)
     = \, 
     \frac{1}{l} \log\left(
    T_g
    \right) \int \varphi(y) \, \diff \rmm_{Y}(y)
     +   \scrD^+(\varphi) + \scrD^-(\varphi)
    +\bmO \Big(   \frac{4C\calS(\varphi)}{\delta l} T_g^{-0.5\delta} \Big).
\end{aligned}
\end{equation*}
The proof is now complete.

\subsection{Effective equidistribution: the focusing case}\label{2.4}

Our goal in this subsection is to show
\begin{prop}\label{proposition_equidistribution_focusing_affine_quadric}
There exist $\delta,\, \delta' \in (0,0.5)$ such that for every compact subset $\scrC$ of $Y$, there exists $C>1$ depending on $\scrC$ such that for every $\varphi \in C_c^{\infty}(Y)$ supported on $\scrC$, $z\in [H^{(1)}]$ and $(x_1,x_2,a)\in \calN_0^{\circ}$,
\begin{equation}
\begin{aligned}
      \normm{  f_{\varphi} (x_1, x_2 , a ) - f_{\varphi} (x_1,0, \infty) } &\leq C \Lip( \varphi ) x_2^{\delta'} ,\quad
      \text{ if } x_1 > x_2^{\delta};
      \\
      \normm{  f_{\varphi} (x_1, x_2 , a ) - f_{\varphi} (0 ,x_2, \infty) } &\leq C \Lip( \varphi ) x_1^{\delta'} ,\quad
      \text{ if } x_2 > x_1^{\delta}. 
\end{aligned}
\end{equation}
\end{prop}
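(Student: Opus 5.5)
The plan is to compute the translated measure $(ah^{\Delta}_t)_*\rmm_{\ep_0}$ explicitly, using a Bruhat-type factorization of $a_{\vec s}$ and the fact that $U\Gamma/\Gamma$ and $U^-\Gamma/\Gamma$ are compact tori. I treat only the first inequality, where $x_1>x_2^{\delta}$ so that $x_2$ is the small quantity. The second inequality follows by the symmetric argument, exchanging the roles of $U^+$ and $U^-$ and of $e_1$ and $e_2$.

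\textbf{Step 1: factorization.} At each place write $c_i=\frac{s_i+s_i^{-1}}{2}$ and $r_i=\frac{s_i-s_i^{-1}}{s_i+s_i^{-1}}\in(-1,1)$. Then the UDL factorization
\[
a_{s_i}=\begin{bmatrix}1&r_i\\0&1\end{bmatrix}\begin{bmatrix}c_i^{-1}&0\\0&c_i\end{bmatrix}\begin{bmatrix}1&0\\r_i&1\end{bmatrix}
\]
holds, so that
\[
a_{\vec s}h^{\Delta}_t=u^+_{\vec r}\,h_{\vec\tau}\,u^-_{\vec r\,t^2},\qquad \vec\tau=(t/c_i)_i .
\]
Next I express $x_1$ and $x_2$ in these coordinates. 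Up to constants coming from the Euclidean norms (chosen so that $\norm{a_{\vec s}.e_j}_{k_\infty}\asymp\prod_i c_i^{[k_{\nu_i}:\R]}$), we have $x_1\asymp t^{-l}\prod c_i^{-[k_{\nu_i}:\R]}$ and $x_2\asymp t^{l}\prod c_i^{-[k_{\nu_i}:\R]}$. The hypothesis $x_1>x_2^{\delta}$ together with $x_2$ small forces $\prod c_i$ to be large. It also forces $t^2$ to be small compared with every $c_i$; this per-place statement has to be checked from $c_i\ge 1$, and I will choose $\delta$ small to make it work.

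\textbf{Step 2: remove the $U^-$ factor.} The factor $u^-_{\vec r t^2}$ is within distance a power of $x_2$ of the identity only after conjugating by $h_{\vec\tau}$, so it must be handled carefully. Writing $h_{\vec\tau}u^-_{\vec r t^2}=u^-_{\vec r t^2\tau^{-2}}h_{\vec\tau}$ is the wrong direction. Instead I simply note that $u^-_{\vec rt^2}$ acts on the fixed measure $\rmm_{\ep_0}$ before the other factors. Its displacement is $O(t^2)$, and since the measure is pushed by the remaining factors, the error is estimated via $\Lip(\varphi)$ composed with a bounded distortion. The cleaner option is to exploit the $H[\ep_0]$-thickening: rewrite $u^-_{\vec r t^2}H[\ep_0]$ as $H[\ep_0]$ up to an $O(t^2)$ perturbation inside a fixed neighbourhood.

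\textbf{Step 3: equidistribution along $U$.} Commute $u^+_{\vec r}$ past $h_{\vec\tau}$ to get $u^+_{\vec r}h_{\vec\tau}=h_{\vec\tau}u^+_{\vec r\tau^{-2}}$, where $R_i:=r_i c_i^2/t^2$ is large. For each $z$, the measure $u^+_{\vec R}\rmm_z$ is the image of $H[\ep_0]$ under $h\mapsto u_{\vec R}h.z=h\,u_{\vec R h^{-2}}.z$. This is an $H[\ep_0]$-thickened family of points on the compact torus $U\Gamma/\Gamma\cong k_\infty/\Lambda$, traced out by the curve $h\mapsto \vec R h^{-2}$, which has length about $R_i\ep_0$ in each coordinate. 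I will invoke the equidistribution of such curves in tori (App.\ref{appendix_B}): this requires a polynomial Fourier decay bound for nonzero characters of $\Lambda^*$ integrated along $h\mapsto \vec R h^{-2}$, which I get by nonstationary phase, giving decay $\min_i|R_i|^{-1}$ times the norm of the character. Summing over characters with a Sobolev (or Lipschitz plus smoothing) cutoff gives
\[
u^+_{\vec R}\rmm_z\approx\rmm_z^U,
\]
with error $\ll\Lip(\varphi)\min_i|R_i|^{-\delta''}$. Some coordinates $r_i$ may be near $0$ (places with $s_i\approx1$). At those places the $U$-direction is not stretched, and I plan to handle this by absorbing such places into the $H[\ep_0]$-averaging, using that $U\cap\Gamma$ is a lattice so only the global size matters. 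This is the step I expect to be the main obstacle: the effective equidistribution has to hold uniformly in the degenerate directions, and the resulting exponent has to be expressed as a power of $x_2$ using $x_1>x_2^{\delta}$.

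\textbf{Step 4: identify the limit.} After Steps 2 and 3 the measure is $h_{\vec\tau *}\rmm^U_{\ep_0}$, up to the accumulated errors. Write $h_{\vec\tau}=h^{\Delta}_{\theta}\,h'$ with $h'\in H^{(1)}$. The factor $h'$ preserves $\rmm^U_{\ep_0}$ modulo its normalizing action on $U$, which preserves $\rmm_{[U]}$, and the $K_0$-invariance of $\varphi$ absorbs the unitary parts of $\vec\tau$. It remains to check that $\theta=x_1^{-1/l}$ up to the same power-small error, which follows by comparing $\mathrm{Nm}(\vec\tau)=t^{l}\prod c_i^{-[k_{\nu_i}:\R]}$ with the formula for $x_1$. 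Comparing with Eq.(\ref{equation_f_boundary_quadratic}) then gives $f_\varphi(x_1,0,\infty)$. Finally, collect the errors from Steps 2 and 3; all of them are powers of $x_2$ once $\delta$ is small, which gives the bound with some $\delta'$.
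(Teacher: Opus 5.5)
There is a genuine gap, and it is already in Steps 1--2. Your identity $a_{\vec s}h^{\Delta}_t=u^+_{\vec r}\,h_{\vec\tau}\,u^-_{\vec r t^2}$ is correct. The problem is that the factor you discard acts \emph{first}, and $h_{\vec\tau}$ (with $\tau_i=t/c_i$ small) expands $U^-$: $h_{\vec\tau}\,u^-_{\vec r t^2}\,h_{\vec\tau}^{-1}=u^-_{\vec r'}$ with $r'_i=r_ic_i^2$, which is unbounded. So the ``bounded distortion'' of Step 2 is really a Lipschitz factor of order $\max_i c_i^2t^{-2}$, and the $O(t^2)$ displacement becomes $O(\max_i c_i^2)$. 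The $H[\ep_0]$-thickening cannot absorb this, because $U^-$ is transverse to $H$. Worse, the discarded factor carries all of the geometry. On one hand $\norm{a_{\vec s}h^{\Delta}_t.e_1}_{k_\infty}=x_1^{-1}$, which is bounded below. On the other hand, by your own formulas, $\norm{u^+_{\vec r}h_{\vec\tau}.e_1}_{k_\infty}\asymp\mathrm{Nm}(\vec\tau)=t^l\prod_i c_i^{-[k_{\nu_i}:\R]}\asymp x_2$. So after Step 2 the measure sits deep in the cusp. In Step 4 you would then get $\theta^l\asymp x_2$, i.e.\ $\theta^l x_1\asymp x_1x_2\to0$, instead of $\theta=x_1^{-1/l}$; this is not a power-small error. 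As planned, the argument compares $f_\varphi(x_1,x_2,a)$ with a quantity tending to $0$, not with $f_\varphi(x_1,0,\infty)$.

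The missing idea is the order of the factors. The possibly large unipotent element must lie in $U$ and act first on $\rmm_{\ep_0}$. The diagonal part must have norm exactly $x_1^{-1}$. Whatever remains on the far left must be killed by the $K_0$-invariance of $\varphi$; this is where that hypothesis is genuinely needed, since $\vec\tau$ has no unitary part to absorb. This is what the paper does:
\begin{enumerate}
\item By Lemma~\ref{lemma_KHU_decomposition}, write $ah^{\Delta}_t=b\,h^{\Delta}_s\,u_{\vec r}\,h^{(1)}$ with $b\in K_0$ and $h^{(1)}\in H^{(1)}$, so that $x_1=s^{-l}$.
\item Drop $b$ by $K_0$-invariance, and drop $h^{(1)}$ by the $H^{(1)}$-invariance of $\rmm_{\ep_0}$ and $\rmm^U_{\ep_0}$.
\item Apply Lemma~\ref{lemma_focusing_1} to $(h^{\Delta}_s)^{-1}.\varphi$, whose Lipschitz constant is at most $\max\{s,s^{-1}\}^{\kappa_1}\Lip(\varphi)$.
\item Use $x_1\le M$ together with $x_1>x_2^{\delta}$ to bound $\max\{s,s^{-1}\}$ by a constant times a small power of $\norm{u_{\vec r}.e_2}_{k_\infty}$.
\end{enumerate}
Your worry in Step 3 about degenerate places is legitimate, but the remedy you suggest is not enough. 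That $U\cap\Gamma$ is a lattice does not stop a coordinate line from being closed in a general torus. The paper instead takes the place where $\normm{r_{z,i}}$ is maximal. It then uses effective equidistribution of that single coordinate line in $\R^l/\Lambda$ (averaged over a short arc of directions at a complex place), which is Lemma~\ref{lemma_equidistribution_coordinate_lines_tori_number_fields}. That lemma rests on the Diophantine bound $\normm{\sigma_j(\beta)}\ge\norm{\scrL_\beta}^{-(l-1)}$ for $0\ne\beta\in\calO_k$, a consequence of $\normm{\mathrm{Nm}_{k/\Q}(\beta)}\ge1$.
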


We will first work out a special case (Lemma \ref{lemma_focusing_1}) by using a local coordinate system to convert the integrals into averages over lines in a torus, and applying effective equidistribution for such lines (Lemma \ref{lemma_equidistribution_coordinate_lines_tori_number_fields}). Then the general case follows by ``KHU'' decomposition (Lemma \ref{lemma_KHU_decomposition}).

\subsubsection{Convention}

For two vectors $\vec{v}=(v_1,...,v_k)$ and $\vec{w}=(w_1,....,w_k)$, we adopt the convention that 
\[
 \vec{v}.\vec{w}:= (v_1w_1,...,v_kw_k),\;
 e^{\vec{v}} := (e^{v_1} ,..., e^{v_k}),\; \diff \vec{v}:= \diff v_1 ...\diff v_k.
\]
And for $\vec{t} \in k_{\infty}^{\times}$, we let $\vec{t}^{-2}:= (t_1^{-2},...,t_{l_1+l_2}^{-2})$.

The following lemma can be viewed as a special case of the above proposition.
\begin{lem}\label{lemma_focusing_1}
  There exist $C>1$ and $\delta \in(0,0.5)$ such that for every $\varphi\in C_c^{\infty}(Y)$, $\vec{r}\in k_{\infty}$ and $z\in [H^{(1)}]$, we have
  \[
     \left\vert    \int \varphi(u_{\vec{r} }.y) \, \diff\rmm_{z}(y) -  \int \varphi (y) \,\diff \rmm_z^U(y)
     \right\vert
     \leq C \Lip(\varphi) \norm{ u_{\vec{r} } .e_2 } ^{-\delta}.
  \]
  Similarly,
  \[
     \left\vert    \int \varphi(u^-_{\vec{r} }.y) \, \diff\rmm_{z}(y) -  \int \varphi (y) \,\diff \rmm_z^{U^-}(y)
     \right\vert
     \leq C \Lip(\varphi) \norm{ u^{-}_{\vec{r} } .e_1 } ^{-\delta}.
  \]
\end{lem}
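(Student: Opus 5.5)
The plan is to pass to the natural coordinates in which $\rmm_z$ becomes a product measure on $H[\ep_0]\times U/U\cap\Gamma$. Then translating by $u_{\vec r}$ becomes a family of translates along the compact torus $U/U\cap\Gamma$, and the lemma reduces to an effective equidistribution statement on that torus.

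\textbf{Step 1: change of coordinates.} Fix the lift $h_{\vec t_z}\in H^{(1)}$ of $z$, taken from a fixed compact set. For $h_{e^{\vec s}}\in H[\ep_0]$, conjugation gives
\[
u_{\vec r}\, h_{\vec t_z} h_{e^{\vec s}} = h_{\vec t_z} h_{e^{\vec s}}\, u_{\vec r.\vec t_z^{-2}.e^{-2\vec s}}.
\]
Through the homeomorphism $(a,y)\mapsto h_{\vec t_z}a.y$, the measure $(u_{\vec r})_*\rmm_z$ is the image of the measure
\[
\rmm_{H[\ep_0]}\Big|_{h_{e^{\vec s}}} \otimes \delta_{[u_{\vec r.\vec t_z^{-2}.e^{-2\vec s}}]},
\]
while $\rmm^U_z$ is the image of $\rmm_{H[\ep_0]}\otimes\rmm_{[U]}$. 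Since $\Lip(\varphi)$ controls the pulled-back function up to a constant that is uniform over the compact set of lifts, it suffices to show the following. For a Lipschitz function $F(\vec s,\theta)$ on $H[\ep_0]\times \mathbb T$, where $\mathbb T:=k_\infty/\Lambda$ and $\Lambda$ is the lattice with $U\cap\Gamma=u_\Lambda$ (commensurable with $\calO_k$), we need
\[
\int_{H[\ep_0]} F\big(\vec s,\vec r.\vec t_z^{-2}.e^{-2\vec s}\big)\,\diff\vec s \;-\;\int\!\!\int F\,\diff\vec s\,\diff\theta = O\big(\Lip(F)\,\norm{u_{\vec r}.e_2}^{-\delta}\big).
\]
The $U^-$ case is identical after transposing, with $e_1$ replacing $e_2$.

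\textbf{Step 2: Fourier analysis on $\mathbb T$.} Mollify $F$ in $\theta$ at scale $R^{-1}$. This costs $O(\Lip(F)R^{-1})$ and leaves only characters $\psi_\gamma$, $\gamma\in\Lambda^*\setminus\{0\}$, with $\norm{\gamma}\le R^{1+\eta}$, each with bounded coefficient; a partition of unity in $\vec s$ handles the $\vec s$-dependence. For each such $\gamma$ we then bound the oscillatory integral
\[
\int_{H[\ep_0]} \rho(\vec s)\, \exp\Big(2\pi i\, \mathrm{Tr}\big(\gamma\, r\, t_z^{-2} e^{-2s}\big)\Big)\,\diff\vec s .
\]
This factors over the places $\nu_i$. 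At a real place the phase in $s_i$ has derivative of size $\asymp|\gamma_i r_i|$. At a complex place the phase $\mathrm{Re}(\gamma_i r_i t_i^{-2}e^{-2(a+ib)})$ in the two variables $(a,b)$ has gradient of size $\asymp|\gamma_i r_i|$, with the factor $e^{-2(a+ib)}$ bounded on the box. Integrating by parts in each variable gives the bound
\[
\prod_i \min\big(1,(|\gamma_i|\,|r_i|)^{-1}\big),
\]
with the appropriate power at complex places.

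\textbf{Step 3: the Diophantine input, which I expect to be the main obstacle.} The quantity $\norm{u_{\vec r}.e_2}_{k_\infty}=\prod_i(1+|r_i|^2)^{1/2}$ (squared at complex places) can be large even when some $r_i$ are tiny. At those places there is no oscillation, so the decay must come from the other places. This is exactly where the arithmetic of $\Lambda$ enters: every nonzero $\gamma\in\Lambda^*$ is, up to a fixed scaling, a nonzero element of a fractional ideal of $k$. Hence $\prod_i|\gamma_i|\gg 1$, and $\gamma$ has no vanishing coordinate, so no coordinate subtorus is rational. This is the content of Lemma \ref{lemma_equidistribution_coordinate_lines_tori_number_fields}, which I would invoke directly.

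To carry out the estimate by hand, I would bound each factor by $\min(1,|\gamma_i r_i|^{-1})\le (1+|r_i|)^{-\kappa}|\gamma_i|^{-\kappa}\max(1,|\gamma_i|)^{\kappa}$ for a small $\kappa$. Taking the product and using $\prod|\gamma_i|\gg1$ together with $|\gamma_i|\le R^{1+\eta}$ gives a bound $\ll R^{C\kappa}\norm{u_{\vec r}.e_2}^{-\kappa'}$ per character. There are $\ll R^{l(1+\eta)}$ characters. Choosing $R$ a small power of $\norm{u_{\vec r}.e_2}$ then balances the smoothing error $R^{-1}$ against the character sum, and yields the claimed exponent $\delta>0$. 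The delicate point is checking that the lower bound $\prod|\gamma_i|\gg1$, rather than a bound on each coordinate separately, is enough to absorb the places where $|r_i|$ is small. This is what the appendix lemma is designed to do.
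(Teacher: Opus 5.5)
Your proposal is correct in outline, but after Step 1 (which is exactly the paper's model-space reduction) it takes a different route from the paper.

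\textbf{The paper's route.} The paper never Fourier-expands on $k_\infty/\Lambda$.
\begin{itemize}
\item It singles out the place where $|r_{z,i}|$ is largest and freezes the remaining $\vec t$-coordinates.
\item It chops the $t_{l_1+l_2}$-box into $M^2$ small pieces. On each piece, after the substitution $s=e^{-2t^R}$, the $H$-coordinate of $\psi$ is essentially constant and the torus coordinate sweeps a segment (or thin sector) of a coordinate line of length $\asymp T/M$.
\item It applies Lemma \ref{lemma_equidistribution_coordinate_lines_tori_number_fields} to each piece and balances with $M=\lfloor T^{0.11\delta_1}\rfloor$.
\end{itemize}
That lemma is proved in Appendix B from the Green--Tao effective Kronecker theorem, plus the bound $|\sigma_j(\beta)|\ge\norm{\scrL_\beta}^{-(l-1)}$ coming from $\Nm(\beta)\ge 1$.

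\textbf{How your route compares.} You replace the linearization-plus-Kronecker step with mollification on the torus and integration by parts in the $H[\ep_0]$-variables, using the same Diophantine input. This buys you a self-contained argument that handles the exponential curve $\vec s\mapsto\vec r.\vec t_z^{-2}.e^{-2\vec s}$ directly. The paper instead gets to reuse a modular line-equidistribution lemma. Note that Lemma \ref{lemma_equidistribution_coordinate_lines_tori_number_fields} cannot be invoked ``directly'' on your curves, since it concerns linear coordinate lines. Using it requires precisely the paper's subdivision.

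\textbf{Where your sketch is weakest, and a simpler fix.} The product-over-all-places bound is more than you need. It is also the shakiest part of the sketch, because the amplitude $\hat F(\vec s,\gamma)$ is only Lipschitz and does not factor over places. A tensor-product partition of unity at scale $\ep$ repairs this, at a cost of $\ep^{-1}$ per place.

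It is cleaner to integrate by parts only at the place $i_0$ with $|r_{i_0}|$ maximal.
\begin{itemize}
\item Since $|\gamma_{i_0}|\gg\norm{\gamma}^{-(l-1)}$, each character contributes $\ll\Lip(F)\,R^{(1+\eta)(l-1)}/|r_{i_0}|$.
\item Since $|r_{i_0}|\gg\norm{u_{\vec r}.e_2}_{k_\infty}^{1/l}$, places with tiny $r_i$ cause no difficulty, so the ``delicate point'' you flag does not actually arise.
\end{itemize}

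\textbf{Minor points.} Mollification gives rapid decay of the Fourier coefficients rather than exact truncation. The bound $|\hat F(\vec s,\gamma)|\ll\Lip(F)$ for $\gamma\neq0$ uses that the torus has bounded diameter. Both are routine.
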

To prove this lemma we will work with a model space and, to avoid redundancy, only the first part will be proved. 

\subsubsection{Model space}


Recall that
\begin{equation}
\begin{aligned}
     H[\ep_0] \times U/U\cap \Gamma &\to H[\ep_0]U.z = h_{\vec{t}_z}H[\ep_0]U\Gamma/\Gamma
     \\
     (a,y) & \mapsto h_{\vec{t}_z} a . y
\end{aligned}
\end{equation}
is a homeomorphism for every $z\in [H^{(1)}]$. And $\rmm_{H[\ep_0]} \otimes \rmm_{[\rmU]} \cong \rmm_z^U$ via this homeomorphism.

Let $\Lambda $ be the preimage of $U\cap \Gamma $ under the homeomorphism $\R^l \cong k_{\infty} \cong U$ given by $\vec{r}\mapsto u_{\vec{r}}$, then 
\begin{equation*}
\begin{aligned}
        [-\ep_0 , \ep_0]^l \times \R^l/ \Lambda &\cong H[\ep_0] \times U/U\cap \Gamma
        \\
        (\vec{t} , \vec{r} + \Lambda)  &\mapsto \left( a_{e^{\vec{t} } }, [ u_{\vec{r}} ]  \right)
\end{aligned}
\end{equation*}
and $\Lambda$ is commensurable with (the geometric embedding of) $\calO_k$.
Given $\varphi$, by restricting it to $H[\ep_0]U.z$, we get a smooth function $\psi:  [-\ep_0,\ep_0]^l \times \R^l/ \Lambda  \to \R$ such that
\begin{equation}\label{equation_model_focusing}
\begin{aligned}
         \int \varphi(u_{\vec{r} }.y) \, \diff\rmm_{z}(y)
        & = \frac{1}{(2\ep_0)^l} \int_{[-\ep_0,\ep_0]^l} \psi ( \vec{t}, [e^{-2\vec{t}}. \vec{r}_z] ) \,\diff \vec{t};
         \\
           \int \varphi (y) \,\diff \rmm_z^U(y) 
           &=  \frac{1}{(2\ep_0)^l} \int_{[-\ep_0,\ep_0]^l \times \R^l/\Lambda} \psi ( \vec{t}, [\vec{v}]) \,\diff \vec{t} \, \diff[\vec{v}]
\end{aligned}
\end{equation}
where $\vec{r}_z:=( \vec{t}_z)^{-2}. \vec{r}$ and $\diff[\vec{v}]$ is normalized to be a probability measure on $\R^l/\Lambda$.
Moreover, $C^{-1}\Lip(\varphi) \leq \Lip(\psi) \leq C\Lip(\varphi) $ for some constant dependent only on $\ep_0, \Lambda$ (i.e. only on $\ep_0,\Gamma$).

It suffices to show that the quantities on the right hand side of Eq.(\ref{equation_model_focusing}) are close to each other.
Without loss of generality we are going to assume
\[
   \normm{r_{z,l_1+l_2}} = \max \left\{ \normm{r_{z,i}} ,\; i=1,...,l_1+l_2 \right\}
\]
and this number is at least $10$ for otherwise the conclusion would be direct.

\subsubsection{Effective equidistribution of lines on tori defined by number fields}
In this subsection we prove effective equidistribution of coordinate lines on tori defined by quotients of geometric embedding of $\calO_k$ (see Lemma \ref{lemma_equidistribution_coordinate_lines_tori_number_fields} below). This follows from effective equidistribution of 
``irrational'' lines on tori and Diophantine properties of algebraic numbers. 
The results should be well-known and a proof in the appendix is provided in the appendix.
 Recall $\vec{e}_1,...,\vec{e}_{l_1+l_2}$ are defined via $\R^{l_1} \oplus \C^{l_2}$. So 
\[
   \left(    \vec{e_1},...  , \vec{e}_{l_1}, \vec{e}_{l_1+1}, i \vec{e}_{l_1+1},....
         , \vec{e}_{l_1+l_2}, i\vec{e}_{l_1+l_2} 
  \right)
\]
forms a basis of $\R^{l_1} \oplus \C^{l_2} \cong \R^l$ as a linear space over $\R$. 
\begin{lem}\label{lemma_equidistribution_coordinate_lines_tori_number_fields}
Let $\Lambda$ be a lattice in $\R^l$ commensurable with the geometric embedding of $\calO_k$. 
Take the normalization $\int_{\R^l/\Lambda} \diff[\vec{v}] =1$.
There exists some $0<\delta <0.5$ such that for  every $T_0\in \R$ and $T$ sufficiently large, the following holds for every Lipschitz-continuous function $f$:
\begin{itemize}
    \item[(1)] For $i=1,...,l_1$, 
    \begin{equation*}
    \begin{aligned}
                  \frac{1}{T} \int_{T_0}^{T_0+T} f([t \vec{e}_i  ]) \, \diff t = 
                  \int_{\R^l/\Lambda} f([\vec{v}]) \, \diff [\vec{v}] + \bmO \Big(\frac{\Lip(f)}{T^{\delta}} \Big).
   \end{aligned}
    \end{equation*}
    \item[(2)] For $j=1,..., l_2$ and every interval $I$ of length at most $2\pi$,
     \begin{equation*}
    \begin{aligned}
                  \frac{1}{T} \int_{T_0}^{T_0+T}  \int_{I}
                  f([t e^{i\theta}  \vec{e}_{l_1+j}  ]) \,\diff \theta \diff t = 
                  |I| \int_{\R^l/\Lambda} f([\vec{v}]) \, \diff [\vec{v}] + \bmO \Big(\frac{\Lip(f)}{T^{\delta}} \Big).
   \end{aligned}
    \end{equation*}
\end{itemize}
\end{lem}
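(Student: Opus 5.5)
We approach both parts through Fourier analysis on the torus $\R^l/\Lambda$. First we mollify $f$ at scale $\eta$, approximating it by $f_\eta := f*\rho_\eta$ with $\rho_\eta$ a smooth bump of width $\eta$. Then $\|f-f_\eta\|_\infty \le \Lip(f)\eta$, and the Fourier coefficients satisfy $|\widehat{f_\eta}(\xi)| \le \Lip(f)\,\eta^{-N}\|\xi\|^{-N}$ for $\xi\ne 0$ in the dual lattice $\Lambda^*$, up to constants. The second bound comes from moving one derivative onto $f$ and the remaining $N-1$ onto $\rho_\eta$; it also uses that $f$ may be centred, since subtracting the mean changes neither side.

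Expanding $f_\eta=\sum_{\xi\in\Lambda^*}\widehat{f_\eta}(\xi)e^{2\pi i\langle \xi,\cdot\rangle}$, the $\xi=0$ term gives the main term. In case (1), the term for $\xi\neq 0$ contributes
\[
\frac{1}{T}\int_{T_0}^{T_0+T} e^{2\pi i t\langle \xi,\vec e_i\rangle}\,\diff t = \bmO\Big(\frac{1}{\pi T|\langle\xi,\vec e_i\rangle|}\Big).
\]
In case (2), the inner $\theta$-integral is over an interval $I$, so we parametrize $\langle \xi, te^{i\theta}\vec e_{l_1+j}\rangle = t\,\mathrm{Re}(\overline{w}e^{i\theta})$, where $w\in\C$ is the $(l_1+j)$-th complex component of $\xi$. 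Integrating first in $t$ gives a bound of $\min(1, (T|w||\cos(\theta-\arg w)|)^{-1})$. Integrating this over $\theta\in I$ gives
\[
\bmO\Big(\frac{\log(2+T|w|)}{T|w|}\Big),
\]
which is still a power saving in $T|w|$.

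The key Diophantine input is a lower bound for the relevant coordinate of a nonzero dual vector. Since $\Lambda$ is commensurable with $\calO_k$, the dual lattice $\Lambda^*$ is commensurable with the embedding of the inverse different, under the trace pairing with the appropriate identification of $\R^{l_1}\oplus\C^{l_2}$. So there is a fixed $D\in\Z^+$ such that $D\xi$ corresponds to an element $\alpha\in\calO_k\setminus\{0\}$. The pairing $\langle\xi,\vec e_i\rangle$ is then, up to a constant, $\sigma_i(\alpha)$ (or the corresponding complex coordinate). Because $|\Nm(\alpha)|\ge1$ and every other conjugate satisfies $|\sigma_m(\alpha)|\ll\|\xi\|$, we get
\[
|\sigma_i(\alpha)| \gg \|\xi\|^{-(l-1)}.
\]
This step, the Liouville-type bound from the norm being a nonzero integer, is the main point, and it is elementary. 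The only care needed is the bookkeeping that identifies the dual lattice with a fractional ideal under the chosen real basis.

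Combining these, the error is at most
\[
\Lip(f)\eta+\sum_{\xi\ne0}\Lip(f)\,\eta^{-N}\|\xi\|^{-N}\cdot\frac{\|\xi\|^{l-1}\log(2+T)}{T}.
\]
Taking $N>2l$ makes the lattice sum converge, so this is $\ll \Lip(f)\big(\eta+\eta^{-N}T^{-1}\log T\big)$. Choosing $\eta=T^{-1/(N+2)}$ gives the claim with any $\delta<1/(N+2)$, uniformly in $T_0$ for $T$ large.

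The same smoothing step also replaces $f$ by $f_\eta$ on the left-hand sides at cost $\Lip(f)\eta$. In case (2) this uses $|I|\le 2\pi$.
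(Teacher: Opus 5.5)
Your proof is correct, and it takes a genuinely different route from the paper's. The paper does no Fourier analysis of its own. It imports the Green--Tao quantitative Kronecker theorem (Proposition \ref{proposition_effective_Kronecker}, for discrete progressions under a Diophantine hypothesis) and upgrades it to continuous lines $t\vec{\alpha}$ by a random-dilation trick (Lemma \ref{lemma_Appendix_irrational_lines}). It then verifies the Diophantine hypothesis using exactly your Liouville-type input $|\sigma_{l_1+j}(\beta)|\ge \|\scrL_\beta\|^{-(l-1)}$, which comes from $|\Nm(\beta)|\ge 1$. In case (2) it removes a set $E$ of angles of measure $O(T^{-0.1})$ on which $|\mathrm{Re}(\sigma_{l_1+j}(\beta)e^{i\theta})|$ is small for some $\beta$ of bounded height, and bounds that part trivially. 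Your integration of $\min\{1,(T|w||\cos(\theta-\arg w)|)^{-1}\}$ over $\theta$ is the smooth analogue of that removal, at the cost of only a logarithm.

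What your route buys:
\begin{itemize}
\item It is self-contained and treats part (1) explicitly; the paper only does (2).
\item It gives an explicit exponent, any $\delta<1/(N+2)$ with $N$ about $2l$. The paper's exponent is $\delta=0.05\,\kappa^{-1}(2l+2)^{-2}$, with $\kappa$ an unspecified Green--Tao constant.
\item Your centring remark correctly handles the requirement that only $\Lip(f)$ may appear in the error. The paper glosses this point when it bounds $\|f\|_{\sup}$ by $\Lip(f)$.
\end{itemize}

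The paper's route, in exchange, isolates the number-theoretic input as a Diophantine condition fed into a black-box equidistribution theorem. That format transfers to settings, such as polynomial orbits on nilmanifolds, where no simple Fourier expansion is available.

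The one step worth writing out in yours: $x\mapsto \log(2+x)/x$ is decreasing. Only a lower bound on $|w|$ is available, so you need this monotonicity to deduce the factor $\|\xi\|^{l-1}\log(2+T)/T$ from $|w|\gg\|\xi\|^{-(l-1)}$.
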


\subsubsection{Proof of Lemma \ref{lemma_focusing_1}}

The implicit constants in the big $O(-)$ here will only be dependent on $\ep_0$ and $\Gamma$.

We start with the first equation in Eq.(\ref{equation_model_focusing}).
To simply notations, define
\begin{itemize}
    \item  $ r_{z,l_1+l_2} = T e^{i\theta} \text{ for some }T >0 ,\; \theta \in [0,2\pi)$;
    \item  $\vec{e}_{l_1+l_2}:=(0,...,0,1)\in k_{\infty}$ and for $\vec{v} =(v_1,...,v_{l_1+l_2})\in k_{\infty}$, let $\vec{v}^{(l_1+l_2)} = \vec{v} - v_{l_1+l_2} \vec{e}_{l_1+l_2}$;
    \item $\vec{v}_0:=  \big(e^{-2\vec{t}}. \vec{r}_z  \big)^{(l_1+l_2)}$ and $\vec{t}_0 := \vec{t}^{(l_1+l_2)}$;
    \item $t_{l_1+l_2}^R := \mathrm{Re}(t_{l_1+l_2})$, $s_{l_1+l_2}:= e^{-2t^R_{l_1+l_2}}$ and $t_{l_1+l_2}^I := \mathrm{Im}(t_{l_1+l_2})$.
\end{itemize}
With these notations,
\begin{equation*}
\begin{aligned}
      ( \vec{t},  [ e^{-2\vec{t}} .\vec{r}_z ] )
      &= \left(
         \vec{t}_0+ t_{l_1+l_2} \vec{e}_{l_1+l_2} , [\vec{v}_0 + e^{-2t_{l_1+l_2} }  T e^{i\theta} \vec{e}_{l_1+l_2}
         ]
      \right)
      \\
      &=
      \left(
         \vec{t}_0+ \big(  \frac{\log{s_{l_1+l_2}}}{-2} +i t^I_{l_1+l_2} \big) \vec{e}_{l_1+l_2} , [\vec{v}_0 + e^{-2t_{l_1+l_2} }  T e^{i\theta} \vec{e}_{l_1+l_2}
         ]
      \right).
\end{aligned}
\end{equation*}
Let $M$ be a positive integer whose exact value will be chosen later. 
Define for $m=0,...,M-1$,
\[
   s_m:= \frac{m}{M} (e^{2\ep_0}- e^{-2\ep_0}) + e^{-2\ep_0},\quad t_m^I:= \frac{m}{M}2\ep_0 -\ep_0.
\]
Then
\begin{equation*}
\begin{aligned}
    ( e^{-2\ep_0}, e^{2\ep_0} ] = \bigsqcup_{m=0}^{M-1} (0, \frac{e^{2\ep_0}- e^{-2\ep_0} }{M} ] + s_k,\quad
    (-\ep_0, \ep_0] = \bigsqcup_{m=0}^{M-1} (0, \frac{2\ep_0}{M} ] + t_m^I.
\end{aligned}
\end{equation*}
Note that for every $m_1,m_2 \in \{0,...,M-1\}$ and every $s\in [0, \frac{e^{2\ep_0}- e^{-2\ep_0} }{M} ],\, t^I \in [0, \frac{2\ep_0}{M} ] ,\, [v] \in \R^l/\Lambda$, we have
\begin{equation}\label{equation_approximate_first_coordinate}
\begin{aligned}
    & \, \psi \Big(
    \vec{t_0}+ \big( \frac{\log(s_{m_1}+s)}{-2}  + i (t^I_{m_2}  + t^I )
    \big)  \vec{e}_{l_1+l_2} ,  [v]
    \Big) 
    \\
    = &    \,
     \psi\Big(
    \vec{t_0}+ \big( \frac{\log(s_{m_1})}{-2}  + i t^I_{m_2}  
    \big)  \vec{e}_{l_1+l_2} ,  [v]
    \Big) 
    +O\Big( \frac{\Lip(\psi)}{M} \Big) .
\end{aligned}
\end{equation}
Set furthermore
\[
    \vec{w}_{m_1,m_2}:= \big( \frac{\log(s_{m_1})}{-2}  + i t^I_{m_2}  
    \big)  \vec{e}_{l_1+l_2} ,\quad
     \theta_{m_2}:= \theta -2t_{m_2}^I.
\]
We estimate the right hand side of Eq.(\ref{equation_model_focusing}) by first integrating against 
\[
\diff t_{l_1+l_2} = \diff t^R_{l_1+l_2} \diff t^I_{l_1+l_2} = \frac{ 1}{2 s_{l_1+l_2}} \diff s_{l_1+l_2} \diff t^I_{l_1+l_2}.
\]
We will write $s_{l_1+l_2} = s_{m_1} + s $ and $t^I_{l_1+l_2} = t^I_{m_2}+ t^I $ below.
Also note that $\frac{1}{2(s_{m_1}+s)} = \frac{1}{2s_{m_1}} + O(\frac{1}{M})$ for $s\in (0, \frac{e^{2\ep_0} -e^{-2\ep_0} }{M})$.
We have
\begin{equation}\label{equation_approximation_by_pieces}
\begin{aligned}
       &       \frac{1}{(2\ep_0)^2} \int_{[-\ep_0,\ep_0]^2} \psi ( \vec{t}, e^{-2\vec{t}} r_z+ [w_o] ) \,\diff t_{l_1+l_2}
       \\
       =&
       \frac{1}{(2\ep_0)^2}
       \sum_{m_1,m_2=0}^{M-1}  \int_{0}^{
       \frac{e^{2\ep_0} -e^{-2\ep_0} }{M}
       } 
       \frac{\diff s}{2s_{m_1}}
       \int_0^{\frac{2\ep_0}{M}}
       \diff t^I
       \psi\left(
        \vec{t}_0 + \vec{w}_{m_1,m_2},
        \vec{v}_0 + \big( T(s+ s_{m_1}) e^{i(-2t^I + \theta_{m_2} )}  \big) \vec{e}_{l_1+l_2}
       \right)
       \\
       & + O \Big( \frac{\Lip(\psi)}{M} \Big)
       \\
       =&
       \frac{1}{(2\ep_0)^2 T}
       \sum_{m_1,m_2=0}^{M-1}  \int_{ T s_{m_1} }^{
       T( s_{m_1}+
       \frac{e^{2\ep_0} -e^{-2\ep_0} }{M})
       } 
       \frac{\diff s}{2s_{m_1}}
       \int_0^{ \frac{2\ep_0}{M}}
       \diff t^I
       \psi\left(
        \vec{t}_0 + \vec{w}_{m_1,m_2},
        \vec{v}_0 + \big(s e^{i(-2t^I + \theta_{m_2} )}  \big) \vec{e}_{l_1+l_2}
       \right)
       \\
       &+ O \Big( \frac{\Lip(\psi)}{M}   \Big)
\end{aligned}
\end{equation}

For $m_1,m_2$ fixed, apply part (2) of Lemma \ref{lemma_equidistribution_coordinate_lines_tori_number_fields} to
\[
     f([\vec{v}]) :=  \psi\left(
        \vec{t}_0 + \vec{w}_{m_1,m_2},
        [\vec{v}_0 +  \vec{v}]
       \right),
\]
\[
      j=l_2,\quad
      I= -2 [0 ,\frac{2\ep_0}{M}] + \theta_{m_2},\quad
      T_0= T s_{m_1},\quad
      T= T\frac{e^{2\ep_0} -e^{-2\ep_0} }{M}.
\]
We denote the $\delta$ in Lemma \ref{lemma_equidistribution_coordinate_lines_tori_number_fields} as $\delta_1$ and 
fix a constant $L_3>1$, dependent only on $\Lambda$, such that $L_3^{-1} \Lip(\psi) \leq \Lip(f) \leq L_3 \Lip(\psi)$.
For $T$ sufficiently large,
\begin{equation}\label{equation_apply_equidistribution_line}
\begin{aligned}
         &
          \frac{1}{T}
         \int_{ T s_{m_1} }^{
       T( s_{m_1}+
       \frac{e^{2\ep_0} -e^{-2\ep_0} }{M})
       } 
       \frac{\diff s}{2s_{m_1}}
       \int_0^{ \frac{2\ep_0}{M}}
       \diff t^I
       \psi\left(
        \vec{t}_0 + \vec{w}_{m_1,m_2},
        [\vec{v}_0 + \big(s e^{i(-2t^I + \theta_{m_2} )}  \big) \vec{e}_{l_1+l_2}  ]
       \right)
       \\
       \, &
        \frac{2\ep_0(e^{2\ep_0} -e^{-2\ep_0}) }{M^2 }
        \int_{\R^l/\Lambda}   \psi\left(
        \vec{t}_0 + \vec{w}_{m_1,m_2},
        [ \vec{v}]
       \right) \, \diff[\vec{v}]
       +  {(e^{2\ep_0} -e^{-2\ep_0} )^{1-\delta_1 } L_3}
       \bmO \Big(\frac{\Lip(\psi)}{  T^{\delta_1}M^{-\delta_1+1}} \Big).
\end{aligned}
\end{equation}
By reversing the discussion above Eq.(\ref{equation_approximate_first_coordinate}), we find
\begin{equation}\label{equation_final_approximate}
\begin{aligned}
        &
      \frac{e^{2\ep_0} - e^{-2\ep_0}}{2\ep_0 M^2} \sum_{m_1,m_1=0}^{M-1}  \frac{1}{2m_1} \int_{\R^l/\Lambda}   \psi\left(
        \vec{t}_0 + \vec{w}_{m_1,m_2},
        [ \vec{v}]
       \right)  \diff[\vec{v}]
       \\
       = \,&
       \frac{1}{(2\ep_0)^2}
        \sum_{m_1,m_1=0}^{M-1} 
        \int_{0}^{\frac{  e^{2\ep_0} - e^{-2\ep_0} }{M}
        } 
        \int_{0}^{\frac{2\ep_0}{M}} \int_{\R^l/\Lambda}  \psi\left(
          \vec{t}^{(l_1+l_2)} + \big( \frac{ \log(s_{m_1}+s)  }{-2} + i (t_{m_2}^I + t^I) \big) \vec{e}_{l_1+l_2}
        \right) 
        \\
        &\; \diff [\vec{v}]   \diff t^I  \frac{\diff s}{2(s_{m_1}+s)}+ O(\frac{\Lip(\psi) }{M})
        \\
        =\,&
       \frac{1}{(2\ep_0)^2} \int_{-\ep_0}^{\ep_0} \int_{-\ep_0}^{\ep_0} \int_{\R^l/\Lambda}
       \psi(\vec{t}, [\vec{v}]) \,\diff [\vec{v}] \diff t^R_{l_1+l_2} \diff t^I_{l_1+l_2} + 
       O\Big(   \frac{\Lip(\psi)  }{ M } \Big).
\end{aligned}
\end{equation}
By combining Eq.(\ref{equation_approximation_by_pieces}
, \ref{equation_apply_equidistribution_line}
, \ref{equation_final_approximate}) and Eq.(\ref{equation_model_focusing}), and integrating the rest $\diff t_1 ...\diff t_{l_1+l_2-1}$,
 we find that
\begin{equation*}
\begin{aligned}
      \int \varphi(u_{\vec{r} }.y) \, \diff\rmm_{z}(y) =    \int \varphi (y) \,\diff \rmm_z^U(y) 
      + O \Big(  \Lip(\varphi) (\frac{1}{M} + \frac{M^2}{T^{\delta_1}  M^{-\delta_1+1}  })
      \Big).
\end{aligned}
\end{equation*}
Set $M:= \lfloor T^{0.11\delta_1} \rfloor$. We have 
\[
\frac{1}{M} + \frac{M^2}{T^{\delta_1}  M^{-\delta_1+1}  }
\leq 
T^{-0.1\delta_1} + T^{-\delta_1} T^{2(0.11\delta_1)} \leq 2 T^{-0.1 \delta_1}.
\]
As $T = \max_{i=1,...,l_1+l_2} \left\{ \normm{r_{z,i}} \right\} $ satisfies $C^{-1}T \leq \norm{u_{\vec{r}}.e_2} \leq CT$ for some $C>1$ dependent only on $\ep_0$ and $\Gamma$, we conclude
\[
   \int \varphi(u_{\vec{r} }.y) \, \diff\rmm_{z}(y) =    \int \varphi (y) \,\diff \rmm_z^U(y) 
      + O \big(  \Lip(\varphi)\norm{u_{\vec{r}}.e_2}^{-\delta}
      \big)
\]  
with $\delta:= 0.1\delta_1$.
The proof of Lemma \ref{lemma_focusing_1} is now complete.

\subsubsection{KHU decomposition}

Recall $K_0 = \SO_2(\R)^{ l_1} \times \SU_2(\R)^{ l_2}$ is a maximal compact subgroup of $G$.
To deduce Proposition \ref{proposition_equidistribution_focusing_affine_quadric} from Lemma \ref{lemma_focusing_1}, we need the following group theoretical result.
\begin{lem}\label{lemma_KHU_decomposition}
The maps $K_0 \times H \times U \to G$ and $K_0 \times H \times U^- \to G$  are both surjective.
\end{lem}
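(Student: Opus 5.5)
The decomposition $G=\prod_\nu \SL_2(k_\nu)$ is compatible with $K_0$, $H$ and $U^{\pm}$, since each of these subgroups is a product of its factors. It therefore suffices to show, for each single factor $\SL_2(F)$ with $F=\R$ or $\C$, that $K_F\times H_F\times U_F\to \SL_2(F)$ is surjective. Here $K_F=\SO_2(\R)$ or $\SU_2(\R)$, $H_F$ is the full diagonal torus $\{h_t: t\in F^\times\}$, and $U_F$ is the upper unipotent group. Surjectivity of the product map follows by taking the product of the factorwise decompositions. The $U^-$ statement reduces to the $U$ statement: conjugate by the Weyl element $w=\begin{bmatrix}0&1\\-1&0\end{bmatrix}$, which lies in $K_F$, normalizes $H_F$, and satisfies $wU_Fw^{-1}=U^-_F$. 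Then from $w^{-1}gw=khu$ we get $g=(wkw^{-1})(whw^{-1})(wuw^{-1})\in K_FH_FU^-_F$.

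For a single factor this is the Iwasawa decomposition. Let $g\in\SL_2(F)$ act on $F^2$ and consider the first column $g.e_1$. Since $K_F$ acts transitively on the vectors of a given Hermitian length in $F^2$, there is $k\in K_F$ with $k^{-1}g.e_1=\lambda e_1$, where $\lambda=\norm{g.e_1}>0$. The matrix $b:=k^{-1}g$ has first column $\lambda e_1$, so it is upper triangular with diagonal $(\lambda,\lambda^{-1})$, using $\det=1$. We can then write $b=h_{\lambda}u_{r}$ for suitable $r\in F$: indeed $\begin{bmatrix}\lambda&\beta\\0&\lambda^{-1}\end{bmatrix}=h_\lambda u_{\lambda^{-1}\beta}$. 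Hence $g=k\,h_\lambda\,u_{\lambda^{-1}\beta}$. (In the real case $\lambda>0$ suffices, so even the subgroup with positive entries works; our $H$ is larger anyway.)

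There is essentially no obstacle. The only point to check is the transitivity of $K_F$ on spheres: $\SO_2(\R)$ acts transitively on the unit circle in $\R^2$, and $\SU_2(\R)$ acts transitively on the unit sphere of $\C^2$, since any unit vector $(a,c)$ is the first column of $\begin{bmatrix}a&-\bar c\\ c&\bar a\end{bmatrix}\in\SU_2$. Beyond that, one must make sure the indexing of $H$, $U$ and $K_0$ matches the factors $k_{\nu_i}$ as in the paper's conventions, which is immediate from the definitions.
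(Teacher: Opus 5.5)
Your proof is correct and is essentially the paper's argument: in each factor you use $K_0$ to rotate $g_i.e_1$ to $\lambda_i e_1$ with $\lambda_i>0$, and the remaining element lies in $HU$. The paper phrases this last step via the stabilizer of $e_1$ being $U$, after first splitting the diagonal part into an element of $H^{(1)}$ times $h^{\Delta}_{\lambda^{1/l}}$; it uses that refinement later, but the lemma itself does not need it. Your Weyl-element conjugation for the $U^-$ case fills in what the paper leaves as ``similar''.
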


\begin{proof}
For simplicity we only prove the first one. So take $g=(g_1,...,g_{l_1+l_2})\in G$. 
For each $i=1,...,l_1+l_2$, there exist  $\lambda_i >0$  and $b_i \in \SO_2(\R)$ or $\SU_2(\R)$ depending on $i\leq l_1$ or $i >l_1$ such that 
\[
    b_i g_i .e_1 = \lambda_i e_1.
\]
Let $\lambda := \prod_{i=1}^{l_1} \lambda_i \cdot \prod_{i=1}^{l_2} \lambda_{l_1+i}^2$ and $\vec{s}:=(s_1,...,s_{l_1+l_2})$ with $s_i := \lambda^{\frac{1}{l}}/\lambda_i$ for $i=1,...,l_1+l_2$.
Then $h_{\vec{s}} \in H^{(1)}$ and 
\[
   h_{\lambda^{-1/l}} h_{s_i} b_i g_i .e_1 = e_1 ,\quad \forall\, i=1,...,l_1+l_2.
\]
Let $b := (b_1,...,b_{l_1+l_2}) \in K_0$.
Since the stabilizer of $e_1$ in $G$ is exactly $U$, we have $g = b^{-1} h_{\vec{s}}^{-1} h^{\Delta}_{\lambda^{1/l}}  u$ for some $u\in U$.
\end{proof}

\subsubsection{Proof of Proposition \ref{proposition_equidistribution_focusing_affine_quadric}  }
\label{subsubsection_proof_proposition_equidistribution_focusing_affine_quadric}
Before the proof, let us fix some $\kappa_1 >1$ such that  for all $s>0$
\[
   \Lip ( (h_s^{\Delta})^{-1}.\varphi) < \max\{s, s^{-1}\}^{\kappa_1 } \Lip(\varphi)
\]
where $ (h_s^{\Delta})^{-1}.\varphi(x):= \varphi( (h_s^{\Delta}).x)$.

For simplicity we only prove the part when $ x_1 > x_2^{\delta}$. Of course $\delta $ will only be determined later.
Fix $\varphi \in C_c^{\infty}(Y)$ supported on $\scrC$ and $(x_1,x_2,a)\in \calN_0^{\circ}$.
Find an $M=M_{\scrC}>0$ such that the support of $f_{\varphi}^z$ is contained in  $ [0,M]^2 \times \overline {A}$.
We further assume $x_1,x_2 \in (0,M]$.

Let $t\in \R^+$ be the unique number satisfying (see Eq.(\ref{equation_definition_x(t)}))
\[
    x_i^{-1} = \norm{ah_t^{\Delta}.e_i}_{k_{\infty}} , \;\text{ for } i=1,2,\;
    \text{ or equivalently, }   \;  t^l = \frac{1}{x_1 \norm{a.e_1}}_{k_{\infty}} = x_2 \norm{a.e_2}_{k_{\infty}}.
\]
So by definition (see Eq.(\ref{equation_f_quadratic}, \ref{equation_f_boundary_quadratic})),
\[
     f_{\varphi}(x_1,x_2,a) = \int \varphi(ah_t^{\Delta}.y)\, \diff \rmm_{\ep_0}(y),\quad
     f_{\varphi}(x_1,0,\infty) = \int \varphi(h^{\Delta}_{x_1^{1/l}} .y) \diff \rmm_{\ep_0}^U(y).
\]
Also, $  x_1 =  \norm{   b h_s^{\Delta} u_{\vec{r}} h^{(1)} .  e_1}_{k_{\infty}}^{-1} = s^{-l}
$.
On the other hand, by Lemma \ref{lemma_KHU_decomposition}, find $b \in K$, $h^{(1)} \in H^{(1)}$, $s\in \R^+$ and $\vec{r} \in k_{\infty}$ such that
\[
  a h_t^{\Delta} = b h_s^{\Delta} u_{\vec{r}} h^{(1)}.
\]
Now we transport our conditions into these new notations.

Firstly, as $\rmm_{\ep_0}$ is $H^{(1)}$-invariant and $\rmm_{\ep_0}^U$ is $UH^{(1)}$-invariant,
\begin{equation*}
\begin{aligned}
          f_{\varphi}(x_1,x_2,a) = \int \varphi(  h_s^{\Delta} u_{\vec{r}} . y)\, \diff \rmm_{\ep_0}(y),
          \quad
         f_{\varphi}(x_1,0,\infty) = \int \varphi(h^{\Delta}_{s} .y) \diff \rmm_{\ep_0}^U(y).
\end{aligned}
\end{equation*}
Secondly,
\begin{equation*}
\begin{aligned}
    &s^{-l} = x_1 \leq M \implies s^{-1} \leq M^{\frac{1}{l}},
    \\
    &
   x_1 > x_2^{\delta} \iff s^l  < \norm{ h_s^{\Delta} u_{\vec{r}}.e_2 }_{k_{\infty}}^{\delta}
   \leq \big(\max\{s^{2l}, s^{-2l} \}  \norm{  u_{\vec{r}}.e_2 }_{k_{\infty}} \big)^{\delta}
   \leq \big( s^{2l}M^2  \norm{  u_{\vec{r}}.e_2 }_{k_{\infty}} \big)^{\delta}
   . 
\end{aligned}
\end{equation*}
The latter one further implies that 
\begin{equation*}
\begin{aligned}
        &
        s ^{l(-1+\delta^{-1})} \leq M^2  \norm{  u_{\vec{r}}.e_2 }_{k_{\infty}} 
        \\
        \implies &
        s \leq \big(
        M^2  \norm{  u_{\vec{r}}.e_2 }_{k_{\infty}} 
        \big) ^{\frac{\delta}{l(1-\delta)}} \leq 
        \big(
        M^2  \norm{  u_{\vec{r}}.e_2 }_{k_{\infty}} 
        \big) ^{\frac{2\delta}{l}  }. 
\end{aligned}
\end{equation*}
Write the $\delta$ from Lemma \ref{lemma_focusing_1} as $\delta_1$ and let $\delta:= \frac{l}{4\kappa_1}\delta_1$. 
We conclude that 
\begin{equation*}
\begin{aligned}
  \max\{s,s^{-1}\}^{\kappa_1} 
  \leq & \max
   \Big\{
  M^{\frac{4
  \delta \kappa_1}{l}}  \norm{u_{\vec{r}} .e_2 }_{k_{\infty}}^{ \frac{2\delta\kappa_1 } {  l}  } ,
  M^{\frac{\kappa_1}{l}}
  \Big\}
  \\
  \leq &
  M^{\frac{\kappa_1}{l}} \norm{u_{\vec{r}}.e_2 }_{k_{\infty}}^{ \frac{2\delta\kappa_1  }{l} } 
  = M^{\frac{\kappa_1}{l}}
   \norm{u_{\vec{r}}.e_2 }_{k_{\infty}}^{ \frac{\delta_1 }{2l} } .
\end{aligned}
\end{equation*}
Then, applying Lemma \ref{lemma_focusing_1},  for every $z\in [H^{(1)}]$,
\begin{equation}
\begin{aligned}
    \normm{
     \int \varphi( h_s^{\Delta} u_{\vec{r}} y ) \, \diff \rmm_z(y) -  \int \varphi( h_s^{\Delta} y ) \, \diff \rmm^U_z(y)
    }
     & \leq \max\{s,s^{-1}\}^{\kappa_1} \Lip (\varphi) \norm{u_{\vec{r}} .e_2}_{k_{\infty}}^{-\delta_1}
     \\
     & \leq
     M^{\frac{\kappa_1}{l}}  \norm{u_{\vec{r}}.e_2 }_{k_{\infty}}^{- (1- \frac{1}{2l}  )} 
     \Lip(\varphi)
     \\
     & \leq M^{\frac{\kappa_1}{l}} \norm{u_{\vec{r}} .e_2 }_{k_{\infty}}^{ - 0.5 \delta_1} \Lip(\varphi).
\end{aligned}
\end{equation}
The proof is complete by integrating over $z\in [H^{(1)}]$ and taking $C:= M^{\frac{\kappa_1}{l}}$, $\delta':=   0.5 \delta_1$.

\subsection{Effective equidistribution: the generic case}\label{2.5}

The purpose of this subsection is to prove the following effective equidistribution statement:
\begin{prop}\label{proposition_effective_equidistribution_generic_quadric}
    For every $\delta  \in (0, 0.5)$, there exists $\delta'\in (0,0.5)$ such that the following holds. For every compact subset $\scrC\subset Y$, there exists
     $C>1$ such that for 
     every $(x_1,x_2,a)\in \calN_0^{\circ}$ and
    every smooth function $\varphi$ supported on $\scrC$ that is $K_0$-invariant, one has
   \[
    \normm{
        f_{\varphi}(x_1,x_2, a ) - f_{\varphi}(0,0,\infty)
    } \leq C \calS(\varphi) \max\{x_1,x_2\}^{\delta' }
    \]
    whenever $0<x_1<x_2^{\delta}$ and $0<x_2<x_1^{\delta}$.
\end{prop}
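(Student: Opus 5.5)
The natural route is the Duke--Rudnick--Sarnak/Eskin--McMullen thickening argument, made effective with Theorem \ref{theorem_effective_mixing}. In that argument the hypotheses $x_1<x_2^{\delta}$ and $x_2<x_1^{\delta}$ serve to rule out the focusing regime of Proposition \ref{proposition_equidistribution_focusing_affine_quadric}.

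\textbf{Step 1 (normal form).} Fix $(x_1,x_2,a)\in\calN_0^\circ$ and let $t$ be as in Eq.(\ref{equation_definition_x(t)}), so that $g:=ah^\Delta_t$ satisfies $\norm{g.e_i}_{k_\infty}=x_i^{-1}$. By the $KAH$-decomposition, $g=b\,a'\,h^{\Delta}_{\theta}\,h^{(1)}$ with $b\in K_0$, $a'\in A$, $h^{(1)}\in H^{(1)}$ and $\theta\in\R^+$.
\begin{itemize}
\item Since $\varphi$ is $K_0$-invariant and $\rmm_{\ep_0}$ is $H^{(1)}$-invariant, we get $f_\varphi(x_1,x_2,a)=\int\varphi(a'h^\Delta_\theta.y)\,\diff\rmm_{\ep_0}(y)$.
\item Compare $\norm{a'h^\Delta_\theta.e_i}$ with $\norm{a'.e_1}=\norm{a'.e_2}$. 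This gives $\theta^{2l}=x_2/x_1$ and $\norm{a'.e_1}^2\asymp (x_1x_2)^{-1}$.
\item Under the two hypotheses, $\max\{\theta,\theta^{-1}\}^{2l}\le \max\{x_1,x_2\}^{-(1-\delta)}$, whereas $(x_1x_2)^{-1}\ge \max\{x_1,x_2\}^{-(1+\delta^{-1})}$.
\end{itemize}
So the $H^\Delta$-shift costs at most a power of $\max\{x_1,x_2\}^{-1}$ in Lipschitz/Sobolev norms, through the constant $\kappa_1$ of \S\ref{subsubsection_proof_proposition_equidistribution_focusing_affine_quadric}. That power has exponent a fixed multiple of $(1-\delta)$ and is beaten by the mixing gain once $\delta$ is fixed. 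Replacing $\varphi$ by $(h^\Delta_\theta)^{-1}.\varphi$, it therefore suffices to treat translates by $a'\in A$. The price is a factor $\max\{x_1,x_2\}^{-\kappa}$ on $\calS(\varphi)$ and a harmless enlargement of $\scrC$, since $\theta$ ranges in a compact set relative to the support.

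\textbf{Step 2 (thickening).} For $\eta\in(0,\ep_0)$, let $W_\eta\subset U^+U^-$ be the $\eta$-ball in the directions transverse to $H$; by the choice of $\ep_0$, $W_\eta H[\ep_0]\times[H^{(1)}]$ embeds into $Y$. Define $\rho_\eta$ on $Y$ as $\rmm_{\ep_0}$ convolved with a smooth bump on $W_\eta$, normalized to total mass $1$, with $\calS(\rho_\eta)\ll\eta^{-N}$. The key geometric input is a wavefront property for the symmetric pair $(G,H)$: for $a'\in A$ and $w\in W_\eta$,
\[
a'w=k_w\,w'\,a'\,h_w,\qquad k_w\in K_0,\; h_w\in H,\; d(k_w,\id),\,d(w',\id),\,d(h_w,\id)\ll\eta .
\]
This follows from the fact that $K_0\times A\times H\to G$ is a submersion near $(\id,a',\id)$ uniformly in $a'$, which can be checked by hand for $\SL_2$ in each coordinate. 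Using $K_0$-invariance to kill $k_w$, the Lipschitz bound for $w'$, and the near-invariance of $\rmm_{\ep_0}$ under $h_w$, we get
\[
\int\varphi(a'.y)\,\rho_\eta(y)\,\diff\rmm_Y(y)=f_\varphi+O(\Lip(\varphi)\,\eta).
\]
The error under $h_w$ comes from the boundary of the box $H[\ep_0]$ and is of size $O(\eta/\ep_0)$.

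\textbf{Step 3 (mixing and optimization).} Apply Theorem \ref{theorem_effective_mixing} to $\varphi$ and $\rho_\eta$. The main term is $\int\varphi\,\diff\rmm_Y=f_\varphi(0,0,\infty)$ because $\int\rho_\eta=1$. The error is $\calS(\varphi)\,\eta^{-N}\,(x_1x_2)^{\delta_0}\max\{x_1,x_2\}^{-\kappa(1-\delta)}$, where $\delta_0$ is the mixing exponent. Choosing $\eta=\max\{x_1,x_2\}^{c}$ with $c$ small in terms of $\delta,\delta_0,N,\kappa$ yields the bound $C\calS(\varphi)\max\{x_1,x_2\}^{\delta'}$. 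One bookkeeping point needs care here. The exponent $\kappa(1-\delta)/(2l)$ coming from the $H^\Delta$-twist must be strictly smaller than the mixing gain $\delta_0(1+\delta^{-1})$. If this does not hold automatically for all $\delta\in(0,0.5)$, I would first check whether the twist can instead be absorbed into the measure. Since $(h^\Delta_\theta)_*\rmm_{\ep_0}$ is again a thickened piece of the same orbit, one could thicken that piece directly rather than moving $h^\Delta_\theta$ onto $\varphi$.

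\textbf{Main obstacle.} I expect the hardest step to be the uniform effective wavefront estimate of Step 2, together with the control of the twist $h^\Delta_\theta$. The measure $\rmm_{\ep_0}$ is only $H^{(1)}$-invariant, not $H$-invariant. So any $H^\Delta$-component produced by the $KAH$-decomposition must be tracked, and the condition that $x_1$ and $x_2$ are polynomially comparable is exactly what keeps it polynomially bounded.
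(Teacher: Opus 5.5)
There is a genuine gap in Steps 1 and 3. Moving the $H^{\Delta}$-twist onto $\varphi$ costs more than mixing gains as soon as $x_1$ and $x_2$ are far from comparable. Your $KAH$ step is vacuous, since $ah^{\Delta}_t$ already lies in $AH$; so $a'=a$ and $\theta=t$.

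Your exponents are also off. For $x_1\le x_2$ the hypothesis only gives $x_1>x_2^{1/\delta}$, so $\theta^{2l}=x_2/x_1$ can be as large as $x_2^{-(\delta^{-1}-1)}$. Your bound holds with $\min\{x_1,x_2\}$ in place of $\max$. Likewise, $(x_1x_2)^{-1}\le x_2^{-(1+\delta^{-1})}$ is an upper bound, not a lower bound.

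Write $x_1=x_2^{\beta}$ with $1\le\beta<\delta^{-1}$. The twist multiplies $\calS(\varphi)$ by $\theta^{\kappa}=x_2^{-(\beta-1)\kappa/(2l)}$, while mixing gains only $(x_1x_2)^{\delta_0}=x_2^{(1+\beta)\delta_0}$. Since $\mathrm{Ad}(h_\theta)$ scales $\mathfrak{u}^{\pm}$ by $\theta^{\pm 2}$, you have $\kappa\ge 2$, and $\delta_0<1/2$. So already for $l=1$ the net exponent is less than $(3-\beta)/2$, which is negative for $\beta\ge 3$. The argument therefore fails as soon as $\delta<1/3$, and in general it only covers $\beta$ near $1$.

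Yet the proposition is needed precisely for small $\delta$. Together with Proposition~\ref{proposition_equidistribution_focusing_affine_quadric}, whose threshold exponent is small, it must cover all of $\calN_0^{\circ}$.

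Your fallback, thickening $(h^{\Delta}_\theta)_*\rmm_{\ep_0}$ directly, does not help. That piece of the divergent orbit lies far in the cusp. Conjugating $\Gamma\cap U^{-}$ by $h^{\Delta}_\theta$ shows the injectivity radius there is $O(\theta^{-2})$. So the thickening either fails to embed or has width $\eta=O(\theta^{-2})$, and then $\eta^{-N}$ reproduces the same polynomial loss.

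The missing idea is to absorb the twist into the group element rather than into $\varphi$ or into a cusp excursion. Using $H^{(1)}$-invariance of $\rmm_{\ep_0}$, the paper first balances $h^{\Delta}_t=h_{\vec t}\,h^{(1)}$ with $t_j=\norm{a_{s_j}.e_1}^{c}$, where $t^l=\norm{a.e_1}_{k_\infty}^{c}$. The hypothesis forces $c\le(1-\delta)/(1+\delta)<1$. Hence $\norm{a_{s_j}h_{t_j}.e_2}\ge 1$ at every place, and for $t_j>11$ Lemmas~\ref{lemma_KAU_decomposition_1} and~\ref{lemma_KAU_decomposition_2} give $a_{s_j}h_{t_j}=b_j a_{\lambda_j}u^{-}_{r_j}$ with $|r_j|\ge 100$.

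The translated box is then $a_{\lambda_j}$ applied to the curve $u^{-}_{r_j}h_{e^{\xi}}z$. This curve stays in the bounded set $\scrC_0$, but its length in the normalized direction $w^{-}_{r_j}$ is comparable to $|r_j|$. The paper cuts it into $\lfloor |r_j|\rfloor$ unit pieces, each uniformly transverse to the weak-stable directions of $a_{\lambda_j}$ (Lemma~\ref{lemma_wave_front}). Each piece is thickened by that weak-stable subgroup, at a base point of $\scrC_0$ with uniform charts. That subgroup passes through $a_{\lambda_j}$ without expansion, so no $H$-correction arises.

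Mixing is then applied with $\vec g=a_{\vec\lambda}$, for which $\norm{\vec g.e_1}_{k_\infty}\norm{\vec g.e_2}_{k_\infty}\ge 11^{-2(l_1+l_2)}x_2^{-2}$, and the pieces are averaged. The large part of $h^{\Delta}_t$ thus becomes the length of a curve in a compact region, which costs nothing. In particular, the paper's $\delta'$ does not depend on $\delta$.
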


\subsubsection{$KAU$ decomposition}

\begin{lem}\label{lemma_KAU_decomposition_1}
    For every $g\in \SL_2(\R)$ satisfying $\norm{g.e_1}\geq 1$, there exist $k\in \SO_2(\R)$, $\lambda \in \R^+$ and $r\in \R$ such that $g= ka_{\lambda}u_r$. Similarly, $g$ can be written as $ ka_{\lambda}u_r^-$ if $\norm{g.e_2}\geq 1$ instead.
\end{lem}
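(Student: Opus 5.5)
The plan is to reduce the decomposition to matching the orbit of $e_1$. The point is that $U=\{u_r\}$ is exactly the stabilizer of $e_1$ in $\SL_2(\R)$, and $\SO_2(\R)$ acts transitively on each circle $\{v\in\R^2 : \norm{v}=c\}$. So it suffices to find $\lambda\in\R^+$ with $\norm{a_\lambda.e_1}=\norm{g.e_1}$; the rest of the argument is formal.

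\emph{Step 1: compute $\norm{a_\lambda.e_1}$.} By definition,
\[
a_\lambda.e_1=\Big(\tfrac{\lambda+\lambda^{-1}}{2},\tfrac{\lambda-\lambda^{-1}}{2}\Big),
\qquad
\norm{a_\lambda.e_1}^2=\tfrac{\lambda^2+\lambda^{-2}}{2}.
\]
This function of $\lambda$ is continuous on $[1,\infty)$. It equals $1$ at $\lambda=1$ and tends to $\infty$. By the intermediate value theorem, its image on $[1,\infty)$ is all of $[1,\infty)$. Since $\norm{g.e_1}\geq 1$ by hypothesis, we can choose $\lambda\geq 1$ with $\norm{a_\lambda.e_1}=\norm{g.e_1}$. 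This is the only place the hypothesis is used, and it is exactly what is needed: $\norm{a_\lambda.e_1}\geq1$ for every $\lambda$, so vectors of norm less than $1$ can never be reached.

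\emph{Step 2: pick $k$.} The vectors $a_\lambda.e_1$ and $g.e_1$ have the same Euclidean norm, so there is a rotation $k\in\SO_2(\R)$ with $k a_\lambda.e_1=g.e_1$. Then $(ka_\lambda)^{-1}g$ lies in $\SL_2(\R)$ and fixes $e_1$.

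\emph{Step 3: identify the stabilizer.} An element of $\SL_2(\R)$ fixing $e_1$ has first column $e_1$, so it has the form $\begin{bmatrix}1&r\\0&d\end{bmatrix}$. The determinant condition forces $d=1$, so the element is $u_r$ for some $r\in\R$. Hence $g=ka_\lambda u_r$.

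\emph{The second claim.} The argument is symmetric with $e_2$ in place of $e_1$. Here
\[
a_\lambda.e_2=\Big(\tfrac{\lambda-\lambda^{-1}}{2},\tfrac{\lambda+\lambda^{-1}}{2}\Big)
\]
has the same norm $\big(\tfrac{\lambda^2+\lambda^{-2}}{2}\big)^{1/2}$, and the stabilizer of $e_2$ in $\SL_2(\R)$ is $U^-=\{u^-_r\}$. The same three steps then give $g=ka_\lambda u^-_r$ whenever $\norm{g.e_2}\geq1$.

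There is no real obstacle; the only point requiring care is noticing that the norm condition $\norm{g.e_1}\geq1$ in Step 1 is both necessary and sufficient for the decomposition.
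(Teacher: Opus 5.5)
Your proof is correct and follows the same route as the paper. Both arguments reduce the decomposition to matching $g.e_1$ with $k a_\lambda.e_1$, using that $U$ (resp.\ $U^-$) is the stabilizer of $e_1$ (resp.\ $e_2$), and then use that the circle of radius $\norm{g.e_1}\geq 1$ meets the hyperbola branch $\{a_\lambda.e_1\}$. The only difference is presentation: your norm computation $\norm{a_\lambda.e_1}^2=\tfrac{\lambda^2+\lambda^{-2}}{2}$ combined with the intermediate value theorem, together with your explicit stabilizer check, spells out what the paper states geometrically.
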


\begin{proof}
    For simplicity we only prove the first part. It suffices to show that $g.e_1 = k a_{\lambda}.e_1$ for some $k\in \SO_2(\R)$ and $\lambda\in \R^+$.
    Since the orbit
    \[
    \{a_{\lambda}.e_1 ,\; \lambda\in \R^+ \}
    = \{ xe_1 +ye_2 ,\;
    x^2-y^2 =1 ,\, x>0
    \}
    \]
    is one component of a hyperbola, it suffices to find $k$ such that $k^{-1}g.e_1$ intersects with this (component of) hyperbola, which does exist since $\norm{g.e_1}\geq 1$.
\end{proof}

This lemma will be applied to $g=a_s h_t$ for certain $s,t\in \R^+$, after which we want some lower bound of the size of $r$.

\begin{lem}\label{lemma_KAU_decomposition_2}
    Given $s,t,\lambda\in \R^+$, $r\in \R$ and $k\in \SO_2(\R)$ such that $a_s h_t = k a_{\lambda} u_r$, we have  $\normm{r}+1 \geq t^{-2} $ and in particular $t \leq 11^{-1} \implies \normm{r} \geq 100$.
    Similarly, if $a_s h_t = k a_{\lambda} u^-_r$, we have  $\normm{ r } +1 \geq t^{2} $ and  $t \geq 11 \implies \normm{r} \geq 100$.
\end{lem}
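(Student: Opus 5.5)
The plan is to compare the Euclidean norms of the two columns on each side of the identity $a_s h_t = k a_\lambda u_r$. Multiplying by $k^{-1}$ does not change column norms, because $k\in\SO_2(\R)$ is an isometry. I will prove the first statement in detail; the $u^-$ case is symmetric.

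First I record that $\norm{a_s e_1}=\norm{a_s e_2}$ for every $s\in\R^+$. By the explicit formula for $a_s$ both equal
\[
N(s):=\Big(\tfrac{(s+s^{-1})^2+(s-s^{-1})^2}{4}\Big)^{1/2}.
\]
Next I apply both sides to $e_1$ and take norms. Since $h_t e_1 = t e_1$ and $u_r e_1=e_1$, this gives
\[
t\,N(s)=\norm{a_s h_t e_1}=\norm{k a_\lambda u_r e_1}=\norm{a_\lambda e_1}=N(\lambda).
\]
Then I apply both sides to $e_2$. Since $h_t e_2 = t^{-1}e_2$ and $u_r e_2 = r e_1 + e_2$, the triangle inequality together with the previous identity gives
\[
t^{-1}N(s)=\norm{a_\lambda(r e_1+e_2)}\le |r|\,\norm{a_\lambda e_1}+\norm{a_\lambda e_2}=(|r|+1)N(\lambda)=(|r|+1)\,t\,N(s).
\]
Dividing by $t\,N(s)>0$ yields $|r|+1\ge t^{-2}$. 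In particular, if $t\le 11^{-1}$ then $|r|\ge 121-1\ge 100$.

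For the second statement I use $a_s h_t = k a_\lambda u^-_r$. Here $u^-_r e_2=e_2$ and $u^-_r e_1 = e_1+r e_2$, so the roles of $e_1$ and $e_2$ are swapped. Applying both sides to $e_2$ gives $t^{-1}N(s)=N(\lambda)$. Applying both sides to $e_1$ gives
\[
t\,N(s)\le (|r|+1)N(\lambda)=(|r|+1)\,t^{-1}N(s).
\]
Hence $|r|+1\ge t^{2}$, and $t\ge 11$ forces $|r|\ge 100$.

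There is no real obstacle here. The only point needing care is the observation $\norm{a_\lambda e_1}=\norm{a_\lambda e_2}$, which is what lets the triangle-inequality bound be expressed through the single quantity $N(\lambda)$.
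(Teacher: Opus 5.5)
Your proof is correct and follows essentially the same route as the paper: evaluate both sides on $e_1$ and $e_2$, use that $k\in\SO_2(\R)$ preserves norms, apply the triangle inequality, and use $\norm{a_s e_1}=\norm{a_s e_2}$ to get $t^{-2}\le |r|+1$. Your explicit treatment of the $u^-_r$ case, with the roles of $e_1$ and $e_2$ swapped, is also correct; the paper leaves that case to the reader.
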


\begin{proof}
    Again for simplicity we only present the proof of the first part.
    Let $a_s h_t = k a_{\lambda} u_r$ act on $e_1$, $e_2$, we get
    \begin{equation*}
        \begin{aligned}
            &
            t \norm{a_s .e_1} = \norm{a_{\lambda}.e_1 },
            \\
            &
            t^{-1}\norm{a_s .e_2} = \norm{ka_{\lambda}u_r.e_2}
            = \norm{ra_{\lambda}.e_1 + a_{\lambda}.e_2}
            \leq (\normm{r}+1)\norm{a_{\lambda}.e_1}
            \\
            &\implies
            t^{-2} \norm{a_s .e_2} \leq (\normm{r}+1) \norm{a_s.e_1} 
            \\
            &\implies
            t^{-2} \leq \normm{r} +1
        \end{aligned}
    \end{equation*}
    where we have used $ \norm{a_s.e_1}= \norm{a_s.e_2}$.
\end{proof}

\subsubsection{Thickening}
The purpose of this subsection is to make preparation for the thickening argument.
Here is the rough idea when $l_1+l_2=1$. Otherwise one could think of the $s,t ,\lambda, r$ appearing below are $s_i,t_i,\lambda_i,r_i$ for some $i\in \{1,...,l_1+l_2\}$.

By writing $a_s h_t= k a_{\lambda} u_r$, we will break its action on $ \rmm_z$ or $\rmm_{\ep_0}$ into two steps. First one translates by $u_r$, after which (if $r$ is large) one gets a rather long embedded manifold in the direction $u_rh_{\R^+}u_{r}^{-1}$  that stays inside a bounded region of  $Y$.  To apply mixing, we thicken the manifold  by the central and stable direction with respect to $a_{\lambda}$-action. To make this work, we note that the central-stable direction stays away from the direction of the submanifold. 

More notations have to be introduced. Let $k_{45^{\circ}} : = 
\frac{1}{\sqrt{2}}
\begin{bmatrix}
    1 & -1\\
    1 & 1
\end{bmatrix}
$.  For $\star, \varstar \in \{+, -\}$,
\begin{itemize}
    \item $\displaystyle 
    a_t = k_{45^{\circ}} h_t  k_{45^{\circ}} ^{-1}
    =\frac{1}{2}
    \begin{bmatrix}
    t+t^{-1} & t-t^{-1}  \\
    t-t^{-1} & t+t^{-1}
\end{bmatrix}
    $ and define $v_r^{\star}:=k_{45^{\circ}} u^{\star}_r  k_{45^{\circ}} ^{-1} $.
    \item 
    $\mathfrak{U}^{0+}:=
    \left\{
      \begin{bmatrix}
          x & y \\
          0 & -x
      \end{bmatrix} \midd x,y\in \R
    \right\}
    $, $\mathfrak{U}^{0-}:=
    \left\{
      \begin{bmatrix}
          x & 0 \\
          y & -x
      \end{bmatrix} \midd x,y\in \R
    \right\}
    $
    and $\mathfrak{V}^{0\star}:= k_{45^{\circ}} \mathfrak{U}^{0\star}  k_{45^{\circ}} ^{-1}$.
\end{itemize}
Thus $\mathfrak{V}^{0\star}$ is the stable/unstable direction of the $a_{\lambda}$-action.
Furthermore, for $r\in \R$ satisfying $\normm{r}\geq 100$, we define  
\begin{itemize}
    \item $ \displaystyle
    w^{\star}_r:= \frac{1}{ \lfloor |r| \rfloor  } u^{\star}_r 
       \begin{bmatrix}
              1  & 0 \\
               0 &  -1
        \end{bmatrix}
      (u^{\star}_r)^{-1} $. Concretely, 
      $
      w^+_r=  \begin{bmatrix}
         \frac{1}{  \lfloor |r| \rfloor}  &  -\frac{2r}{  \lfloor |r| \rfloor}  \\
        0 &  -\frac{1}{  \lfloor |r| \rfloor}
     \end{bmatrix}
      $ and $
      w^-_r=  \begin{bmatrix}
         \frac{1}{  \lfloor |r| \rfloor}  & 0  \\
         -\frac{2r}{  \lfloor |r| \rfloor} &  -\frac{1}{  \lfloor |r| \rfloor}
     \end{bmatrix}
      $.
\end{itemize}

Let $d(-,-)$ be the distance function on $\mathfrak{sl}_2(\R)$ induced from the Euclidean metric: $(A,B)\mapsto \tr(AB^{\tr})$.
An explicit calculation yields the following:
\begin{lem}\label{lemma_wave_front}
    For every real number $r$ satisfying $\normm{r}\geq 100$ and every $\star, \varstar \in \{+,-\}$, we have 
    $d(w_r^{\star}, \mathfrak{V}^{0\varstar}) \geq 0.1$.
\end{lem}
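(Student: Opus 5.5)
The statement is a finite-dimensional computation, so I will prove it by explicit parametrization.

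\emph{Parametrize the subspaces.} Since $k_{45^{\circ}}$ is orthogonal, conjugation by it is an isometry for the metric $(A,B)\mapsto \tr(AB^{\tr})$. Writing $\begin{bmatrix} x & y\\ z & -x\end{bmatrix}$ for a general element of $\mathfrak{sl}_2(\R)$, a direct computation gives
\[
\mathfrak{V}^{0+} = \{ x(e_{12}+e_{21}) + y\, k_{45^{\circ}} e_{12} k_{45^{\circ}}^{-1} \}, \qquad
k_{45^{\circ}} e_{12} k_{45^{\circ}}^{-1} = \tfrac12\begin{bmatrix} -1 & 1\\ -1 & 1\end{bmatrix}.
\]
It is therefore cleaner to describe each $\mathfrak{V}^{0\varstar}$ by a single linear equation. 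The orthogonal complement of $\mathfrak{V}^{0+}$ in $\mathfrak{sl}_2(\R)$ is spanned by $k_{45^{\circ}}e_{21}k_{45^{\circ}}^{-1}=\tfrac12\begin{bmatrix}-1&-1\\1&1\end{bmatrix}$. Hence $\mathfrak{V}^{0+}=\{\,x+z-y-x... \,\}$ reduces to the condition $-2x - y + z = 0$, where $-x\cdot1$ comes from the $(1,1)$ entry and $-(-x)\cdot 1=x$... Rather than trust this mental arithmetic, I will compute the unit normal $n_\varstar$ honestly from the explicit matrix. Up to sign it should be $n_\pm=\tfrac12\begin{bmatrix}-1&\mp1\\ \pm1&1\end{bmatrix}$, which has norm $1$.

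\emph{Compute the distance.} Since each $\mathfrak{V}^{0\varstar}$ is a hyperplane with unit normal $n_\varstar$,
\[
d(w,\mathfrak{V}^{0\varstar})=|\tr(w\, n_\varstar^{\tr})|.
\]
For $w=w^{+}_r$, write $N=\lfloor |r|\rfloor$. The entries are $x=1/N$, $y=-2r/N$, $z=0$. Pairing with $n_\pm$ gives
\[
\tfrac12\left|\,-\tfrac1N \mp \tfrac{-2r}{N} - \tfrac1N\,\right| = \left|\tfrac{\pm r - 1}{N}\right|,
\]
where the last summand comes from the $(2,2)$ entry $-x$ times $1$.

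\emph{Bound it using $|r|\ge 100$.} We have $N\le|r|$ and $|\pm r -1|\ge |r|-1$. Therefore
\[
d(w^+_r,\mathfrak{V}^{0\varstar}) \ge \frac{|r|-1}{|r|}\ge 0.99 \ge 0.1.
\]
The case $w^-_r$ is symmetric: the $y$-entry is replaced by $z=-2r/N$, and the same pairing gives $|r\pm1|/N$ up to sign conventions, which again is at least $0.99$.

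\emph{Main obstacle.} The only real risk is the sign and normalization bookkeeping in identifying $n_\varstar$. I will verify it by checking that $n_\varstar$ is orthogonal to the two spanning matrices listed above. Even with a sign error the conclusion survives: the generic bound is $|r\pm1|/N$ or $|2r\pm 2|/(2N)$, and either exceeds $0.1$ by a wide margin when $|r|\ge100$.
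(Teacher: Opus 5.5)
Your argument is correct, but it takes a different route from the paper's. You identify the orthogonal complement of each $\mathfrak{V}^{0\varstar}$ inside $\mathfrak{sl}_2(\R)$ by conjugating the normals $e_{21}$ and $e_{12}$ of $\mathfrak{U}^{0+}$ and $\mathfrak{U}^{0-}$ by the orthogonal matrix $k_{45^{\circ}}$. Since $w^{\star}_r$ is traceless, this gives the exact value $d(w^{\star}_r,\mathfrak{V}^{0\varstar})=|r\pm1|/\lfloor |r|\rfloor\ge 0.99$.

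Your normals $n_+=\tfrac12\begin{bmatrix}-1&-1\\1&1\end{bmatrix}$ and $n_-=\tfrac12\begin{bmatrix}-1&1\\-1&1\end{bmatrix}$ are right; for instance, $n_+$ is orthogonal to both spanning vectors $e_{12}+e_{21}$ and $\tfrac12\begin{bmatrix}-1&1\\-1&1\end{bmatrix}$ of $\mathfrak{V}^{0+}$. The equation $-2x-y+z=0$ you half-derived is also correct. In a write-up, though, you should delete the garbled sentence containing ``$x+z-y-x\ldots$'' and the hedging about sign errors, and just state the normals and the orthogonality check.

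The paper treats only $\star=\varstar=+$ with $r>0$. It approximates $w^+_r$ by the fixed matrix $\begin{bmatrix}0&-2\\0&0\end{bmatrix}$, with error at most $\sqrt6/99$. It then bounds the distance from $\begin{bmatrix}0&-1\\0&0\end{bmatrix}$ to $\mathfrak{V}^{0+}$ from below by a sum-of-squares estimate based on the identity $-(2x-y)-2y+(2x+y+1)=1$. The triangle inequality finishes the argument, and the other cases are left as similar.

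That perturbative route avoids computing the normal. As printed, however, its intermediate step should say $\max$ rather than $\min$, which gives a bound $\ge 1/4$; this is harmless for the conclusion. Your method shows the true value there is $1/2$.

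Your computation gives an exact and sharper bound, and it covers all four sign choices and both signs of $r$ in one stroke. It is also insensitive to the sign of the off-diagonal entry of $w^-_r$, since only $|r\pm 1|$ enters. Direct conjugation $u^-_r\,\mathrm{diag}(1,-1)\,(u^-_r)^{-1}$ actually gives $+2r/\lfloor|r|\rfloor$ in the lower-left corner, not the $-2r/\lfloor|r|\rfloor$ written in the paper.
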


\begin{proof}
For simplicity we only prove the case when $\star= \varstar=+$ and $r>0$. The other cases can be handled in a similar way.

First we note that 
\begin{equation*}
\begin{aligned}
     d \Big(w_r^+ \,,\,
    \begin{bmatrix}
    0 & -2 \\
    0 & 0
    \end{bmatrix}
     \Big) = \sqrt{
     2 ( \lfloor \normm{r} \rfloor )^{-2}
     + 2^2 (  \frac{r}{\lfloor \normm{r} \rfloor} -1
     )^2
     } \leq \sqrt{
     \frac{2}{99^2}  + \frac{4}{99^2}
     } =\sqrt{6}/99 < 0.1.
\end{aligned}
\end{equation*}
Every element in $\mathfrak{V}^{0+}$ takes the form
\begin{equation*}
\begin{aligned}
    \begin{bmatrix}
    1 & -1 \\
    1 & 1
    \end{bmatrix}
    \begin{bmatrix}
    x & y \\
    0 & -x
    \end{bmatrix}
    \begin{bmatrix}
    1 & 1 \\
    -1 & 1
    \end{bmatrix}
    =
     \begin{bmatrix}
    -y & 2x+y \\
    2x-y &y
    \end{bmatrix}
\end{aligned}
\end{equation*}
for some $x,y\in\R$.
Note that 
\[
   -(2x-y) - 2 y + (2x+y+1) =1 \implies \min \{ 
   |2x-y|, |y|, |2x+y+1|
   \} \geq \frac{1}{6}.
\]
Therefore, for every $x,y \in \R$,
\begin{equation*}
\begin{aligned}
      d \Big(
    \begin{bmatrix}
    0 & -1 \\
    0 & 0
    \end{bmatrix}\,,\,
    \begin{bmatrix}
    -y & 2x+y \\
    2x-y &y
    \end{bmatrix}
     \Big) =\sqrt{
     (2x-y)^2 +2y^2 +(2x+y+1)^2
     } \geq 1/6,
\end{aligned}
\end{equation*}
and consequently,
\begin{equation*}
\begin{aligned}
       d \Big(
    \begin{bmatrix}
    0 & -2 \\
    0 & 0
    \end{bmatrix}\,,\,
    \mathfrak{V}^{0+}
     \Big) \geq 1/3 
     \implies 
      d \big(
    w_r^+\,,\,
    \mathfrak{V}^{0+}
     \big)  \geq 1/3-0.1 \geq 0.1.
\end{aligned}
\end{equation*}
So we are done.
\end{proof}

\subsubsection{Local coordinates}\label{subsubsection_local_coordinates}

For $r\in \R$, $\star, \varstar \in \{+ ,-\}$, $\eta\in \C$ and $\ep_0>0$ sufficiently small, define
\begin{itemize}
    \item $
     \displaystyle
    h^{r,\varstar}_{e^{\eta}} := 
     \begin{cases}
          \exp( \eta \cdot w_r^{\varstar})  = u_{r}^{\varstar } h_{ \eta/ { \left\lfloor  \normm{r} \right\rfloor
          }}    ( u_{r}^{\varstar })^{-1}
          & \text{ if }      \normm{r}\geq 100
        \\
         h_{e^{\eta}} & \text{ if } \normm{r} <100
    \end{cases}$;
    \item local charts about the identity
    \begin{equation*}
        \begin{aligned}
             \Phi_{r,\R}^{\star,\varstar} : (-\ep_{0},\ep_0)^3 &\to \SL_2(\R) \\
             (\theta, \gamma,\eta) &
             \mapsto a_{e^{\theta}} \cdot v_{\gamma}^{\star} \cdot h^{r,\varstar}_{e^{\eta}}   ,
             \\
             \Phi_{r,\C}^{\star,\varstar} : ( (-\ep_{0},\ep_0)+i (-\ep_{0},\ep_0))^3 &\to \SL_2(\C) \\
             (\theta, \gamma,\eta) &
             \mapsto a_{e^{\theta}} \cdot v_{\gamma}^{\star} \cdot h^{r,\varstar}_{e^{\eta}}   ;
        \end{aligned}
    \end{equation*}
    \item For $j\in \{1,...,l_1+l_2\}$, we let $\Phi_{r,j}^{\star,\varstar}$ be either $\Phi_{r,\R}^{\star,\varstar}$ or $\Phi_{r,\C}^{\star,\varstar}$ depending on whether the corresponding  embedding is real or complex.
    Putting them together, define for $\vec{r}= (r_1,...,r_{l_1+l_1}) \in \R^{l_1+l_2}$,
    \[
    \Phi^{\vec{\star},\vec{\varstar}}_{\vec{r}}:=
    \prod_{j=1}^{l_1+l_2} \Phi_{r,j}^{\star,\varstar}:
    (-\ep_{0},\ep_0)^l \times (-\ep_{0},\ep_0)^l \times (-\ep_{0},\ep_0)^l \to G.
    \]
    For $0<\ep\leq \ep_0$, let $ \calN^{\vec{\star},\vec{\varstar}}_{\vec{r}}(\ep)$ be the image of  $(-\ep,\ep)^l \times (-\ep,\ep)^l \times (-\ep_{0},\ep_0)^l $
    where the first two coordinates put restrictions on the size of $\vec{\theta}$ and $\vec{\gamma}$.
\end{itemize}

Let $\scrC_0$ be the bounded subset of $Y$ defined by
\[
   \scrC_0 := (U \cup U^- ) \cdot \{ h_{\vec{t }} \,  ,\; 11^{-1} \leq |t_j| \leq 11 ,\; \forall\, j  \} \cdot H^{(1)} \Gamma/\Gamma .
\]
The following is deduced from Lemma \ref{lemma_wave_front} and the wavefront property. The proof is omitted.
\begin{lem}
    For  $\ep_0, \eta_0 \in (0,0.5)$ sufficiently small, the following holds.
    For every $\vec{r} \in \R^{l_1+l_2}$ , $\vec{\star},\vec{\varstar}\in \{+,-\}^{l_1+l_2}$ and $z\in \scrC_0$, the map
    \begin{equation*}
        \begin{aligned}
            (-\ep_{0},\ep_0)^l \times (-\ep_{0},\ep_0)^l \times (-\ep_{0},\ep_0)^l &\to Y=G/\Gamma
            \\
            (\vec{\theta}, \vec{\gamma}, \vec{\eta})
            & \mapsto
            \Phi^{\vec{\star},\vec{\varstar}}_{\vec{r}}(\vec{\theta}, \vec{\gamma}, \vec{\eta}).z
        \end{aligned}
    \end{equation*}
    is a diffeomorphism onto some open neighborhood $ \calN^{\vec{\star},\vec{\varstar}}_{\vec{r}} (z) $ of $z$. 
\end{lem}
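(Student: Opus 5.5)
The plan is to reduce the statement to a uniform local statement about a single factor $\SL_2(\R)$ or $\SL_2(\C)$, and then appeal to the inverse function theorem with constants that do not depend on $r$ or $z$. Since $\Phi^{\vec{\star},\vec{\varstar}}_{\vec{r}}$ is a product over the places $j$, and the coordinates commute across factors, it suffices to understand each $\Phi^{\star,\varstar}_{r,j}$ separately.

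\textbf{Step 1: uniform local diffeomorphism on each factor.} The differential of $\Phi^{\star,\varstar}_{r,j}$ at the origin sends the three coordinate directions to
\[
\log a \text{ (diagonal after conjugation by } k_{45^{\circ}}),\qquad \mathfrak{V}^{\star}\text{-direction},\qquad w^{\varstar}_r .
\]
For $\normm{r}<100$, $w^{\varstar}_r$ is replaced by $\diag(1,-1)$, and the three vectors span a compact family of bases.

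For $\normm{r}\geq 100$, the first two vectors span $\mathfrak{V}^{0\star}$. Lemma \ref{lemma_wave_front} gives $d(w_r^{\varstar},\mathfrak{V}^{0\star})\geq 0.1$, while $\norm{w_r^{\varstar}}$ stays bounded (by about $3$). Hence the determinant of the differential is bounded below uniformly in $r$.

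The second derivatives of $\Phi$ on the fixed box are also uniformly bounded. The $\eta$-dependence is $\exp(\eta w_r^{\varstar})$ with $w_r^{\varstar}$ in a compact set, and the other two factors do not depend on $r$. The $r$-dependence therefore enters only through a compact parameter set: $w_r^{\varstar}$ lies in a compact subset of $\mathfrak{sl}_2$, and the finitely many cases $\normm{r}<100$ are covered separately. A quantitative inverse function theorem then yields a uniform $\ep_0$ such that $\Phi^{\star,\varstar}_{r,j}$ is a diffeomorphism from the box onto its image, and this image lies in a fixed small neighborhood $B$ of $\id$.

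One caveat concerns the $\eta$-box. The box is $(-\ep_0,\ep_0)^l$ in every coordinate, so the $\eta$-direction is also small, of size $\ep_0$ in $\norm{w}$-scaled units. This matches the definition, since $h^{r,\varstar}_{e^{\eta}}=\exp(\eta w^{\varstar}_r)$ with $\normm{\eta}<\ep_0$.

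\textbf{Step 2: injectivity into $Y$.} It remains to show that $g\mapsto g.z$ is injective on $B$ for every $z\in\scrC_0$. This follows once $B^{-1}B$ lies inside an injectivity-radius ball at $z$.

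Here $\scrC_0$ is not compact, because $U\cup U^-$ is unbounded. However, $U\Gamma/\Gamma$ is compact: $U\cap\Gamma$ is a lattice in $U$ since $\Gamma$ is arithmetic. The same holds for $U^-$, and $H^{(1)}\Gamma/\Gamma$ is compact by Dirichlet's unit theorem. The $h_{\vec t}$ range over a compact set.

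Hence $\scrC_0$ is contained in a finite union of sets of the form $g_0 U H^{(1)}\Gamma/\Gamma$ with $g_0$ in a compact set, together with the analogous sets for $U^-$. I expect these to have closures that are compact in $Y$, hence a positive injectivity radius $\eta_0$. This is where the hypothesis on $\eta_0$ is used: I shrink $\ep_0$ so that $B^{-1}B$ lies in the $\eta_0$-ball.

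The main obstacle is exactly this compactness of $\overline{\scrC_0}$. I would check it through the concrete structure: $U H^{(1)}\Gamma/\Gamma$ equals $H^{(1)}U\Gamma/\Gamma$, which is the image of the compact set $[H^{(1)}]$-lifts times $U\Gamma/\Gamma$ under a continuous map. Everything else is uniform by Step 1.

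Finally, openness of the image and smoothness of the inverse follow from $\Phi$ being a local diffeomorphism composed with the local isometry $G\to Y$ near $z$.
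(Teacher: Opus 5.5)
Your proposal is correct. It follows the route the paper indicates; the paper omits the proof, saying only that it follows from Lemma \ref{lemma_wave_front} and the wavefront property. Uniform transversality of $w^{\varstar}_r$ to $\mathfrak{V}^{0\star}$ gives a uniform inverse-function-theorem box for $\Phi^{\vec{\star},\vec{\varstar}}_{\vec r}$, and compactness of $\scrC_0$, which you rightly verify via $U h_{\vec t} H^{(1)}\Gamma = h_{\vec t} H^{(1)} U\Gamma$, gives a uniform injectivity radius. The points you leave implicit are routine: transversality of $\diag(1,-1)$ to $\mathfrak{V}^{0\pm}$ when $\normm{r}<100$, and the complexified statement at complex places, where $r_j$ is real.
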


We fix, till the end of the proof of Proposition \ref{proposition_effective_equidistribution_generic_quadric},  $C_1>1$ such that for every $\vec{r} = (r_1,...,r_{l_1+l_2}) \in \R^{l_1+l_2}$, $\star, \varstar \in \{+, - \}^{l_1+l_2}$ and $\vec{\theta}, \vec{\gamma},\vec{\eta} \in (-\ep_0,\ep_0)^l$, one has
 \begin{equation}\label{equation_bound_distance}
\begin{aligned}
      d_{G}\left(   \id \,,  \Phi^{\vec{\star},\vec{\varstar}}_{\vec{r}}(\vec{\theta}, \vec{\gamma}, \vec{\eta})
      \right) \leq C_1 \max \left\{ \vert{\theta_j}\vert, \,\vert{\gamma_j} \vert ,\, \vert{\eta_j} \vert,\; j=1,...,l_1+l_2 
      \right\}.
\end{aligned}
\end{equation}

Since $\rmm_G$ is right $G$-invariant, we find $\rho^{\vec{\star},\vec{\varstar}}_{\vec{r}}: (-\ep_{0},\ep_0)^l \times (-\ep_{0},\ep_0)^l \to [0,+\infty)$ such that 
\begin{equation}\label{equation_density_function_volume}
\left( \Phi^{\vec{\star},\vec{\varstar}}_{\vec{r}}\right)_*
\left(
\rho^{\vec{\star},\vec{\varstar}}_{\vec{r}}(\vec{\theta},\vec{\gamma}) \diff \vec{\theta} \diff \vec{\gamma} \diff \vec{\eta}  
\right) = \rmm_G \big\vert_{\calN^{\vec{\star},\vec{\varstar}}_{\vec{r}}}.
\end{equation}
We take $C_2>1$ such that $C_2^{-1}\leq \normm{\rho^{\vec{\star},\vec{\varstar}}_{\vec{r}}} \leq C_2$ on $(-\ep_0,\ep_0)^{2l}$.

\subsubsection{Approximation by smooth functions}\label{subsubsection_approximate_by_smooth_functions}

For $0<\ep<\ep_0$, one can find  smooth functions $ \beta_{\ep}, \, \beta_{\ep}^{\ep_0}: \R\to [0,1]$ enjoying the following properties:

First, $\supp(\beta_{\ep}) \subset (-\ep,\ep)$,  $\supp(\beta^{\ep_0}_{\ep}) \subset (-\ep_{0},\ep_0)$.

Next, by abuse of notation, define for $\vec{s}\in \R^l$, 
\[
   \beta_{{\ep}}(\vec{s}) := \prod_{i=1}^l
   \beta_{\ep}(s_i),\quad
   \beta^{\ep_0}_{{\ep}}(\vec{s}) := \prod_{i=1}^l
   \beta^{\ep_0}_{\ep}(s_i),
\]
then
\[
    \normm{
    \frac{1}{ (2\ep_0)^l(2 \ep)^{2l}  }
    \int_{(-\ep_0,\ep_0)^l} \int_{(-\ep,\ep)^{2l}} \beta_{\vec{\ep}}(\vec\theta) \beta_{\vec{\ep}}(\vec\gamma) \beta_{\vec{\ep}}^{\ep_0}(\vec{\eta})
    \,
    \diff \vec{\theta} \diff \vec{\gamma} \diff \vec{\eta}
    -  1
    } \leq \ep.
\]
Moreover,
let  $\beta^{\vec{\star},\vec{\varstar}}_{\vec{r},z,\ep}:Y\to \R$ be the function such that
$\supp(\beta^{\vec{\star},\vec{\varstar}}_{\vec{r},z,\ep}) \subset \calN^{\vec{\star},\vec{\varstar}}_{\vec{r}}(z)$ defined by
\begin{equation*}
    \beta^{\vec{\star},\vec{\varstar}}_{\vec{r},z,\vec{\ep}} \left(
    \Phi^{\vec{\star},\vec{\varstar}}_{\vec{r}}
    (\vec\theta,\vec\gamma,\vec\eta).z
    \right)
   := \beta_{\vec{\ep}}(\vec\theta) \beta_{\vec{\ep}}(\vec\gamma) \beta_{\vec{\ep}}^{\ep_0}(\vec\eta).
\end{equation*}
Finally we fix, until the end of the proof of Proposition \ref{proposition_effective_equidistribution_generic_quadric},
$\kappa_1>1$ such that for all  $z\in \scrC_0$,  $0<\ep<\ep_0$ and $\bmr$,
   \[
      \calS\left(\beta^{\vec{\star},\vec{\varstar}}_{\vec{r},z,\ep}  \right ) \leq \ep^{-\kappa_1}.
   \]

\subsubsection{Proof of Proposition \ref{proposition_effective_equidistribution_generic_quadric}}
Let $\delta \in (0,0.5)$, $\scrC\subset Y$, $(x_1,x_2,a)\in \calN_0^{\circ}$ and $\varphi$ be given.
By assumption $x_{2}^{1/\delta} \leq x_1 \leq x_2^{\delta}$. 
Let 
\[
 t := \frac{1}{
 x_1^{ 1/l  }   \norm{a.e_1}_{k_{\infty}} ^{ 1/l }
 }=
 x_2^{ 1/l }   \norm{a.e_2}_{k_{\infty}} ^{ 1/ l  }.
\]
Then
\begin{equation*}
\begin{aligned}
      x_1 = t^{-l} \norm{a.e_1}_{k_{\infty}}^{-1}=  \norm{ah^{\Delta}_t.e_1}_{k_{\infty}}^{-1}
      , \quad
      x_2 = t^{l} \norm{a.e_2}_{k_{\infty}}^{-1}= \norm{ah^{\Delta}_t.e_2}_{k_{\infty}}^{-1}
\end{aligned}
\end{equation*}
and
\begin{equation}\label{equation_f_varphi}
  f_{\varphi}(x_1,x_2,a ) = 
   \frac{1}{ (2\ep_0)^l } \int_{[H^{(1)}]} \int_{ (-\ep_{0},\ep_0)^l} \varphi
   \left( a_{\vec{s}} h_{\vec{t}} h_{e^{\vec{\xi}}}.z
   \right) \, \diff \vec{\xi} \diff \rmm_{[H^{(1)}]}(z) .
\end{equation}
Assume without loss of generality that $x_1\leq x_2$ or equivalently $t\geq 1$.

We want to make the situation more ``balanced''.
Write $a= a_{\vec{s}}$ with $\vec{s} = (s_1,...,s_{l_1+l_2}) \in (\R^+)^{l_1+l_2} \subset k_{\infty}$
and define $\vec{t}:=(t_1,...,t_{l_1+l_2})
 \in (\R^+)^{l_1+l_2} \subset k_{\infty} $ by
\[
   \frac{\log t_j }{\log \norm{a_{s_j }.e_1}}
   = \frac{\log (t^l)}{
   \log \norm{a_{\vec{s}}. e_1}_{k_{\infty}}
   }, \quad  j =1,...,l_1+ l_2.
\]
If $\delta_1 > 0$ is defined by $t^l = \norm{a_ {\vec{s}} . e_1}^{\delta_1}_{k_{\infty} }$, then $t_j = \norm{a_{s_j }.e_1}^{\delta_1}\geq 1$ for every $j =1,...,l_1+l_2$.
Thus, $\norm{\vec{t}} _{k_{\infty}} = t^l$ and $h^{\Delta}_t = h_{\vec{t}} \cdot h^{(1)}$ for some $h^{(1)} \in H^{(1)}$. 
The condition $x_{2}^{1/\delta} \leq x_1 \leq x_2^{\delta}$ is equivalent to 
\begin{equation*}
\begin{aligned}
    \norm{a.e_2}_{k_{\infty}}^{\delta(1-\delta_1)} \leq \norm{a.e_1}_{k_{\infty}}^{1+\delta_1} \leq \norm{ a.e_2 }_{k_{\infty}}^{\delta^{-1}(1-\delta_1)  }.
\end{aligned}
\end{equation*}
We may assume $a \neq \id$, in which case $  \norm{a.e_1}_{k_{\infty}} =   \norm{a.e_2}_{k_{\infty}} >1$. So the above is equivalent to
\begin{equation*}
\begin{aligned}
    \delta_1 \leq \frac{1-\delta}{1+\delta}.
\end{aligned}
\end{equation*}

Take $j\in \{1,..., l_1+l_2\}$. 
The ways of thickening are different depending on $\normm{ t_j } >11 $ or not.

\textit{Case 1, $\normm{t_j} \leq 11 $.}
We formally set $r_j:=1$, $\lambda_j:= s_j$ and for $\tau_j \in (-\ep_0,\ep_0)$ (if the place is real),
\begin{equation*}
     g_j:= a_{\lambda_j},\quad
     M_j(\tau_j ):=h_{e^{\tau_j}}= h^{r_j-}_{e^{\tau_j}}, \quad
     B_{j,0}   :=  h_{t_j}.
\end{equation*}
So
\[
     \left\{ a_{s_j} h_{t_j} h_{e^{\xi_j}}  ,\; \xi_j \in (-\ep_0,\ep_0) \right\}
    =
    \left\{ g_j M_j(\tau_j) B_{j,0}  ,\; \tau_j \in (-\ep_0,\ep_0) \right\}.
\]
If the place is complex, one replaces $(-\ep_0,\ep_0)$ by $(-\ep_0,\ep_0)+ i(-\ep_0,\ep_0)$ instead.

\textit{Case 2, $\normm{t_j} >11 $.}
Note that 
\[
 \norm{a_{s_j} h_{t_j}.e_2 
 } = t_j^{-1}\norm{a_{s_j}.e_2 } = \norm{a_{s_j}.e_2}^{1-\delta_1} \geq 1.
\]
Apply Lemma \ref{lemma_KAU_decomposition_1} and \ref{lemma_KAU_decomposition_2} to write
\[
   a_{s_j} h_{t_j} = b_j a_{\lambda_j} u^-_{r_j},\quad \text{for some } b_j \in \SO_2(\R),\;  \lambda_j\in \R^+,\; r_j\in \R,\;\normm{r_j} \geq 100.
\]
For $n_j \in \{0,1,...,\lfloor |r_j| \rfloor\}$ and $\tau_j \in (-\ep_{0},\ep_0)$ (if the place is real),
set 
\begin{equation*}
    \begin{aligned}
            g_j:= a_{\lambda_j},
           \quad
        M_j(\tau_j):= h^{r_j-}_{e^{\tau_j}},
        \quad
            B_{j,n_j }:= u^-_{r_j} \cdot h_{e^
        { (2 \ep_0  n_j + \ep_0) ({\lfloor |r_j| \rfloor })^{-1} -  \ep_0}
        }.
    \end{aligned}
\end{equation*}
Then 
\begin{equation*}\label{equation_decompose_small_pieces}
    \left\{ a_{\lambda_j} u^{-}_{r_j} h_{e^{\xi_j } } \midd  \xi_j \in (-\ep_{0},\ep_0]
    \right\}
    =
    \bigsqcup_{n_j = 0 }^{\lfloor |r_j| \rfloor -1}
    \left\{ 
       g_j M_j(\tau_j)  B_{j,n_j} \midd \tau_j \in (-\ep_{0},\ep_0]
    \right\}
\end{equation*}
and the Lebesgue measure $\diff \xi_j$ becomes $\frac{1}{\lfloor |r_j| \rfloor} \sum_{n_j=0}^{\lfloor |r_j| \rfloor -1} \diff \tau_j$.
When the place is complex, $(-\ep_0,\ep_0)$ should be replaced by $(-\ep_0,\ep_0)+i(-\ep_0,\ep_0)$ and the range of $n_j$ should be $\{0,..., \lfloor |r_j| \rfloor -1\}^2$. To lighten the notations, which are already quite complicated, we spell the notations only for the real place in the discussion below with the understanding that modifications need to be made when the place is complex.

As usual, for $\vec{\tau}:= (\tau_1,...,\tau_{l_1+l_2})$ and $\vec{n}=(n_1,...,n_{l_1+l_2})$ with $0\leq n_j \leq \lfloor |r_j| \rfloor-1 $, let 
\[
   \vec{g}:= (g_1,...,g_{l_1+l_2}), \quad
   \vec{M}(\vec{\tau}):=  (M_1 (\tau_1),...,M_{l_1+l_2}(\tau_{l_1+l_2 })  ),\quad
   \vec{B}_{\vec{n}} :=  (B_{1,n_1},...,B_{l_1+l_2, n_{l_1+l_2}}).
\]
With these notations, update Eq.(\ref{equation_f_varphi})  as
\begin{equation}\label{equation_f_varphi_1}
\begin{aligned}
    &
      f_{\varphi}(x_1,x_2,a ) 
      \\
   = \,&
   \frac{1}{\prod_{j=1}^{l_1+l_2} \lfloor |r_j| \rfloor}
    \sum_{\vec{n},\; n_j =0,... \lfloor |r_j| \rfloor-1 
   }
   \int_{  [H^{(1)}]   }
   \int_{\vec{\tau}\in (-\ep_{0},\ep_0)^l}
   \varphi(  \vec{g} \vec{M}(\vec{\tau}  )\vec{B}_{\vec{n}} .z ) \, \diff \vec\tau \, \diff \rmm_{[H^{(1)}]}(z)
   \\
   =:  \,&
   \frac{1}{\prod_{j=1}^{l_1+l_2} \lfloor |r_j| \rfloor}
   \sum_{\vec{n},\; n_j =0,... \lfloor |r_j| \rfloor-1 
   }
   \int_{  [H^{(1)}]   }  \scrI(x_1,x_2,a, \vec{n} ,z ) \, \diff \rmm_{[H^{(1)}]}(z).
\end{aligned}
\end{equation}
Now we perform thickening trick by inserting something in between $g$ and $\vec{M}(\vec{\tau})$.
Several notations here are defined in section \ref{subsubsection_local_coordinates} and \ref{subsubsection_approximate_by_smooth_functions}.

Define $\vec{\star}=(\star_j)$ by $\star_j := + $ iff $\lambda_j<1$ and is $-$ otherwise.
Therefore, given $\theta_j, \gamma_j \in (-\ep,\ep)$ for some $0<\ep<\ep_0$, there exists $ \vert \gamma'_j \vert \leq \vert \gamma_j \vert$ such that
\begin{equation*}
\begin{aligned}
     g_j a_{e^{\theta_j}} v^{\star_j}_{\gamma_j}     
      = a_{e^{\theta_j}} v^{\star_j}_{\gamma'_j} g_j
\end{aligned}
\end{equation*}
where one is reminded that $g_j= a_{\lambda_j}$.
Also recall that $a_{e^{\theta_j}} v^{\star_j}_{\gamma'_j}= \Phi_{r_j,j}^{\star_j, -}(\theta_j,\gamma'_j, 0)$ (actually $r_j$ and $-$ play no role here).
As $d_G(-,-)$ is right invariant and by Eq.(\ref{equation_bound_distance}), we have
\begin{equation*}
\begin{aligned}
     &
    d_G  \big(  \vec{g}, \vec{g} a_{e^{\vec{\theta}}} v_{\vec{\gamma}}^{\vec{\star}}    \big)
    = d_G   \big( \id ,   a_{e^{\vec{\theta}}} v^{\vec{\star}}_{\vec{\gamma}'} 
     \big) =
    d_G  \big(
    \id, \Phi_{\vec{r}}^{\vec{\star}, -}( \vec{\theta}, \vec{\gamma}', 0)
    \big) \leq C_1 \ep.
\end{aligned}
\end{equation*}
Writing $z_{\vec{n}} := \vec{B}_{\vec{n}}.z$, which lives inside $\scrC_0$, for symbols appearing in Eq.(\ref{equation_f_varphi_1}) and $\vec{\theta}, \vec{\gamma} \in (-\ep,\ep)^l$, one has
\begin{equation*}
\begin{aligned}
     d_Y \Big(
        \vec{g} \vec{M}(\vec{\tau}  )  .z_{\vec{n}} \,,\, 
         \vec{g}    \Phi_{\vec{r}}^{\vec{\star},-}(\vec{\theta}, \vec{\gamma}, \vec{\tau})   .z_{\vec{n}}
     \Big) 
     \leq d_G
     \Big(
        \vec{g} \vec{M}(\vec{\tau}  )  \,,\, 
         \vec{g}   a_{e^{\vec{\theta}}} v^{\star}_{\vec{\gamma}} \vec{M}(\vec{\tau} )
     \Big) 
     \leq C_1 \ep.
\end{aligned}
\end{equation*}
Inserting into Eq.(\ref{equation_f_varphi_1})  and integrating over $ \frac{ \rho_{\vec{r}}^{\vec{\star}, -} (\vec{\theta}, \vec{\gamma})}{  \rmm_G(\calN_{\vec{r}}^{\vec{\star}, -}(\ep)  )
}   \diff \vec{\theta} \diff \vec{\gamma} $, we get (recall that by Eq.(\ref{equation_density_function_volume}), $\rho_{\vec{r}}^{\vec{\star}, -} $ is the density function of the Haar measure in local coordinates)
\begin{equation*}
\begin{aligned}
       &\scrI (x_1,x_2,a, \vec{n},z ) 
        \\
   = \,&
   \int_{\vec{\tau}\in (-\ep_{0},\ep_0)^l} \int_{(-\ep,\ep)^{2l}}
    \varphi  \left(      \vec{g}    \Phi_{\vec{r}}^{\vec{\star},-}(\vec{\theta}, \vec{\gamma}, \vec{\tau})    .z_{\vec{n}}
    \right) \, \frac{  \rho_{\vec{r}}^{\vec{\star}, -} (\vec{\theta}, \vec{\gamma})}{  \rmm_G(\calN_{\vec{r}}^{\vec{\star}, -}(\ep)  )
}   \diff \vec{\theta} \diff \vec{\gamma}  \diff \vec\tau  + \bmO(C_1\ep \Lip(\varphi))
\\
  =\,&
   \int_{  \calN_{\vec{r}}^{\vec{\star}, -}(\ep)  .z_{\vec{n}}
   }
    \varphi(      \vec{g}  . y
    ) \, \frac{ 1  }{  \rmm_G(\calN_{\vec{r}}^{\vec{\star}, -}(\ep)  )
}  \diff \rmm_Y(y) + \bmO(C_1\ep \Lip(\varphi))
\\
=\,&
    \frac{ 1  }{  \rmm_G(\calN_{\vec{r}}^{\vec{\star}, -}(\ep)  ) }
     \int_{ Y
   }
    \varphi(      \vec{g}  . y
    )
    \beta_{\vec{r},z_{\vec{n}}, \ep  }^{\vec{\star}, -} (y)
     \,
   \diff \rmm_Y(y) + \bmO((C_1+C_2^2) \ep \Lip(\varphi)).
\end{aligned}
\end{equation*}
Let $\delta_2$ be the $\delta $ appearing in Theorem \ref{theorem_effective_mixing}.
We get 
\begin{equation}\label{equation_apply_mixing}
\begin{aligned}
       \scrI (x_1,x_2,a, \vec{n},z ) 
       = \int_{ Y }
      \varphi(  y ) \diff \rmm_Y(y)
      \frac{ \int_Y
        \beta_{\vec{r},z_{\vec{n}} , \ep }^{\vec{\star}, -} ( y )
      \, \diff \rmm_Y(y)
      }{     \rmm_G(\calN_{\vec{r}}^{\vec{\star}, -}(\ep)  )
      } + \frac{ \ep^{-\kappa_1} \calS(\varphi)  }{
       \rmm_G(\calN_{\vec{r}}^{\vec{\star}, -}(\ep)   )
      }
      (\norm{\vec{g}.e_1}_{k_{\infty}} \norm{\vec{g}.e_2}_{k_{\infty} }) ^{-\delta_2}.
\end{aligned}
\end{equation}
Using section \ref{subsubsection_approximate_by_smooth_functions}, one can check that 
\begin{equation*}
\begin{aligned}
       &
        \frac{1
      }{     \rmm_G(\calN_{\vec{r}}^{\vec{\star}, -}(\ep)  )
      }  \int_Y
        \beta_{\vec{r},z_{\vec{n}} , \ep }^{\vec{\star}, -} ( y )
      \, \diff \rmm_Y(y)
     \\
      = &\,
       \frac{1
      }{     \rmm_G(\calN_{\vec{r}}^{\vec{\star}, -}(\ep)  )
      }  \int_{(-\ep,\ep)^{2l}\times (-\ep_0,\ep_0)^l}
      \beta_{\ep}(\vec{\theta}) \beta_{\ep}(\vec{\gamma}) \beta_{\ep}^{\ep_0}(\vec{\eta})
      \rho_{\vec{r}}^{\vec{\star}, -} (\vec{\theta}, \vec{\gamma}) \, \diff \vec{\theta} \diff \vec{\gamma} \diff \vec{\tau}
      \\
      =&\,
       \frac{1
      }{     \rmm_G(\calN_{\vec{r}}^{\vec{\star}, -}(\ep)  )
      }  
       \int_{(-\ep,\ep)^{2l}\times (-\ep_0,\ep_0)^l}
       \big( 1 + \bmO((2\ep)^{2l}(2\ep_0)^l \ep) \big)  \rho_{\vec{r}}^{\vec{\star}, -} (\vec{\theta}, \vec{\gamma}) \, \diff \vec{\theta} \diff \vec{\gamma} \diff \vec{\tau}
       \\
       =&\,
      1+ \bmO(C_2^2 \ep).
\end{aligned}
\end{equation*}
Thus the first summand in Eq.(\ref{equation_apply_mixing}) is  equal to
\begin{equation*}
\begin{aligned}
      \int_{ Y }
      \varphi(  y ) \diff \rmm_Y(y) + \bmO(\Lip(\varphi)  C_2^2 \ep  ).
\end{aligned}
\end{equation*}

Next we turn to the second summand.
The definition of $\lambda_j$ was divided into two cases. In case $|t_j| \leq 11$, we have 
$    \norm{a_{\lambda_j} .e_{2}} \geq \frac{1}{11}   \norm{a_{s_j} h_{t_j}.e_{2}} $. In case $|t_j| >11$, we have 
$    \norm{a_{\lambda_j} .e_{2}} =  \norm{a_{s_j} h_{t_j}.e_{2}} $. Also note that $\norm{\vec{g}.e_1}_{k_{\infty}} = \norm{\vec{g}.e_2}_{k_{\infty}}$.
Therefore,
\begin{equation*}
\begin{aligned}
     \norm{\vec{g}.e_1}_{k_{\infty} }  \norm{\vec{g}.e_2}_{k_{\infty} }  & = \norm{\vec{g}.e_2}_{k_{\infty} } ^2 = \prod_{j=1}^{l_1+l_2} \norm{ a_{\lambda_j}.e_2}^2
     \\
     & \geq 
     \frac{1}{11^{2(l_1+l_2)}  } \prod_{j=1}^{l_1+l_2} \norm{ a_{s_j} h_{t_j}  .e_2}^2
     \\
     & =
      \frac{1}{11^{2(l_1+l_2)}  } \norm{ a h^{\Delta}_{t}  .e_2}_{k_{\infty} } ^2 = \frac{1}{11^{2(l_1+l_2)}  } x_2 ^{-2}.
\end{aligned}
\end{equation*}
So
\begin{equation*}
\begin{aligned}
     I(x_1,x_2, a, \vec{n}, z) = \int_Y \varphi(y) \,\diff\rmm_Y(y) + \bmO(\Lip(\varphi) C_2^2 \ep)
     +
     \frac{  C_2 11^{2\delta_2(l_1+l_2)  } }{(8\ep_0)^l}
     \bmO\Big(
     \frac{  \calS(\varphi) x_2^{2\delta_2}
     }{ \ep^{2l + \kappa_1}  }
     \Big).
\end{aligned}
\end{equation*}
Choose $\ep:= x_2^{\delta_2/ (2l+\kappa_1)}$, then
\begin{equation*}
\begin{aligned}
     I(x_1,x_2, a, \vec{n}, z) = \int_Y \varphi(y) \,\diff\rmm_Y(y) + C_2^2  \bmO \big(  \Lip(\varphi) x_2^{\delta_2/(2l+\kappa_1)}
     \big)
     +
     \frac{  C_2 11^{2\delta_2(l_1+l_2)  } }{(8\ep_0)^l}
     \bmO(
      \calS(\varphi) x_2^{\delta_2}
      ).
\end{aligned}
\end{equation*}
Inserting into Eq.(\ref{equation_f_varphi_1}), the proof is complete, at least when $x_2$ is sufficiently small so that $\ep$ just defined is smaller than $\ep_0$. Otherwise it is not hard to adjust the constant so that the conclusion also holds.

\appendix

\section{Volume asymptotics}\label{appendix_A}

We explain here the volume asymptotics that were needed for the proof of  Theorem \ref{theorem_smoothed_count_quadric} following \cite{Chamber-Loir_Tschinkel_2010_Igusa_integral}. Strictly speaking the case at hand is not covered by their results, but the method is similar.

Recall that $\bmX$ is defined by the affine quadric $q(x_1,x_2,x_3)=m$, which embeds into $\bmP^3$ via $(x_1,x_2,x_3) \to [1:x_1:x_2:x_3]$.
Let $\overline{\bmX}$ be its closure. Then both $\overline{\bmX}$ and the boundary $\bmD:= \overbmX \setminus \bmX$ are irreducible and smooth.
Let $\bmone_{\bmD}$ be the canonical section of the line bundle $\calL:= \calO_{\bmX}(\bmD)$. 
For each archimedean place $\nu$ of $k$, equip $\calL_{k_{\nu}}$  with a smooth metric $\norm{\cdot}_{\calL_{\nu}}$ such that 
\[
   \norm{(x_1,x_2,x_3)} = \norm{ \bmone_{\bmD} ([1:x_1:x_2:x_3])}_{\calL_{\nu}}^{-1},\quad \forall (x_1,x_2,x_3)\in k_{\nu}^3.
\]

Let $v \cdot w \in k_{\nu}^3$ be the symmetric product of two vectors in $k_{\nu}^2$.
So for some Euclidean metric on $\R^3$ or $\C^3$, $\norm{v\cdot w} = \norm{v}\norm{w}$. In particular, we can find possibly a different smooth metric  $\norm{\cdot}_{\calL'}$ on $\calL_{k_{\nu}}$ (for each archimedean place $\nu$ of $k$) such that 
\[
   T_g = \norm{g.e_1}_{k_{\infty}} \norm{g.e_2}_{k_{\infty}} =
    \prod_{i=1}^{l_1+l_2} \norm{ \bmone_{\bmD} ([1:g.x_0])}_{\calL_{\nu_i}'}^{-\epsilon_i}, \quad \forall \, g\in G.
\]
where $\epsilon_i =1$ if $i=1,...,l_1$ and $\epsilon_i=2$ if $i=l_1+1,...,l_1+l_2$.
Consider the height zeta functions
\begin{equation*}
\begin{aligned}
     Z_{\log}(\tau) &:= \int_{ G/H }  \log {T_g}  \norm{g.x_0}_{k_{\infty}}^{-\tau} \diff \rmm_{G/H}([g]),
     \\
    Z (\tau) &:= \int_{G/H } \norm{g.x_0}_{k_{\infty}}^{-\tau} \diff \rmm_{G/H}([g]).
 \end{aligned}
\end{equation*}
They converge when the real part of $\tau \in \C$ is sufficiently large and the asymptotic in Eq.(\ref{equation_smooth_count_quadratic}) can be read from their meromorphic continuations. For the sake of simplicity and also because it is not covered by  \cite{Chamber-Loir_Tschinkel_2010_Igusa_integral}, we only discuss the former one in details.

Let $\omega_X$ be the volume form inducing the Haar measure $\rmm_{G/H}$. Denote by $\normm{\omega }_{ \nu } $ the Haar measure it induces on $\bmX(k_{\nu})$. 
Recall that $\{\nu_1,...,\nu_{l_1+l_2} \}$ denotes the archimedean places of $k$ up to complex conjugation.
Thus 
\[
\rmm_{G/H} \cong \otimes_{i =1}^{l_1+l_2}  \normm{\omega }_{ \nu_i }.
\]
We want to remark here that when  $k_{\nu}$ is complex and $\omega_{\bmX}$ is locally given by $ \varphi(z_1,z_2)\diff z_1 \wedge \diff z_2$, the corresponding measure $\normm{\omega}_{\nu} $ is induced by integrating against $2 \normm{\varphi}^2 \diff x_1 \diff y_1 \diff x_2 \diff y_2 $ if $z_j = x_j + i y_j$ for $j =1,2$ (see \cite[2.1.1]{Weil_groups}).

Define for $i=1,...,l_1+l_2$, 
\begin{equation*}
\begin{aligned}
     Z_{\log,\nu_i}(\tau) &:= \int_{g.x_0 \in \bmX(k_{\nu_i})} 
     \log\left( \norm{ \bmone_{\bmD} ([1:g.x_0])}^{\epsilon_i}_{\calL_{\nu_i}'} \right)
      \norm{ \bmone_{\bmD} ([1:g.x_0])}_{\calL_{\nu_i}}^{-{\epsilon_i}\tau}\, \diff\! \normm{\omega }_{ \nu_i } (g.x_0),
     \\
    Z_{\nu_i}(\tau) &:= \int_{g.x_0 \in \bmX(k_{\nu_i})} 
      \norm{ \bmone_{\bmD} ([1:g.x_0])}_{\calL_{\nu_i}}^{- {\epsilon_i}\tau}\,\diff \!\normm{\omega}_{ \nu_i } (g.x_0) .
 \end{aligned}
\end{equation*}
We have
\begin{equation}\label{equation_height_zeta_product_places}
\begin{aligned}
       Z_{\log}(\tau) = \sum_{i=1}^{l_1+l_2}
       Z_{\log,\nu_i}(\tau) \cdot \prod_{j\neq i} Z_{\nu_j}(\tau) .
\end{aligned}
\end{equation}

\begin{lem}\label{lemma_height_zeta_meromorphic_continuation_quadratic}
Let notations be as above. $Z_{\log}(\tau)$ has the following property:
\begin{itemize}
     \item[(1)]  it has a meromorphic continuation to the half space $\mathrm{Re}(\tau)> 0.5$;
     \item[(2)]  it has  a pole of order $l_1+l_2+1$  at $\tau=1$ with positive residue and is analytic elsewhere;
     \item[(3)] its derivatives have moderate growth in (closed) vertical strips on the half space $\mathrm{Re}(\tau)> 0.5$.
\end{itemize}
The same thing is true for $Z(\tau)$ except that the order of pole is $l_1+l_2$.
\end{lem}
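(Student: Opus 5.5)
The plan is to reduce everything to the local zeta functions $Z_{\nu}(\tau)$ and $Z_{\log,\nu}(\tau)$ via the product formula Eq.(\ref{equation_height_zeta_product_places}). It then suffices to prove three local statements. For each archimedean place $\nu$, $Z_\nu(\tau)$ continues meromorphically to $\mathrm{Re}(\tau)>1/2$ with a simple pole at $\tau=1$ of positive residue and no other poles there. Similarly, $Z_{\log,\nu}(\tau)$ has a pole of order exactly $2$ at $\tau=1$, with positive leading coefficient. Finally, both have derivatives of moderate growth in vertical strips. Granting these, each summand $Z_{\log,\nu_i}\prod_{j\ne i}Z_{\nu_j}$ has a pole of order $2+(l_1+l_2-1)=l_1+l_2+1$ at $\tau=1$. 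The leading coefficients are products of positive numbers, so they add without cancellation and the residue is positive. The statement for $Z(\tau)=\prod_j Z_{\nu_j}(\tau)$, with pole order $l_1+l_2$, follows in the same way.

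For the local analysis I would follow the method of Chambert-Loir--Tschinkel: compute the Igusa-type integral near the boundary divisor $\bmD(k_\nu)$. Since $\bmD$ is smooth and irreducible, a partition of unity reduces the problem to a compact interior piece, which is entire in $\tau$, plus finitely many charts near $\bmD$. In such a chart, take a local coordinate $y$ with $\bmD=\{y=0\}$ and $\norm{\bmone_\bmD}_{\calL_\nu}=|y|\cdot(\text{smooth positive})$. Then write $\omega_X=|y|^{-\rho}\cdot(\text{smooth})\,\diff y\,\diff w$, where $\rho$ is the order of pole of $\omega_X$ along $\bmD$. For a quadric surface in $\bmP^3$, the anticanonical class of $\overbmX$ is $2$ times the hyperplane class, and $\omega_X$ has a pole of order $2$ along $\bmD$.

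The height is a fractional-linear power of $|y|$. The relevant one-variable integral is $\int_0^{c}|y|^{\epsilon(\tau-1)-1}\phi(y)\,\diff y$ in the real case; in the complex case it becomes $|y|^{2(\tau-1)-2}$ against $\diff x\,\diff y$, and the $\epsilon_i=2$ normalisation absorbs this. The integral continues meromorphically with a simple pole at $\tau=1$, residue proportional to $\int_{\bmD(k_\nu)}\phi$, which is positive. Subtracting the Taylor polynomial of $\phi$ in $y$ to first order gives continuation past $\mathrm{Re}(\tau)=1/2$, since the next term yields a pole only at $\tau=1/2$ or further left. For $Z_{\log,\nu}$ the integrand carries the extra factor $\log\norm{\bmone_\bmD}_{\calL'_\nu}^{\epsilon}=\epsilon\log|y|+(\text{smooth})$, because the two metrics differ by a smooth positive function. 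This amounts to applying $-\frac{\diff}{\diff\tau}$ to the previous integral, modulo a smooth correction, so we get a double pole with positive leading coefficient; the sign works out because $\log|y|<0$ near $\bmD$. Moderate growth in vertical strips follows from repeated integration by parts in the angular and other coordinates, or simply from $|y^{\tau}|=|y|^{\mathrm{Re}\tau}$ together with compact support of the cutoffs.

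The main obstacle is verifying the boundary geometry: checking that the Haar form $\omega_X$ on $\bmX(k_\nu)$ has exactly order-$2$ pole along $\bmD$, equivalently that the height $\norm{g.x_0}^{-\tau}$ gives abscissa $\tau=1$, and that $\bmD(k_\nu)$ has positive measure for the residue measure. I would check this concretely in the coordinates $q=x_1x_2-x_3^2$ after a $k_\nu$-change of variables, using the invariant form $\diff x_1\wedge \diff x_2/x_1$ (up to constants) and the chart $x_1=1/y$. In the real case one must also confirm that the real points of the conic $\bmD$ are nonempty, which holds because $q$ is isotropic over $k_\nu$, as follows from the split stabilizer.
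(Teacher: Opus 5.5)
Your proposal is correct and follows essentially the paper's route. You reduce via Eq.~(\ref{equation_height_zeta_product_places}), take a partition of unity with charts in which $\bmD=\{y=0\}$, use the order-two pole of the invariant form, and reduce to one-variable Mellin-type integrals. The paper isolates the poles by integrating by parts in $x_2$, where you subtract the boundary value and differentiate in $\tau$. Your explicit check that $\bmD(k_\nu)\neq\emptyset$ is a point the paper leaves implicit.

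One bookkeeping fix: the factor coming from $\log T_g$ is $\log\norm{\bmone_{\bmD}}_{\calL'_\nu}^{-\epsilon}=-\epsilon\log|y|+(\text{smooth})$, not $+\epsilon\log|y|$. It is this sign that makes the factor act as $-\frac{\diff}{\diff\tau}$ and gives the positive double-pole coefficient. The paper's displayed definition of $Z_{\log,\nu_i}$ carries the same sign slip.
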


\begin{rmk}
It follows from Lemma \ref{lemma_height_zeta_meromorphic_continuation_quadratic} that for any $\delta>0$, the zeta function
\[
     Z_{\delta}(\tau):=   \int_{g.x_0 \in \bmX(k_{\infty})}  T_g^{-\delta} \norm{g.x_0}_{k_{\infty}}^{-\tau} \diff \rmm_{G/H}([g])
\]
is analytic on $\mathrm{Re}(\tau)>1-\delta'$ for some $\delta'>0$. Hence for some possibly different $\delta''>0$ and $C>1$
\[
\int_{g.x_0\in B_T} T_g^{-\delta}\diff \rmm_{G/H}(gH)   \leq C T^{1-\delta''}.
\]
\end{rmk}

\begin{proof}
In light of Eq.(\ref{equation_height_zeta_product_places}), it suffices to deal with $Z_{\log,\nu_i}(\tau)$ and $Z_{\nu_i}(\tau)$.
Let us first assume $i=1,...,l_1$, that is, $\nu_i$ is real. The complex case requires a slight modification, which will be explained later.

By partition of unity, there exists a finite open covering $( \calU_{\alpha}) $ of $\overbmX(\R)$ and $\phi_{\alpha} : \overbmX(\R) \to [0,1]$ supported in $\calU_{\alpha}$ with $\sum \phi_{\alpha} =1$.  Therefore,
\begin{equation*}
\begin{aligned}
     Z_{\log,\nu_i}(\tau) =&
      \sum_{\alpha} \int_{g.x_0 \in \bmX(k_{\nu_i})} \phi_{\alpha}(g.x_0)
     \log\left( \norm{ \bmone_{\bmD} ([1:g.x_0])}^{\epsilon_i}_{\calL_{\nu_i}'} \right)
      \norm{ \bmone_{\bmD} ([1:g.x_0])}_{\calL_{\nu_i}}^{-{\epsilon_i}\tau}\, \diff\! \normm{\omega }_{ \nu_i } (g.x_0)
      \\
      =:&
      \sum_{\alpha}  Z^{\alpha}_{\log,\nu_i}(\tau).
\end{aligned}
\end{equation*}
Furthermore, without loss of generality, let us assume that $\calU_{\alpha_0} \subset \bmX(\R)$ and  for ${\alpha}\neq \alpha_0$, $\calU_{\alpha}$ has a local chart
$\Psi_{\alpha}: (-1,1)^2 \cong \calU_{\alpha}$ such that $\Psi_{\alpha}^{-1}\left( \calU_{\alpha} \cap \bmD(\R) \right) = \{(x_1,x_2) ,\; x_2 =0 \}$.
Moreover, the local coordinate $x_2 \circ \Psi_{\alpha}^{-1}$ may be chosen to be a local generator of $\bmD$.
Finally, the pole of the volume form $\omega_X$ has order two along the boundary $\bmD$.
Therefore,
there exist smooth functions $u_{\alpha}, v_{\alpha}  :(-1,1)^2 \to [0,+\infty)$  such that
\begin{itemize}
     \item $v_{\alpha}$ has compact support and $v_{\alpha} (0,0)\neq 0$;
     \item  $u_{\alpha} (x_1,x_2)>0$ for all $(x_1,x_2)\in (-1,1)^2$;
     \item $\displaystyle
     Z^{\alpha}_{\log,\nu_i}(\tau) =  \int_{-1}^1 \int_{-1}^{1} \log \left( \normm{x_2}^{-1} u_{\alpha}(x_1,x_2) \right) \cdot \normm{x_2}^{\tau} \cdot v_{\alpha}(x_1,x_2) \cdot \frac{\diff x_1 \diff x_2}{\normm{x_2}^2};
     $
     \item $\displaystyle
     Z^{\alpha}_{\nu_i}(\tau) =  \int_{-1}^1 \int_{-1}^{1} \normm{x_2}^{\tau} \cdot v_{\alpha}(x_1,x_2) \cdot \frac{\diff x_1 \diff x_2}{\normm{x_2}^2}.
     $
\end{itemize}
To simply notations, let $v_{\alpha}'(x_1,x_2) := v_{\alpha}(x_1,x_2) + v_{\alpha}(x_1,-x_2)$ and
 $v_{\alpha}''(x_1,x_2):= \log( u_{\alpha}(x_1,x_2) )v_{\alpha}(x_1,x_2) + \log( u_{\alpha}(x_1,-x_2) ) v_{\alpha}(x_1,-x_2)$. Then
\begin{equation*}
\begin{aligned}
      Z^{\alpha}_{\log,\nu_i}(\tau) = &\,
       \int_{-1}^1 \int_{0}^{1} \log \left( {x_2}^{-1} \right) {x_2}^{\tau-2}  v_{\alpha}'(x_1,x_2){\diff x_2 \diff x_1}
       +
         \int_{-1}^1 \int_{0}^{1} v_{\alpha}''(x_1,x_2) {x_2}^{\tau-2} {\diff x_2 \diff x_1}
      \\
         =&: 
         A^{\alpha}(\tau) + B^{\alpha}(\tau).
\end{aligned}
\end{equation*}
We now simplify the height zeta functions using integration by part. We deal with $A^{\alpha}$ first and then $B^{\alpha}$. 
\begin{equation*}
\begin{aligned}
   A^{\alpha}(\tau)  =&\,
     \int_{-1}^1 \int_{0}^{1} -  \frac{\partial}{\partial x_2} \left( v_{\alpha}'(x_1,x_2)\log \left( {x_2}^{-1} \right) \right) \frac{ {x_2}^{\tau-1}}{\tau - 1}   {\diff x_2 \diff x_1}
     \\     
     =&\,
     \frac{ 1}{\tau - 1}  \int_{-1}^1 \int_{0}^{1}   \frac{ \partial  v_{\alpha}'(x_1,x_2)  }{\partial x_2}  \log \left( x_2 \right) {x_2}^{\tau-1}   \diff x_2 \diff x_1
     +
     \int_{-1}^1 \int_{0}^{1}  v_{\alpha}'(x_1,x_2) {x_2}^{-1}   \frac{ {x_2}^{\tau-1}}{\tau - 1}   {\diff x_2 \diff x_1}
     \\     
     = &\,
     \frac{ 1}{\tau - 1}  \int_{-1}^1 \int_{0}^{1}  \frac{ \partial  v_{\alpha}'(x_1,x_2)  }{\partial x_2}  \log \left( {x_2} \right) {x_2}^{\tau-1} {\diff x_2 \diff x_1}
     +
     \frac{1}{(\tau - 1)^2} 
     \int_{-1}^1 \int_{0}^{1}  -  \frac{ \partial  v_{\alpha}'(x_1,x_2)  }{\partial x_2}  {x_2}^{\tau-1}  {\diff x_2 \diff x_1}
     \\
     =&:\,
     \frac{ A^{\alpha}_1(\tau)}{\tau - 1}+
     \frac{A^{\alpha}_2(\tau)}{(\tau - 1)^2}  .
\end{aligned}
\end{equation*}
Note that $A^{\alpha}_1(\tau)$, $A^{\alpha}_2(\tau)$ and their derivatives are analytic when $\tau >0$, bounded in vertical strips, and 
\[
  A^{\alpha}_2(1)  = \int_{-1}^1  v_{\alpha}'(x_1,0) \diff x_1 =  2 \int_{-1}^1  v_{\alpha}(x_1,0) \diff x_1 >0.
\]
Similarly,
\begin{equation*}
\begin{aligned}
            B^{\alpha}(\tau) =
           \frac{1}{{ \tau -1 } }   \int_{-1}^1 \int_{0}^{1} - \frac{ \partial  v_{\alpha}''(x_1,x_2)  }{\partial x_2}  {x_2^{\tau -1}}{\diff x_2 \diff x_1}
           = \frac{B^{\alpha}_1(\tau)}{ \tau -1} 
\end{aligned}
\end{equation*}
with $B^{\alpha}_1(\tau)$, as well as its derivatives, being analytic and bounded in vertical strips on the half space $\mathrm{Re}(\tau) >0$.
So we conclude that 
$Z_{\log,\nu_i}(\tau)$ enjoys the following property
\begin{itemize}
     \item  it has a meromorphic continuation to the half space $\mathrm{Re}(\tau)> 0$;
     \item  it has  a pole of order $2$  at $\tau=1$ with positive residue and is analytic elsewhere;
     \item its derivatives have moderate growth in vertical strips on the half space $\mathrm{Re}(\tau)> 0$.
\end{itemize}
  For $Z^{\alpha}_{\nu_i}(\tau) $,
\begin{equation*}
\begin{aligned}
           Z_{\nu_i}(\tau)  =
           \frac{1}{{ \tau -1 } }   \int_{-1}^1 \int_{0}^{1} - \frac{ \partial  v_{\alpha}'(x_1,x_2)  }{\partial x_2}  {x_2^{\tau -1}}{\diff x_2 \diff x_1}
           = :\frac{C^{\alpha}_1(\tau)}{ \tau -1} 
\end{aligned}
\end{equation*}
with $C^{\alpha}_1(\tau)$ analytic on $\mathrm{Re}(\tau)>0$ and
\[
    C^{\alpha}_1(1) =  \int_{-1}^1 \int_{0}^{1} - \frac{ \partial  v_{\alpha}'(x_1,x_2)  }{\partial x_2}   \diff x_2 \diff x_1
    =  \int_{-1}^1 v_{\alpha}'(x_1,0) \diff x_1 >0.
\] 
Thus $Z_{\nu_i}(\tau)$ has the same properties as $Z_{\log,\nu_i}(\tau)$ except that the order of pole is $1$ instead of $2$.

Now we briefly indicate the needed modification when the place $\nu_j$ is complex.
\begin{itemize}
    \item $(\calU_{\alpha})$ is an open covering of $\overbmX(\C)$ and $\phi_{\alpha}: \overbmX(\C) \to [0,1]$ is a smooth function supported on $\calU_{\alpha}$.
    \item $\Psi_{\alpha}: \big( (-1,1)+i(-1,1) \big)^2 \cong \calU_{\alpha}$, $\Psi_{\alpha}^{-1} \left(\calU_{\alpha}\cap \bmD(\C) \right)= \{(z_1,z_2) ,\; z_2 =0\}$, and $z_2 \circ \Psi^{-1}_{\alpha}$ is a local generator of $\bmD$.
    \item $u_{\alpha},v_{\alpha} :  \big( (-1,1)+i(-1,1) \big)^2 \to [0,+\infty)$ with similar properties as before.
    \item The zeta functions look slightly different
    \[
     Z^{\alpha}_{\log,\nu_j}(\tau) =  \int_{-1}^1 \int_{-1}^{1} \int_{-1}^1 \int_{-1}^{1} 
      \log \left( \normm{x_2+iy_2}^{-2} u_{\alpha} (z_1,z_2)^2 \right) \normm{x_2+iy_2}^{2 \tau} v_{\alpha} (z_1,z_2)\, \frac{\diff x_2 \diff y_2 \diff x_1 \diff y_1}{\normm{x_2+iy_2}^4}.
     \]
     Using polar coordinate $z_2 = r_2 e^{i \theta_2}$, we have 
      \begin{equation*}
     \begin{aligned}
            Z^{\alpha}_{\log,\nu_j}(\tau)  &=  \int_{-1}^1 \int_{-1}^{1} \int_{0}^{2\pi} \int_{0}^{\infty} 
              \log \left( r_2^{-2} u_{\alpha}(z_1,z_2)^2 \right) r_2^{2 \tau} v_{\alpha} (z_1, z_2)\, \frac{r_2 \diff r_2 \diff \theta_2 \diff x_1 \diff y_1}{r_2^4}
              \\
              &=
               \int_{-1}^1 \int_{-1}^{1} \int_{0}^{2\pi} \int_{0}^{\infty} 
              \log \left( r_2^{-2} u_{\alpha} (z_1,z_2)^2 \right) r_2^{2 \tau-3} v_{\alpha} (z_1, z_2) \,{ \diff r_2 \diff \theta_2 \diff x_1 \diff y_1}.
     \end{aligned}
      \end{equation*}
      \item Likewise, the other zeta function is
      \[
     Z^{\alpha}_{\nu_j}(\tau) =   \int_{-1}^1 \int_{-1}^{1} \int_{0}^{2\pi} \int_{0}^{\infty} 
               r_2^{2 \tau-3} v_{\alpha} (z_1, z_2) \, { \diff r_2 \diff \theta_2 \diff x_1 \diff y_1}.
     \]
\end{itemize}
By performing integration by parts as before, one finds that $Z_{\log,\nu_j}(\tau)$
\begin{itemize}
     \item  has a meromorphic continuation to the half space $\mathrm{Re}(\tau)> 0.5$;
     \item  has  a pole of order $2$  at $\tau=1$ with positive residue and is analytic elsewhere;
     \item its derivatives have moderate growth in vertical strips on the half space $\mathrm{Re}(\tau)> 0.5$.
\end{itemize}
Again, the same thing holds for $Z_{\nu_j}(\tau)$ except that the order of pole is $1$ instead of $2$.

The proof is complete by using Eq.(\ref{equation_height_zeta_product_places}).
\end{proof}

\section{Effective equidistribution of lines on tori}\label{appendix_B}

We prove Lemma \ref{lemma_equidistribution_coordinate_lines_tori_number_fields}. Let us state it again for convenience.
\begin{lem}\label{lemma_equidistribution_coordinate_lines_tori_number_fields_appendix}
Let $\Lambda$ be a lattice in $\R^l$ commensurable with the geometric embedding of $\calO_k$. 
Take the normalization $\int_{\R^l/\Lambda} \diff[\vec{v}] =1$.
There exists some $0<\delta <0.5$ such that for  every $T_0\in \R$ and $T$ sufficiently large, the following holds for every Lipschitz-continuous function $f$:
\begin{itemize}
    \item[(1)] For $i=1,...,l_1$, 
    \begin{equation*}
    \begin{aligned}
                  \frac{1}{T} \int_{T_0}^{T_0+T} f([t \vec{e}_i  ]) \, \diff t = 
                  \int_{\R^l/\Lambda} f([\vec{v}]) \, \diff [\vec{v}] + \bmO \Big(\frac{\Lip(f)}{T^{\delta}} \Big).
   \end{aligned}
    \end{equation*}
    \item[(2)] For $j=1,..., l_2$ and every interval $I$ of length at most $2\pi$,
     \begin{equation*}
    \begin{aligned}
                  \frac{1}{T} \int_{T_0}^{T_0+T}  \int_{I}
                  f([t e^{i\theta}  \vec{e}_{l_1+j}  ]) \,\diff \theta \diff t = 
                  |I| \int_{\R^l/\Lambda} f([\vec{v}]) \, \diff [\vec{v}] + \bmO \Big(\frac{\Lip(f)}{T^{\delta}} \Big).
   \end{aligned}
    \end{equation*}
\end{itemize}
\end{lem}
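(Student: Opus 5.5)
The plan is to use Fourier analysis on the torus $\R^l/\Lambda$, followed by a Diophantine estimate for the characters $\xi\in\Lambda^*$ (the dual lattice). First, approximate the Lipschitz function $f$ by a smooth one. Convolve with a bump of width $\eta$; this costs $\bmO(\Lip(f)\eta)$ in sup norm. Its Fourier coefficients satisfy $|\hat f_\eta(\xi)|\le \Lip(f)\,C_N\,\eta^{-N}\norm{\xi}^{-N}$ for $\xi\neq 0$, after bounding $\norm{f-\int f}_\infty\le C\Lip(f)$. Then expand $f_\eta=\sum_{\xi\in\Lambda^*}\hat f_\eta(\xi)e^{2\pi i\la \xi,\cdot\ra}$. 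The zero mode gives the main term, and since $\Lambda^*$ has polynomial growth, the tail $\norm{\xi}>R$ contributes $\bmO(\Lip(f)\eta^{-N}R^{-N+l})$.

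For each nonzero $\xi$ with $\norm{\xi}\le R$, compute the line average exactly. In case (1), $\frac1T\int_{T_0}^{T_0+T}e^{2\pi i t\xi_i}\,\diff t$ has modulus at most $\min\{1,(\pi T|\xi_i|)^{-1}\}$, where $\xi_i=\la\xi,\vec e_i\ra$. In case (2), write $\la\xi,te^{i\theta}\vec e_{l_1+j}\ra=t\,\mathrm{Re}(\overline{\zeta_j}e^{i\theta})$ with $\zeta_j\in\C$ the $j$-th complex coordinate of $\xi$. Integrating first in $t$ gives the bound $\min\{1,(T|\zeta_j||\cos(\theta-\arg\zeta_j)|)^{-1}\}$. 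Integrating this over $\theta\in I$, the set where $|\cos|$ is small has measure $\ll s$, so the total is $\ll s+ (T|\zeta_j|s)^{-1}$. Optimizing in $s$ gives $\ll (T|\zeta_j|)^{-1/2}$. Since this bound is uniform in $I$ and the main term scales exactly with $|I|$, case (2) reduces to a lower bound on $|\zeta_j|$.

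The key arithmetic input, which I expect to be the main point, is a lower bound on a single coordinate of a nonzero dual vector. Identify $\Lambda^*$ with a lattice commensurable with the trace dual (inverse different) of $\calO_k$, embedded in $k_\infty$ via $\sigma_1,\dots,\sigma_{l_1+l_2}$ and twisted by the trace form. Then every nonzero $\xi\in\Lambda^*$ corresponds to some $\alpha\in k^\times$ lying in a fixed fractional ideal. Its norm satisfies $|\Nm(\alpha)|=\prod_i|\sigma_i(\alpha)|^{\epsilon_i}\ge c>0$, with $\epsilon_i=1$ at real places and $2$ at complex places. Hence
\[
|\sigma_i(\alpha)|\ \ge\ c\prod_{i'\neq i}|\sigma_{i'}(\alpha)|^{-\epsilon_{i'}}\ \gg\ \norm{\xi}^{-(l-1)}.
\]
One must check that the linear functional $\vec v\mapsto\la\xi,\vec v\ra$ restricted to the $i$-th coordinate line is indeed given by $\sigma_i(\alpha)$, up to the fixed constant $2$ at complex places. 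This is just the identity $\mathrm{tr}(\alpha\beta)=\sum_i\epsilon_i\mathrm{Re}(\sigma_i(\alpha)\sigma_i(\beta))$.

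Combining the pieces, the error is at most
\[
\Lip(f)\Bigl(\eta+\eta^{-N}R^{-N+l}+\eta^{-N}R^{l}\,(T R^{-(l-1)})^{-1/2}\Bigr).
\]
Here the middle sum is bounded by counting the lattice points with $\norm{\xi}\le R$, times the Fourier coefficient bound. It is cleaner to use $N=l+1$ and sum $|\hat f_\eta(\xi)|$ dyadically. Choosing $\eta=T^{-\delta}$ and $R=T^{\delta'}$ with $\delta\ll\delta'\ll 1/l$ makes every term $\ll\Lip(f)T^{-\delta}$. This yields the lemma for $T$ sufficiently large, uniformly in $T_0$ and $I$.
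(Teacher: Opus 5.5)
Your proof is correct, but it takes a different route from the paper's. You work directly with Weyl's criterion: you smooth $f$ at scale $\eta$, expand in characters of $\R^l/\Lambda$, and bound each nonzero character's average explicitly. For part (2) you also integrate that per-character bound over $\theta$. The paper never expands in Fourier modes. It first proves an intermediate line lemma: $t\vec\alpha$ equidistributes up to $\Lip(f)\delta$ whenever $|\scrL(\vec\alpha)|\ge\delta^{-\kappa(2l+2)}/N$ for all nonzero $\scrL\in\Hom(\Lambda,\Z)$ with $\norm{\scrL}\le\delta^{-\kappa(2l+2)}$. It deduces this lemma from the Green--Tao quantitative Kronecker theorem for the discrete orbit $k\vec\alpha$. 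The passage from discrete to continuous uses a dilation $\vec\alpha\mapsto(1+\lambda)\vec\alpha$ with $\lambda$ outside a small bad set, followed by chopping the time interval into unit pieces. For part (2), the paper deletes the angles $E=\bigcup_\beta E(\beta)$ where $|\mathrm{Re}(\sigma_{l_1+j}(\beta)e^{i\theta})|\le T^{-\delta_1}|\sigma_{l_1+j}(\beta)|$, with $\norm{\scrL_\beta}\le L_0T^{\delta_2\kappa(2l+2)}$. Since $|E|\ll T^{-0.1}$, that set is bounded trivially, and the line lemma is applied at each remaining $\theta$. The arithmetic input is the same as yours: the trace pairing realises a finite-index subgroup of $\Hom(\Lambda,\Z)$ by $\calO_k$, and $|\Nm(\beta)|\ge 1$ gives $|\sigma_{l_1+j}(\beta)|\ge\norm{\scrL_\beta}^{-(l-1)}$. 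Your set $\{|\cos(\theta-\arg\zeta_j)|\le s\}$ is exactly the paper's $E(\beta)$ with $s=T^{-\delta_1}$. Where the paper takes a union bound over $\beta$ and then fixes a good angle, you integrate the per-character bound in $\theta$. Your route is self-contained and more elementary: continuous-time character averages are explicit ($\min\{1,(\pi T|c|)^{-1}\}$), so you need neither the external theorem nor the discrete-to-continuous reduction, and your exponents are explicit. The paper's route is modular: the Diophantine condition is checked once per direction, and any quantitative Kronecker-type input can be plugged in. One small slip: at a complex place the norm inequality should read $|\sigma_i(\alpha)|^{2}\ge c\prod_{i'\ne i}|\sigma_{i'}(\alpha)|^{-\epsilon_{i'}}$, giving $|\sigma_i(\alpha)|\gg\norm{\xi}^{-(l-2)/2}$. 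This still implies the $\gg\norm{\xi}^{-(l-1)}$ bound you use, because nonzero dual vectors have norm bounded below. Since $\bmO$ in the statement carries no implied constant, you must shrink $\delta$ slightly and take $T$ large to absorb your constants, as you implicitly do.
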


We shall need the following.
\begin{lem}\label{lemma_Appendix_irrational_lines}
Let $\Lambda \leq \R^l$ be a lattice with $l\geq 1$. There exists  $\kappa >4$ such that 
whenever $(\delta, \vec{\alpha}, N)$ satisfies
\begin{itemize}
   \item[(1)] $0 < \delta < 0.5 $, $\vec{\alpha} \in \R^l$, $N>1$;
   \item[(2)] $\displaystyle \normm {  \scrL(\vec{\alpha})   } \geq \frac{\delta^{-\kappa (2l+2)}}{ N} \, \text{ for all }  \, \scrL_{\neq 0} \in \mathrm{Hom}(\Lambda, \Z) \subset (\R^l)^* ,\; \norm{\scrL}\leq \delta^{-\kappa(2l+2)}$,
\end{itemize} 
then for every Lipschitz-continuous function $f: \R^l / \Lambda \to \R$ and $N_0\in \R$,
\begin{equation*}
\begin{aligned}
    \frac{1}{N} \int_{N_0}^{N_0+N} f ([t\vec{\alpha}]) \, \diff t = \int_{\R^l/\Lambda }  f([\vec{v}]) \, \diff[\vec{v}] + \bmO(\Lip(f) \delta )
\end{aligned}
\end{equation*}
where it is normalized such that $\int_{\R^l/\Lambda } \diff[\vec{v}] =1 $.
\end{lem}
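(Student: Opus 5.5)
We prove Lemma \ref{lemma_Appendix_irrational_lines} by the standard Fourier-analytic route: smooth $f$, expand in characters of $\R^l/\Lambda$, and bound the line average of each nontrivial character using the Diophantine hypothesis (2). Throughout, $\kappa$ will depend only on $\Lambda$ and $l$.

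\emph{Smoothing.} Let $\rho_\delta$ be a smooth bump on $\R^l$ supported in the $\delta$-ball, with $\int\rho_\delta=1$, and put $f_\delta:=f*\rho_\delta$ on the torus. Then $\norm{f-f_\delta}_\infty\leq \Lip(f)\,\delta$. This changes both the line average and the Haar average by $\bmO(\Lip(f)\delta)$, so it suffices to prove the lemma for $f_\delta$.

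\emph{Fourier coefficients.} The characters of $\R^l/\Lambda$ are $\chi_{\scrL}([\vec v])=e^{2\pi i\scrL(\vec v)}$ for $\scrL\in\Hom(\Lambda,\Z)$. We may subtract the mean, since adding a constant changes neither side, and so assume $f$ vanishes at a point. Then $\norm{f}_\infty\ll\Lip(f)$, and hence $|\widehat f_\delta(\scrL)|\ll\Lip(f)$ for all $\scrL$. Integrating by parts $n$ times against $\rho_\delta$ gives the decay
\[
|\widehat{f_\delta}(\scrL)|\ll_n \Lip(f)\,(\delta\norm{\scrL})^{-n}.
\]
The zero mode of $f_\delta$ is exactly $\int f_\delta$. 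For a nonzero $\scrL$, the line average of the character is
\[
\frac1N\int_{N_0}^{N_0+N}e^{2\pi i t\scrL(\vec\alpha)}\,\diff t,
\]
which has modulus at most $\min\{1,(\pi N|\scrL(\vec\alpha)|)^{-1}\}$.

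\emph{Splitting frequencies.} Set $R:=\delta^{-\kappa(2l+2)}$.
\begin{itemize}
\item For $0<\norm{\scrL}\leq R$, hypothesis (2) gives $N|\scrL(\vec\alpha)|\geq R$. Each such term is therefore $\ll\Lip(f)R^{-1}$. There are $\ll R^l$ such $\scrL$, so their total is $\ll\Lip(f)R^{l-1}$.
\item For $\norm{\scrL}>R$, we bound the line average trivially by $1$ and use the decay with $n=l+1$. The tail sum is $\ll\Lip(f)\,\delta^{-(l+1)}R^{-1}$.
\end{itemize}

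The low-frequency count $R^{l-1}$ grows with $R$, which is exactly the main obstacle. I would fix it by using two thresholds rather than one:
\begin{itemize}
\item a frequency cutoff $R_1=\delta^{-2}$, so that the tail is $\ll\delta^{-(l+1)}R_1^{-1}\cdot(\text{polynomial})$, made $\leq\delta$ by taking $n$ larger (say $n=2l+2$);
\item the Diophantine threshold $R=\delta^{-\kappa(2l+2)}$, which is much larger than $R_1^{l+1}$ once $\kappa>4$.
\end{itemize}
The low-frequency total then becomes $\ll R_1^{l}/R\leq\delta$, and the tail contributes $\ll\delta$ as well. This is presumably why the exponent in (2) is $\kappa(2l+2)$. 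The constants depend only on the dual lattice, so $\kappa$ may be chosen depending on $\Lambda$ and $l$ to absorb them.

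Summing the three contributions (smoothing error, low frequencies, tail) gives
\[
\frac1N\int_{N_0}^{N_0+N}f([t\vec\alpha])\,\diff t=\int f\,\diff[\vec v]+\bmO(\Lip(f)\delta).
\]
The only real care needed is this bookkeeping of exponents, so that the lattice-point count $R_1^l$ is dominated by the Diophantine gain $R$. Everything else is routine.
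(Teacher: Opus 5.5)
Your argument is correct, up to a routine constant adjustment noted at the end, and it takes a genuinely different route from the paper.

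The paper does no Fourier analysis itself. It quotes the discrete effective Kronecker theorem of Green--Tao (Proposition \ref{proposition_effective_Kronecker}), whose hypothesis bounds $|\vec v\cdot\vec\alpha-L|$ from below for \emph{all} integers $L$, whereas hypothesis (2) only controls $|\vec v\cdot\vec\alpha|$. To bridge this gap, the paper does four things:
\begin{enumerate}
\item It rescales so that $\norm{\vec\alpha}\le 1$.
\item It replaces $\vec\alpha$ by $(1+\lambda)\vec\alpha$, with $\lambda\in(-0.5,0)$ chosen off a union of bad intervals. Each interval has length at most $2\delta^{\kappa(2l+1)}$, because (2) makes the slope $|\vec v\cdot\vec\alpha|$ large.
\item It writes $\int_0^N$ as an average over $s\in[0,1]$ of discrete orbit sums.
\item It removes the resulting $O(1/N)$ error by the substitution $\vec\alpha\mapsto\vec\alpha/M$, $N\mapsto NM$, and letting $M\to\infty$.
\end{enumerate}
In particular, the exponent $\kappa(2l+2)$ in (2) is what makes that union bound over pairs $(\vec v,L)$ work. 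It is not there for the two-threshold bookkeeping you guessed.

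Your route instead uses that for a continuous flow the character average $\frac1N\int_{N_0}^{N_0+N}e^{2\pi i t\xi(\vec\alpha)}\,\diff t$ is at most $(\pi N|\xi(\vec\alpha)|)^{-1}$, with no resonances at nonzero integers. So (2) enters directly, and neither the perturbation in $\lambda$ nor the rescaling trick is needed. Your version is self-contained and shows that a much weaker exponent in (2) would suffice. The paper's version is shorter because it outsources the harmonic analysis to a published result.

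One bookkeeping point. In this paper $\bmO$ carries no implicit constant, so the final error must be at most $\Lip(f)\delta$ exactly. Your smoothing step alone already costs $2\Lip(f)\delta$. Moreover, with $R_1=\delta^{-2}$ fixed, the tail bound has the form $C_{l,\Lambda}\delta^{2}$ and does not involve $\kappa$, so enlarging $\kappa$ cannot absorb its constant. The fix is routine:
\begin{enumerate}
\item Smooth at scale $\delta/4$.
\item Take $R_1=\delta^{-m}$, with $m$ large in terms of $l$ and $\Lambda$, so that the tail is at most $\Lip(f)\delta/4$.
\item Only then choose $\kappa$ large enough that $R_1\le R$ and the low-frequency contribution, which is $\ll \Lip(f)R_1^{l}/R$, is at most $\Lip(f)\delta/4$.
\end{enumerate}
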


This will be deduced from an effective version of  Kronecker's equidistribution theorem, which can be found in \cite[Proposition 3.1]{Green_Tao_nilmanifold}.
\begin{prop}\label{proposition_effective_Kronecker}
For $l \in \Z^+$, there exists $\kappa>1$ such that if $0<\delta<0.5$, $\vec{\alpha} \in \R^l$ and $N\in \Z^+$ satisfy
\begin{equation}\label{equation_condition_kronecker}
\begin{aligned}
   \normm{\vec{v} \cdot \vec{\alpha} - L} \geq \frac{\delta^{-\kappa}}{N},\quad
   \text{for all } \; 
   \vec{v} \in \Z^l,\;
   L\in \Z,\; 0<\vec{v} \leq \delta^{-\kappa},
\end{aligned}
\end{equation}
then for every Lipschitz-continuous function $f: \R^l /\Lambda \to \R$,
\begin{equation*}
\begin{aligned}
     \normm{
      \frac{1}{N} \sum_{k=1}^N f ( [k \vec{\alpha}]) - \int_{\R^l/\Lambda } f ([\vec{v}]) \, \diff[\vec{v}]
     }
     \leq \delta \Lip( f ).
\end{aligned}
\end{equation*}
\end{prop}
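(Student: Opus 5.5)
We would prove this by Fourier analysis on the torus. First we reduce to $\Lambda=\Z^l$: condition (\ref{equation_condition_kronecker}) is phrased with integer vectors $\vec v\in\Z^l$, so we read the torus as $\R^l/\Z^l$. For a general lattice, applying the linear isomorphism $\R^l/\Lambda\cong \R^l/\Z^l$ changes $\Lip(f)$ and the characters only by constants depending on $\Lambda$, and these can be absorbed by enlarging $\kappa$. Subtracting the mean, we may assume $\int f=0$, so that $\norm{f}_\infty\le C_l\Lip(f)$, the diameter of the torus being bounded.

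\textbf{Step 1 (smoothing).} Let $K_M(\vec x)=\prod_{i=1}^l J_M(x_i)$ be a product of Jackson kernels of degree $M$. These are nonnegative trigonometric polynomials of total mass one, with frequencies in $[-M,M]$ and first moment $\int \normm{x}K_M\ll M^{-1}$. Put $g:=f*K_M$. Then
\[
\norm{f-g}_\infty\le \Lip(f)\int\normm{\vec x}K_M(\vec x)\,\diff\vec x\le C_l\,\Lip(f)\,M^{-1}.
\]
The function $g$ is a trigonometric polynomial $\sum_{0<\norm{\vec v}_\infty\le M}\hat g(\vec v)e(\vec v\cdot\vec x)$ with no constant term, since $\int g=\int f=0$. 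Its coefficients satisfy $\normm{\hat g(\vec v)}\le\normm{\hat f(\vec v)}\le\norm{f}_\infty\le C_l\Lip(f)$.

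\textbf{Step 2 (exponential sums).} For each frequency $\vec v\neq0$ with $\norm{\vec v}_\infty\le M$, the geometric series bound gives
\[
\Big|\frac1N\sum_{k=1}^N e(k\,\vec v\cdot\vec\alpha)\Big|\le \frac{1}{2N\,\mathrm{dist}(\vec v\cdot\vec\alpha,\Z)}.
\]
If $M\le\delta^{-\kappa}$, hypothesis (\ref{equation_condition_kronecker}) makes the right-hand side at most $\delta^{\kappa}/2$. Summing over the at most $(2M+1)^l$ frequencies, and adding back the error from Step 1, we get
\[
\Big|\frac1N\sum_{k=1}^N f([k\vec\alpha])\Big|\le C_l\Lip(f)\big(M^{-1}+(2M+1)^l\delta^{\kappa}\big).
\]

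\textbf{Step 3 (choice of parameters).} We take $M:=\lceil 2C_l\delta^{-1}\rceil$, so that the smoothing error is at most $\delta\Lip(f)/2$. We then choose $\kappa$ large depending only on $l$, say $\kappa\ge l+3$ plus a margin to absorb constants. This guarantees $M\le\delta^{-\kappa}$, and also $C_l(2M+1)^l\delta^\kappa\le\delta/2$ because $\delta<1/2$. Together these give the claimed bound $\delta\Lip(f)$.

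The main obstacle is only bookkeeping. The smoothing kernel must have first moment $O(1/M)$ rather than $O(\log M/M)$; the latter is what the Fejér kernel gives, and it would cost a logarithm. We also need the constants arising from the lattice $\Lambda$ to be absorbable into $\kappa$. Both are handled by the Jackson kernel and by taking $\kappa$ sufficiently large in terms of $l$ and $\Lambda$.
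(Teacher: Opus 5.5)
Your argument is correct for $\Lambda=\Z^l$. It differs from the paper mainly because the paper does not prove Proposition~\ref{proposition_effective_Kronecker}: it imports it from \cite[Proposition 3.1]{Green_Tao_nilmanifold}. The proof environment printed right after the proposition is actually the deduction of Lemma~\ref{lemma_Appendix_irrational_lines} from it. That deduction perturbs $\vec{\alpha}$ to $(1+\lambda)\vec{\alpha}$ off a small bad set, discretizes the time integral, and lets $M\to\infty$.

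Your proof has three ingredients: a Jackson-kernel approximation, the bound $\bigl|\frac1N\sum_{k=1}^N e(k\theta)\bigr|\le (2N\,\mathrm{dist}(\theta,\Z))^{-1}$ with $e(x):=e^{2\pi i x}$, and the count of frequencies. This is the standard Fourier-analytic proof of the linear case of such quantitative Kronecker statements, here made self-contained and with an explicit $\kappa$. Your mean-zero reduction also handles the fact that Green--Tao normalize by $\norm{f}_\infty+\Lip(f)$ rather than by $\Lip(f)$ alone.

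Your route also gives more than the proposition. For the continuous average one has $\bigl|\frac1N\int_{N_0}^{N_0+N}e(t\theta)\,\diff t\bigr|\le (\pi N\normm{\theta})^{-1}$, which involves $\normm{\theta}$ itself rather than its distance to $\Z$. The hypothesis of Lemma~\ref{lemma_Appendix_irrational_lines} is exactly a lower bound on $\normm{\scrL(\vec{\alpha})}$ for the small nonzero $\scrL\in\Hom(\Lambda,\Z)$, i.e.\ for the characters of $\R^l/\Lambda$. So your same three steps prove that lemma directly, and the paper's perturbation and bad-set argument becomes unnecessary. Incidentally, the extra $\log M$ from a Fej\'er kernel would also have been harmless, since enlarging $\kappa$ absorbs it.

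Your remark about general lattices is wrong, though. The linear map $A$ with $A\Lambda=\Z^l$ replaces $\vec{\alpha}$ by $A\vec{\alpha}$. Step~2 then needs a lower bound on $\mathrm{dist}(\scrL(\vec{\alpha}),\Z)$ for small nonzero $\scrL\in\Hom(\Lambda,\Z)$, not on $\mathrm{dist}(\vec{v}\cdot\vec{\alpha},\Z)$ for $\vec{v}\in\Z^l$. These are not comparable in general, and the proposition as printed then fails. Take $l=1$, $\Lambda=\sqrt{2}\Z$ and $\alpha=\sqrt{2}$:
\begin{itemize}
\item the orbit $[k\alpha]$ is the single point $[0]$;
\item $\normm{v\sqrt{2}-L}>(4\normm{v})^{-1}$ for all integers $v\neq0$ and $L$, so the hypothesis holds once $N\ge 4\delta^{-2\kappa}$;
\item $f=\mathrm{dist}(\cdot,\sqrt{2}\Z)$ violates the conclusion for $\delta<0.35$.
\end{itemize}
So the statement must be read in one of two ways. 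Either $\Lambda=\Z^l$, which is all the paper uses since it reduces to that case first. Or the hypothesis ranges over $\Hom(\Lambda,\Z)$ and $\kappa$ is allowed to depend on $\Lambda$. Under either reading your proof is complete.
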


\begin{proof}
     Without loss of generality we assume $\Lambda=\Z^l$ and $N_0=0$. 
     Take a triple $(\delta, \vec{\alpha}, N)$ satisfying the condition listed in Lemma \ref{lemma_Appendix_irrational_lines}. We may further assume $\norm{\vec{\alpha}}\leq 1$ by replacing $\vec{\alpha}$ by $\vec{\alpha}/M$ and $N$ by $NM$ for a positive integer $M$.
     
     We will firstly show that for some $\lambda\in (-0.5,0)$, the triple $(\delta, (1+\lambda)\vec{\alpha} , N')$, for any integer larger than $N$, satisfies Eq.(\ref{equation_condition_kronecker}).
     To achieve this, we define for each pair $(\vec{v}, L) \in \Z^l \times \Z$ with $\norm{\vec{v}} \leq \delta^{-\kappa}$ (hence is also bounded by $\delta^{-\kappa(2l+2)}$),
     \begin{equation*}
\begin{aligned}
      \mathrm{BAD}(\vec{v},L)  := \left\{
         \lambda \in (-0.5,0) \midd
         \normm{   \vec{v}\cdot \vec{\alpha} - L + \lambda  \vec{v}\cdot \vec{\alpha}
         } 
         \leq \frac{\delta^{-\kappa}}{N}
      \right\}.
\end{aligned}
\end{equation*}
     By assumption, the ``slope'' $\normm{\vec{v}\cdot \vec{\alpha}} \geq N^{-1} {\delta}^{-\kappa (2l+2)}$, we have 
     \begin{equation*}
\begin{aligned}
   \normm{  \mathrm{BAD}(\vec{v},L)  } \leq \frac{ 2N^{-1}\delta^{-\kappa}
   }{    N^{-1} {\delta}^{-\kappa (2l+2)}
   } = 2\delta^{\kappa(2l+1)}.
\end{aligned}
\end{equation*}
     Let
     \begin{equation*}
\begin{aligned}
     \mathrm{BAD}:= \bigcup_{   0 < \norm{\vec{v}}   \leq \delta^{-\kappa} 
     } \mathrm{BAD}(\vec{v},L)  .
\end{aligned}
\end{equation*}
Then for $\lambda \in (-0.5,0)\setminus \mathrm{BAD}$, the triple  $(\delta, (1+\lambda)\vec{\alpha} , N)$ satisfies Eq.(\ref{equation_condition_kronecker}).
To find such a $\lambda$, we must show $\normm{\mathrm{BAD}} <0.5$.

Note that a bound has been implicitly imposed on $L$:
\begin{equation*}
\begin{aligned}
   0 < \norm{\vec{v}}   \leq \delta^{-\kappa},\; \mathrm{BAD}(\vec{v},L)  \neq \emptyset
   \implies 
   \normm{L} \leq \normm{\vec{v}} \normm{\vec{\alpha}} + N^{-1} \delta^{-\kappa} \leq 2 N^{-1} \delta^{-\kappa}.
\end{aligned}
\end{equation*}
Therefore,
 \begin{equation*}
\begin{aligned}
        &  \mathrm{BAD}:= \bigcup_{  \substack{
           0 < \norm{\vec{v}}   \leq \delta^{-\kappa} ,\\ \normm{L} \leq 2N^{-1}\delta^{-\kappa}
           }
     } \mathrm{BAD}(\vec{v},L) 
     \implies
     &
     \normm{\mathrm{BAD}}
     \leq 3^l \delta^{- \kappa   l} \cdot 2N^{-1} \delta^{-\kappa} \cdot 2\delta^{\kappa(2l+1)} < 2(3\delta^{\kappa})^l.
\end{aligned}
\end{equation*}   
But $\delta <0.5$ and $\kappa >4$ imply that $\delta^{\kappa} < 1/16$ and hence $ 2(3\delta^{\kappa})^l <0.5$.

Now  fix such a $\lambda \in (-0.5,0)\setminus \mathrm{BAD}$ and write 
\[
\vec{\alpha}_{\lambda}:= (1+\lambda)\vec{\alpha },\quad N_{\lambda}:= \left \lfloor \frac{N}{1+\lambda} \right\rfloor.
\]
Thus  $(\delta, \vec{\alpha}_{\lambda}, N_{\lambda})$ satisfies Eq.(\ref{equation_condition_kronecker}) and Proposition \ref{proposition_effective_Kronecker} is ready to be applied.
\begin{equation*}
\begin{aligned}
     \frac{1}{N} \int_0^N f([t.\vec{\alpha}]) \diff t
     = &
      \frac{1+\lambda }{N} \int_{0}^{\frac{N}{1+\lambda}} f ([t \vec{\alpha}_{\lambda}]) \diff t
     \\
     =
     &
       \frac{1+\lambda }{N} \int_0^1 \sum_{k=0}^{N_{\lambda}-1 } f ([s  \vec{\alpha}_{\lambda}+ k \vec{\alpha}_{\lambda}]) \,  \diff s
       + \bmO \Big(\frac{1+\lambda}{N} \Lip{f}  \Big)
       \\
       =&
       \int_0^1
       \frac{N_{\lambda}}{ N/(1+\lambda) } \int_{\R^l/\Z^l} f([\vec{v}]) \,\diff [\vec{v}] \,
       \diff s
        + \bmO(\delta \Lip(f)) + \bmO\Big(\frac{1+\lambda}{N} \Lip(f) \Big)
        \\
        =&
         \int_{\R^l/\Z^l} f([\vec{v}]) \,\diff [\vec{v}] + \bmO \Big( \Lip(f)(\delta + \frac{2}{N})  \Big).
\end{aligned}
\end{equation*}
To kill the additional $\frac{2}{N}$, we apply the above equation to $\vec{\alpha}$ replaced by $\vec{\alpha}/M$ and $N$ replaced by $NM$ for a positive integer $M$. It can be seen from the definition that the new triple $(\delta, \vec{\alpha}/M, NM )$ still verifies the assumption. Therefore,
\begin{equation*}
\begin{aligned}
     \frac{1}{MN} \int_0^N f([\frac{t}{M}\vec{\alpha}]) \diff t =   \int_{\R^l/\Z^l} f([\vec{v}]) \,\diff [\vec{v}] + \bmO\Big( \Lip(f)(\delta + \frac{2}{MN}) \Big).
\end{aligned}
\end{equation*}
But by a change of variable, the left hand side is equal to 
\[
       \frac{1}{N} \int_0^N f([{t}\vec{\alpha}]) \diff t.
\]
So letting $M \to +\infty$ completes the proof.
\end{proof}

\begin{proof}[Proof of Lemma \ref{lemma_equidistribution_coordinate_lines_tori_number_fields_appendix}]
For simplicity, we will only prove the second part. And without loss of generality take $T_0=0$ and $I \subset [0,2\pi)$.

To avoid confusion, we distinguish  $\beta \in k$ from its geometric embedding in $k_{\infty}$, denoted as $\vec{v}_{\beta}$. 
Associating $\beta \in \calO_k$ the map $\scrL_{\beta}(\alpha) := \tr_{k/\Q}(\alpha \cdot \beta) $ embeds $\calO_k$ into $\Hom_{\Z}(\calO_k,\Z)$ as a finite-index subgroup, which naturally sits inside $\Hom_{\R}(k_{\infty},\R)$. 
Since $\Lambda$ is commensurable with $\calO_k$, $\Hom_{\Z}(\Lambda,\Z)$ contains a finite index subgroup where all the elements come from $\calO_k$. Let $L_0$ be this index.

For $\vec{v}=(v_1,...,v_{l_1+l_2}) \in k_{\infty}$, 
\[
     \scrL_{\beta}(\vec{v}) = \sum_{i=1}^{l_1} \sigma_i(\beta) v_i + \sum_{j=1}^{l_2} 2 \mathrm{Re}(\sigma_{l_1+j}(\beta) v_{l_1+j}).
\]
In particular,
\begin{equation*}
\begin{aligned}
    \scrL_{\beta}(\vec{e_i}) &= \sigma_i(\beta) ,\quad i=1,..., l_1;
    \\
    \scrL_{\beta}(\vec{e}_{l_1+j}  ) &=  2\mathrm{Re}(\sigma_{l_1+j}(\beta)) ,\;\;
     \scrL_{\beta}( i \vec{e}_{l_1+j}  ) = - 2\mathrm{Im}(\sigma_{l_1+j}(\beta)),
    \quad j=1,..., l_1.
\end{aligned}
\end{equation*}
Its norm is 
\begin{equation*}
\begin{aligned}
    \norm{  \scrL_{\beta} } = \left(
       \sum_{i=1}^{l_1} \normm{\sigma_i(\beta)}^2 +4 \sum_{j=1}^{l_2} \normm{\sigma_{l_1+j} (\beta)}^2
    \right)^{\frac{1}{2}} ,
\end{aligned}
\end{equation*}
larger than $\normm{\sigma_{j}(\beta)}$  for every $j=1,...,l_1+l_2$.
Since $\Nm_{k/\Q}(\beta) \geq 1$ for nonzero $\beta \in \calO_k$, we have
\begin{equation}\label{equation_lower_bound_algebraic_integer}
\begin{aligned}
     \normm{\sigma_{l_1+j}(\beta)} \geq \frac{1}{  \norm{\scrL_{\beta}}  ^{l-1} }.
\end{aligned}
\end{equation}

Define $\delta_1,\delta_2 \in (0,0.5)$ by
\begin{equation}\label{equation_define_delta_1_delta_2}
\begin{aligned}
    \delta_1:= 0.2,\quad 
    \delta_2 := 0.1\kappa^{-1} (2l+2)^{-2}.
\end{aligned}
\end{equation}

For $\beta_{\neq 0} \in \calO_k$, consider the set
\[
   E(\beta):= \left\{
       \theta \in (0,2\pi] \midd
       \normm{ \mathrm{Re}(\sigma_{l_1+j} (\beta)  e^{i\theta } )  } 
       \leq T^{-\delta_1} \normm{ \sigma_{l_1+j} (\beta)  } 
   \right\}.
\]
Then $\normm{E(\beta)} \leq \frac{2\pi}{T^{\delta_1}}$ and for $\theta \in [0,2\pi) \setminus E(\beta)$,
\begin{equation}\label{equation_estimate_L_beta}
\begin{aligned}
     \normm{ \scrL_{\beta}( e^{i\theta} \vec{e}_{l_1+j} )  } =  
     2 \normm{ \mathrm{Re}(\sigma_{l_1+j} (\beta)  e^{i\theta } )  } 
     \geq \frac{2}{T^{\delta_1}  \norm{\scrL_{\beta}}  ^{l-1} }
\end{aligned}
\end{equation}
where we have used Eq.(\ref{equation_lower_bound_algebraic_integer}).
Let us fix another constant $L_1>0$ such that 
\[
    \# \left\{ \beta\in \calO_k \midd 
       \norm{ \scrL_{\beta}  } \leq N
    \right\} \leq L_1 N^{ l_1+l_2 }, \quad \forall\, N >1.
\]
Also let
\[
   E:= \bigcup_{  \norm{\scrL_{\beta}}   \leq L_0 T^{\delta_2 \kappa (2l+2)}
    } E(\beta).
\]
Then by definition of $L_1$ and Eq.(\ref{equation_define_delta_1_delta_2}),
\begin{equation}\label{equation_estimate_E}
   \normm{E} \leq \frac{2\pi  L_1 L_0^{l_1+l_2 } T^{(l_1+l_2) \delta_2 \kappa (2l+2)  }
   }{ T^{\delta_1} } \leq \frac{2\pi L_1 L_0^{l_1+l_2}
   }{  T^{0.1}  }.
\end{equation}

For $\theta \in  [0,2\pi) \setminus E $ and $T$ sufficiently large,
 we hope to verify the triple $( T^{-\delta_2} ,  e^{i\theta} \vec{e}_{l_1+j}  ,  T )$ (was denoted as $(\delta, \vec{\alpha}, N)$ in Lemma  \ref{lemma_Appendix_irrational_lines})
satisfies the condition listed in Lemma  \ref{lemma_Appendix_irrational_lines}, that is to say,
\begin{equation*}
\begin{aligned}
          \normm{  \scrL(e^{i \theta }  \vec{e}_{l_1+j} ) } \geq  \frac{   T^{\delta_2 \kappa (2l+2)}
          }{  T  },\quad \text{for all } \scrL_{\neq 0} \in \Hom_{\Z}(\Lambda,\Z) \; \text{ satisfying } \norm{\scrL} \leq T^{\delta_2 \kappa (2l+2) }.
\end{aligned}
\end{equation*}
Take such an $\scrL$, by assumption, $L_0\scrL = \scrL_{\beta}$ for some $\beta \in \calO_k$. Applying Eq.(\ref{equation_estimate_L_beta}), we get
\begin{equation*}
\begin{aligned}
       \normm{\scrL (e^{i\theta} \vec{e}_{l_1+j})  } = &
       \normm{ L_0 \scrL_{\beta}(e^{i\theta} \vec{e}_{l_1+j})  } 
       \\
       \geq &
       \frac{2L_0}{T^{\delta_1}  \norm{\scrL_{\beta}}^{l-1}  }
       \\
       \geq &
       \frac{2}{L_0^{\delta_2 \kappa (2l+2) -1
       }   }
       \cdot 
       \frac{  T^{1- \delta_1 - \delta_2 \kappa (2l+2)}
       }{ T }.
\end{aligned}
\end{equation*}
By Eq.(\ref{equation_define_delta_1_delta_2}),  $1- \delta_1 -\delta_2\kappa(2l+2) \geq \delta_2 \kappa(2l+2) + 0.1$.
Hence 
\begin{equation*}
\begin{aligned}
     \normm{\scrL (e^{i\theta} \vec{e}_{l_1+j})  } \geq  
      \frac{2 T^{0.1}}{L_0^{\delta_2 \kappa (2l+2) -1
       }   }
       \cdot 
       \frac{  T^{\delta_2 \kappa (2l+2)}
       }{ T } \geq 
       \frac{  T^{\delta_2 \kappa (2l+2)}
       }{ T } 
\end{aligned}
\end{equation*}
when $T$ is sufficiently large. Now Lemma \ref{lemma_Appendix_irrational_lines} is ready to apply for such $\theta$'s.

Let $I_1:= I \cap E$ and $I_2 := I \setminus E$. 
Thus by Eq.(\ref{equation_estimate_E}), $\normm{I_1} \leq  \frac{2\pi L_1 L_0^{l_1+l_2}
   }{  T^{0.1}  }$.
On applying Lemma \ref{lemma_Appendix_irrational_lines}, we find that 
\begin{equation*}
\begin{aligned}
    \frac{1}{T} \int_0^T \int_{I_2} f( [t e^{i \theta} \vec{e}_{l_1+j } ]) \, \diff \theta \diff t
    & =  \normm{ I_2 } \int_{\R^l/\Lambda} f([\vec{v}]) \,\diff [\vec{v}] 
    + \bmO\big(   \Lip(f)  T^{-\delta_2}
    \big)
    \\
    &=
    \normm{ I } \int_{\R^l/\Lambda} f([\vec{v}]) \,\diff [\vec{v}] 
    + \bmO\big( (1 + 2\pi L_1 L_0^{l_1+l_2}  )   \Lip(f)   T^{-\delta_2}
    \big).
\end{aligned}
\end{equation*}
On the other hand,
\begin{equation*}
\begin{aligned}
     \frac{1}{T} \int_0^T \int_{I_1} f( [t e^{i \theta} \vec{e}_{l_1+j}   ]) \, \diff \theta \diff t
     \leq \norm{f }_{\sup} \normm{I_1} 
     \leq  \Lip(f ) \frac{   2\pi L_1 L_0^{l_1+l_2}
       }{  T^{0.1}  }.
\end{aligned}
\end{equation*}
Combining the above two equations,
the proof of part (2) of the Lemma \ref{lemma_equidistribution_coordinate_lines_tori_number_fields} is complete with $\delta:= 0.5 \delta_2$ and sufficiently large $T$ (explicitly depending on $\delta_2, L_0,L_1$).

\end{proof}

\section{Meromorphic continuation of Eisenstein series and counting}\label{appendix_C}

In this appendix take $\Gamma:= \SL_2(\calO_{k})$. This restriction, which is not essential, comes from the reference \cite{Jorgenson_Lang_Asai}.
The purpose of this appendix is to show 
\begin{prop}
There exist $K_0$-invariant continuous functions $c^{+}$ and $c^{-}$ on $Y$ such that 
\[
   \scrD^{\star}(\varphi) = \int_Y \varphi(y) c^{\star}(y) \, \diff \rmm_Y(y)
\]
for $\star= +,-$ where 
$\scrD^{+}(\varphi)$ and $\scrD^{-}(\varphi)$ are as in Theorem \ref{theorem_equidistribution_infinite_homogeneous_quadric}. Consequently, the $c_2(\varphi)$ in Theorem \ref{theorem_smoothed_count_quadric} tends to a constant as the support of $\varphi$ shrinks to some singleton $y_0\in Y$.
\end{prop}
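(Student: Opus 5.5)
We need $c^{\pm}$ with $\scrD^{\pm}(\varphi)=\int\varphi\,c^{\pm}\,\diff\rmm_Y$. We treat $\scrD^+$; $\scrD^-$ is symmetric, using $U^-$ and the opposite cusp direction. The plan is to recognize $\scrD^+$ as a regularized integral of $\varphi$ against the $H^{\Delta}$-orbit of the measure $\rmm^U_{[H^{(1)}]}$, and then to identify that regularization with a constant-term-at-$s=1$ of a Hilbert--Asai Eisenstein series $E(g,s)$ for $\Gamma=\SL_2(\calO_k)$.

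\textbf{Step 1: rewrite $\scrD^+$ as a regularized orbital integral.} Integrate by parts in the definition Eq.(\ref{equation_distributions_+-}) and substitute $x=\theta^{-l}$. Up to a constant, this gives
\[
\scrD^+(\varphi)=\lim_{R\to\infty}\Big(\int_{R^{-1}}^{\infty}F_\varphi(\theta)\,\frac{\diff\theta}{\theta}-\log R\int\varphi\,\diff\rmm_Y\Big),
\]
where $F_\varphi(\theta):=\int\varphi(h^{\Delta}_\theta y)\,\diff\rmm^U_{[H^{(1)}]}(y)$. Here $F_\varphi(\theta)\to\int\varphi$ with a polynomial rate as $\theta\to0$ (equidistribution of expanding horospheres, as in Theorem \ref{theorem_Holder_continuous_quadrics}), and $F_\varphi(\theta)=0$ for $\theta$ large because the orbit escapes into the cusp.

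\textbf{Step 2: unfold.} Since $\rmm^U_{[H^{(1)}]}$ is the $H^{(1)}U$-invariant measure on the closed orbit $H^{(1)}U\Gamma/\Gamma$, unfolding shows that $\int_0^\infty F_\varphi(\theta)\theta^{2s}\,\diff\theta/\theta$ equals, up to a normalizing constant, $\int_Y\varphi(y)E(y,s)\,\diff\rmm_Y(y)$. Here $E(g,s)=\sum_{\gamma\in\Gamma_\infty\bs\Gamma}\norm{g\gamma e_1}_{k_\infty}^{-2s}$, or its appropriate $K_0$-invariant variant; the $K_0$-invariance of $\varphi$ is exactly what is needed for this identification. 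For $\Gamma=\SL_2(\calO_k)$ the stabilizer of $e_1$ together with $H^{(1)}\cap\Gamma$ is $\Gamma_\infty$, possibly up to torsion and class-number issues, which we handle by summing over cusps if necessary. The identity holds for $\mathrm{Re}(s)>1$.

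\textbf{Step 3: continue to $s=1$.} Use the meromorphic continuation of \cite{Jorgenson_Lang_Asai}: $E(g,s)$ has a simple pole at $s=1$ with constant residue $\kappa$, so $E(g,s)=\kappa/(s-1)+E^*(g)+O(s-1)$ locally uniformly in $g$, with $E^*$ continuous. On the Mellin side, Step 1 gives $\int_0^\infty F_\varphi\theta^{2s}\,\diff\theta/\theta=\frac{1}{2(s-1)}\int\varphi+\scrD^+(\varphi)+o(1)$. To see this, split the integral at $\theta=1$, write $F_\varphi=\int\varphi+(F_\varphi-\int\varphi)$ on $(0,1)$, and use the polynomial decay of $F_\varphi-\int\varphi$ to pass to the limit. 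Comparing the two Laurent expansions identifies $\scrD^+(\varphi)=c\int\varphi\, E^*\,\diff\rmm_Y$ plus a multiple of $\int\varphi$, which we absorb. This defines $c^+$, and it is continuous and $K_0$-invariant. The final consequence follows since $c_2(\varphi)=\int\varphi(c^++c^-)\to(c^++c^-)(y_0)$ for $\varphi$ forming an approximate identity.

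\textbf{Main obstacle.} The hardest step is the exact bookkeeping in Step 2. One must match the normalizations of $\rmm^U_{[H^{(1)}]}$, of $H^{\Delta}$ versus $\norm{\cdot}_{k_\infty}$, and of the exponent $2s$ versus $l$. One must also confirm that Asai's Eisenstein series, which is built from the full norm on $k_\infty$, is the one that arises, rather than a series with unit-character twists. If $H^{(1)}$-averaging does not kill the nontrivial Hecke characters of the unit group, we first check that averaging over $[H^{(1)}]$ projects onto the trivial-character series. Justifying the interchange of limits also requires uniform growth bounds of $E(\cdot,s)$ near $s=1$ on $\supp\varphi$, which the continuation provides.
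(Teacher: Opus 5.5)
Your route is the paper's: integrate by parts to get a regularized integral of $F_\varphi$, insert a Mellin parameter, unfold against the $H^{(1)}U$-invariant measure to get $\int_Y\varphi\,E(\cdot,s)\,\diff\rmm_Y$, and read $c^+$ off the constant term of the Laurent expansion, with the polar parts cancelling. However, the bookkeeping you defer is wrong in ways that break Steps 1 and 3 as written.

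\textbf{Orientation of $F_\varphi$.} In the paper's variable $x=\norm{h^{\Delta}_\theta e_1}_{k_\infty}^{-1}=\theta^{-l}$, the orbit $h^{\Delta}_\theta H^{(1)}U\Gamma/\Gamma$ equidistributes as $x\to0$, i.e.\ as $\theta\to\infty$. It leaves every compact set as $\theta\to0$, since it contains the vectors $\theta\,he_1$ with $h$ in a compact fundamental set for $[H^{(1)}]$. So $F_\varphi$ vanishes near $0$ and tends to $\int\varphi\,\diff\rmm_Y$ at $\infty$, the opposite of what you wrote. Your $\int_{R^{-1}}^\infty F_\varphi\,\frac{\diff\theta}{\theta}$ therefore diverges. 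The same integration by parts gives, exactly,
\[
\scrD^+(\varphi)=\lim_{R\to\infty}\Big(\int_0^R F_\varphi(\theta)\,\frac{\diff\theta}{\theta}-\log R\int\varphi\,\diff\rmm_Y\Big).
\]

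\textbf{The Mellin weight.} Write $g=kh^{\Delta}_\theta n$ with $n\in H^{(1)}U$. In these coordinates Haar measure is proportional to $\diff k\,\theta^{2l}\frac{\diff\theta}{\theta}\diff n$, and $\norm{kh^\Delta_\theta ne_1}_{k_\infty}^{-2s}=\theta^{-2ls}$. So unfolding makes $\int_Y\varphi\,E(\cdot,s)\,\diff\rmm_Y$ proportional to $\int_0^\infty F_\varphi(\theta)\theta^{-2l(s-1)}\frac{\diff\theta}{\theta}$. This converges exactly for $\mathrm{Re}(s)>1$. Splitting at $\theta=1$ and subtracting $\int\varphi$ on $(1,\infty)$, it equals $\frac{1}{2l(s-1)}\int\varphi\,\diff\rmm_Y+\scrD^+(\varphi)+o(1)$ as $s\to1^+$. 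With your weight $\theta^{2s}$, even granting your description of $F_\varphi$, the polar term is $\frac{1}{2s}\int\varphi$, at $s=0$. It then cannot be matched with the Eisenstein pole at $s=1$.

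\textbf{Primitive versus non-primitive Eisenstein series.} The series you unfold to, $\sum_{\gamma\in\Gamma/\Gamma\cap H^{(1)}U}\norm{g\gamma e_1}_{k_\infty}^{-2s}$, is the primitive series $E^*(\cdot,s,\calO_k)$. It sums over pairs with $\calO_k\alpha+\calO_k\beta=\calO_k$. But \cite{Jorgenson_Lang_Asai} gives the Laurent expansion only for the non-primitive $E(\cdot,s,\frakK)$. Summing over cusps does not bridge this. The paper writes $\vec{E}=M(s)\vec{E^*}$ with $M_{ij}(s)=\zeta(2s,\frakK_j^{-1}\frakK_i)$. It diagonalizes $M(s)$ by the characters of $\mathrm{Cl}(\calO_k)$, with eigenvalues $L(2s,\chi^{-1})$, which are nonzero near $s=1$. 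Inverting gives $E^*(z,s,\frakK)=\frac{0.5c_{-1,E^*}}{s-1}+c_{0,E^*,\frakK}(z)+O(|s-1|)$ with $c_{0,E^*,\frakK}$ continuous. You could instead invoke the general theory of Eisenstein series at the cusp $\infty$ of $\SL_2(\calO_k)$, but then that, not \cite{Jorgenson_Lang_Asai}, is your input.

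Your worry about unit-character twists is moot: the kernel $g\mapsto\norm{ge_1}_{k_\infty}^{-2s}$ is right $H^{(1)}U$-invariant, so only the trivial character appears. With these three corrections your comparison of Laurent expansions is exactly the paper's, and it yields $c^+(y)=c_G^+\,c_{0,E^*,\calO_k}(z_y)$.
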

For the sake of simplicity, we only deal with $\scrD^+$.

\subsection{Siegel-like transforms}
Consider
\[
\begin{tikzcd}
& K_0 \bs G / H^{(1)}U \cap \Gamma 
 \arrow[dl] \arrow[dr] & \\
 K_0 \bs G /  \Gamma 
& &  K_0 \bs G / H^{(1)}U  \cong \R^+
\end{tikzcd}
\]
where the last isomorphism is provided by $x\in \R^+$ mapped to the coset containing $h^{\Delta}_{x^{-1/l}}$.
Conversely, given $K_0g H^{(1)}U$, the corresponding $x\in \R^+$ can be recovered as $x= \norm{g.e_1}_{k_{\infty}}^{-1}$.
We fix some $c_{G}^+>0$ such that under the surjection, 
\[
   K_0 \times \R^+ \times H^{(1)}U \to G,\quad
   (k,x,u)\mapsto kh^{\Delta}_{x^{-1/l}}u
\]
the measure $\rmm_{K_0}\otimes l^{-1}x^{-2} \frac{\diff x}{x} \otimes \rmm_{H^{(1)}U}$ is sent to $c_{G}^+ \rmm_G$.
Here $\rmm_{K_0}$ is the probability Haar measure and $\rmm_{H^{(1)}U}$ and $\rmm_G$ are chosen such that the corresponding measure on the quotient space $G/\Gamma$ is a probability measure.
The factor $x^{-2}$ appears because the modular character for $ h^{\Delta}_{x^{-1/l} } $ on $H^{(1)}U$ is $x^{-2}$.
With this convention in mind, let $\calB_{\geq 0}(X)$  be the set of non-negative Borel measureable functions on some topological space $X$.
Define  Siegel-like transforms $\calF: \calB_{\geq 0}(K_0 \bs G /  \Gamma) \to \calB_{\geq 0}(\R^+)$ and $\calG: \calB_{\geq 0}(\R^+) \to \calB_{\geq 0}(K_0 \bs G /  \Gamma)$ via
\begin{equation*}
\begin{aligned}
      \calF(\varphi)(x)
      &:= \int\varphi( h^{\Delta}_{x^{-1/l} } .z) \, \diff \rmm^U_{[H^{(1)}]}(z),
      \\
      \calG(f)([g])
      & := \sum_{\gamma \in \Gamma/\Gamma \cap H^{(1)}U } f ( \norm{g\gamma.e_1}^{-1}_{k_{\infty}}  ).
\end{aligned}
\end{equation*}
Then 
\[
     \int \calF(\varphi)(x)  f(x)  \, \frac{1}{l}x^{-2} \frac{\diff x}{x} = c_G^+ \int \varphi([g]) \calG(f)([g])\, \diff \rmm_Y(y).
\]
With these notations, rewrite $\scrD^+(\varphi)$ as (so from now on, we assume $\varphi\geq 0$ smooth, compactly supported):
\begin{equation*}
\begin{aligned}
      \scrD^+(\varphi)
      =& \frac{1}{l} \int_0^{\infty} \calF(\varphi)'(x)(-\log {x}) \, \diff x
      \\
      =&
      \frac{1}{l} \int_0^{1} \big( \calF(\varphi)(x)- \int \varphi \, \diff \rmm_Y \big)' (-\log {x}) \, \diff x
      +
      \frac{1}{l} \int_1^{\infty} \calF(\varphi)'(x)(-\log {x}) \, \diff x
      \\
      =&
       \frac{1}{l} \int_0^{1} \big( \calF(\varphi)(x) - \int \varphi  \big) \, \diff \rmm_Y   \, x^{-1} \diff x
       + \frac{1}{l} \int_1^{\infty} \calF(\varphi)(x) \, x^{-1} \diff x.
\end{aligned}
\end{equation*}
Now we introduce an additional parameter $s>0$,
\begin{equation*}
\begin{aligned}
      \scrD^+(\varphi)
      =& 
      \lim_{s\to 0^+} \frac{1}{l} \int_0^{1}  \big( \calF(\varphi)(x) - \int \varphi  \, \diff \rmm_Y \big) 
       \, x^{s-1} \diff x
       + \frac{1}{l} \int_1^{\infty} \calF(\varphi)(x) \, x^{s-1} \diff x
      \\
      =& 
      \lim_{s\to 0^+}
      -\frac{\int \varphi \, \diff\rmm_Y}{ls }
      +
        \int_0^{\infty}  \calF(\varphi)(x)  x^{s+2}\, \frac{1}{l}x^{-2} \frac{\diff x}{x}
        \\
        =&
         \lim_{s\to 0^+}
      -\frac{\int \varphi \, \diff\rmm_Y}{ ls }
      +
       c_G^+ \int_Y  \varphi(y) \calG( x^{s+2} )(y)\, \diff \rmm_Y(y).
\end{aligned}
\end{equation*}
Before showing that $ \calG( x^{s+2} )$ is an Eisenstein series, we firstly review some preliminaries from \cite[Section 3]{Jorgenson_Lang_Asai}.

\subsection{The pole of primitive Eisenstein series}

\subsubsection{Eisenstein series}
Let us recall some .
Let 
\[
    \bmh_{\R}:= \left\{ x+iy \midd x\in \R ,y \in \R^+\right\}
    ,
    \quad
    \bmh_{\C}:= \left\{ z=
    \begin{bmatrix}
    x & -y \\
    y & \overline{x}
    \end{bmatrix}
     \midd x\in \C ,y \in \R^+\right\}.
\]
Then $\SL_2(\R)$ (resp. $\SL_2(\C)$) acts on $\bmh_{\R}$ (resp. $\bmh_{\C}$).
Let $\bmh_{k_{\infty}}:= \prod_{i=1,...,l_1+l_2} \bmh_{k_{\nu_i}}$. Then $G=\SL_2(k_{\infty})$ acts on $\bmh_{k_{\infty}}$.
Given $\vec{z}=(z_i)\in \bmh_{k_{\infty}}$, define
\begin{equation}\label{definition_Ny}
    \bmN y( \vec{z} ):= \prod_{i=1}^{l_1+l_2}  y(z_i)^{\epsilon_i}
\end{equation}
with $\epsilon_i=1$ if $k_{\nu_i}\cong \R$ and $\epsilon_i =2$ if $k_{\nu_i}\cong \C$.

For a pair $(\alpha,\beta)\in k^2 \setminus \{(0,0)\}$, let 
\[
    \{ \alpha, \beta \}:= \left\{
     (\alpha', \beta') \midd (u\alpha, u\beta)= (\alpha', \beta') ,\; \exists \, u \, \in \calO_k^{\times}
    \right\}
\]
Given $(\alpha,\beta)\in k^2 \setminus \{(0,0)\}$, define
\[
    \bmN y (\alpha, \beta; \vec{z}):= \prod_{i=1}^{l_1+l_2} \frac{y(z_i)}{
    \normm{\sigma_i(\alpha) x(z_i)+ \beta} + \normm{\sigma_i(\alpha)}^2 y(z_i)^2
    },
\]
which depends only on the equivalence class $\{\alpha,\beta\}$.
For a fractional ideal $\fraka$ of $\calO_k$, let 
\begin{equation*}
\begin{aligned}
   \mathrm{Equ}(\fraka)
   &:= \left\{ \{\alpha, \beta\}
   \midd
   \calO_k \alpha + \calO_k \beta \subset \fraka
   \right\}
   \\
   \mathrm{Equ}^*(\fraka)
   &:= \left\{ \{\alpha, \beta\}
   \midd
   \calO_k \alpha + \calO_k \beta = \fraka
   \right\}
\end{aligned}
\end{equation*}
and
\begin{equation*}
\begin{aligned}
       E(\vec{z},s,\fraka) &:= \sum_{\{\alpha,\beta\} \in \mathrm{Equ}(\fraka)} 
       \bmN y (\alpha, \beta; \vec{z})^s \Nm(\fraka)^{2s}
       ,
       \\
        E^*(\vec{z},s,\fraka) &:= \sum_{\{\alpha,\beta\} \in \mathrm{Equ}^*(\fraka)} 
       \bmN y (\alpha, \beta; \vec{z})^s \Nm(\fraka)^{2s}.
\end{aligned}
\end{equation*}
It follows from the definition that $ E(\vec{z},s,\fraka)$ and $ E^*(\vec{z},s,\fraka)$ only depend on the ideal class containing $\fraka$.
Let $\mathrm{Cl}(\calO_k)$ be the ideal class group of $\calO_k$.
In \cite[Section 3, Equation (2)]{Jorgenson_Lang_Asai}, for $\fraka\in \frakK\in \mathrm{Cl}(\calO_k)$, it is defined that
\begin{equation*}
\begin{aligned}
       E(\vec{z},s,\mathfrak{K}) := E(\vec{z},s,\fraka^{-1}),
       \quad 
       E^*(\vec{z},s,\frakK) := E^*(\vec{z},s,\fraka^{-1}).
\end{aligned}
\end{equation*}
The meromorphic continuation property of $E(\vec{z},s,\mathfrak{K})$ is known (see \cite[Theorem 3.3]{Jorgenson_Lang_Asai}):
\begin{thm}\label{theorem_Eisenstein_series}
Let $\Gamma(s)$ be the standard Gamma function and define 
\[
    G_k(2s):= \mathrm{disc}(\calO_k) \pi^{-sl} \Gamma(s)^{l_1}\Gamma(2s)^{l_2}.
\]
Then there exists a continuous function $c_{E,\frakK}: K_0 \bs G/ \Gamma \to \R$ satisfying
\[
        E(z,s,\frakK) G_k(2s) = \left( 
        \underset{s=1}{\mathrm{Res}} \, \zeta_k(s, \frakK) G_k(s)
        \right)
        \Big[
        \frac{0.5}{s-1} + c_{E,\frakK}(z)
        \Big] + O(|s-1|)
\]
where $\zeta(s, \frakK):= \sum_{I \in \frakK} \frac{1}{\mathrm{Nm}(I)^s  }$.
Consequently, for some $c_{-1,E}>0$ independent of $z$ and some continuous function $c_{0,E,\frakK}(z):  K_0 \bs G/ \Gamma \to \R$, we have
\[
     E(z,s,\frakK) = \frac{0.5c_{-1,E}}{s-1} + c_{0,E,\frakK}(z) + O(|s-1|).
\]
\end{thm}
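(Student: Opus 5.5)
The statement is quoted from \cite[Theorem 3.3]{Jorgenson_Lang_Asai}, so the plan has two parts: recall how the first (displayed) expansion is obtained, which is the real content, and then deduce the ``consequently'' part from it by elementary manipulation. For the first part I would follow the classical route for Hilbert--Asai Eisenstein series. Fix a cusp representative for the ideal class $\frakK$ and decompose the sum over $\mathrm{Equ}(\fraka^{-1})$ according to whether $\alpha=0$ or $\alpha\neq0$. I would then compute the Fourier expansion of $E(\vec z,s,\frakK)$ along the unipotent subgroup $U\cap\Gamma$, i.e. over the dual lattice of $\calO_k$ (or of the relevant fractional ideal). This is done by Poisson summation in $\beta$ for fixed $\alpha$, after factoring out $\calO_k^{\times}$.

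The expansion has the shape
\[
E(\vec z,s,\frakK)=a(s)\,\bmN y(\vec z)^s+b(s)\,\bmN y(\vec z)^{1-s}+\sum_{\mu\neq0}c_\mu(s)\,W_\mu(\vec z,s),
\]
where each $W_\mu$ is a product over the archimedean places of $K$-Bessel functions. The coefficient $b(s)$ is, up to explicit Gamma factors (which is why the completion $G_k(2s)$ enters), a ratio of partial Dedekind zeta functions of the form $\zeta_k(2s-1,\cdot)/\zeta_k(2s,\cdot)$. The coefficients $c_\mu(s)$ involve divisor-type sums that are entire in $s$.

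Next I would check that, after multiplying by $G_k(2s)$, the nonconstant sum converges absolutely and locally uniformly in $\vec z$ for $s$ near $1$. This uses the exponential decay of $K$-Bessel functions together with polynomial bounds on the divisor sums. The sum is therefore holomorphic at $s=1$ and continuous in $\vec z$, and $a(s)G_k(2s)$ is holomorphic at $s=1$ as well. Consequently the only pole at $s=1$ comes from $b(s)G_k(2s)$, through the simple pole of $\zeta_k(2s-1,\frakK)$. Its residue equals $\tfrac12\,\mathrm{Res}_{s=1}\zeta_k(s,\frakK)G_k(s)$, where the factor $\tfrac12$ comes from the change of variable $2s-1$, and this residue is independent of $\vec z$ because $\bmN y^{1-s}\to1$ as $s\to1$.

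Collecting the $s$-independent parts of the Laurent expansion then gives the function $c_{E,\frakK}(\vec z)$. It is continuous, being the sum of $\log\bmN y$-type terms, the derivative term, and the uniformly convergent Bessel series. It is $K_0$-invariant because $E$ is a function on $\bmh_{k_\infty}\cong K_0\bs G$, and $\Gamma$-invariant by construction. I expect the main obstacle to be this uniform convergence and continuity across all cusps (one for each ideal class), which is exactly where the Bessel-function estimates are needed. The rest is bookkeeping of Gamma factors.

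For the ``consequently'' part, $G_k(2s)$ is holomorphic and nonzero at $s=1$, with $G_k(2)>0$. Write $G_k(2s)^{-1}=G_k(2)^{-1}\bigl(1-\tfrac{2G_k'(2)}{G_k(2)}(s-1)+O(|s-1|^2)\bigr)$ and set $R:=\mathrm{Res}_{s=1}\zeta_k(s,\frakK)G_k(s)$. Dividing the displayed expansion by $G_k(2s)$ gives
\[
E(\vec z,s,\frakK)=\frac{R}{G_k(2)}\Bigl[\frac{0.5}{s-1}+c_{E,\frakK}(\vec z)-\frac{G_k'(2)}{G_k(2)}\Bigr]+O(|s-1|).
\]
So we may take $c_{-1,E}:=R/G_k(2)$ and $c_{0,E,\frakK}(\vec z):=\frac{R}{G_k(2)}\bigl(c_{E,\frakK}(\vec z)-G_k'(2)/G_k(2)\bigr)$; the latter is continuous and $K_0$-invariant. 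Positivity of $c_{-1,E}$ follows because $\zeta_k(s,\frakK)$ is a Dirichlet series with positive coefficients, so its residue at $s=1$ is positive, and because $G_k(1)>0$ and $G_k(2)>0$. Finally, $c_{-1,E}$ is independent of $\frakK$, since the residue of the partial zeta function is the same for every ideal class (it is given by the class number formula divided by the class number).
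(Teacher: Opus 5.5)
The paper gives no argument for the first display. It is imported from \cite[Theorem 3.3]{Jorgenson_Lang_Asai}, which ultimately rests on Asai's Kronecker limit formula, and the second display is left as an evident corollary. Your sketch is the standard argument behind that reference: Poisson summation in $\beta$ for fixed $\alpha$, isolating the constant term $a(s)\bmN y^s+b(s)\bmN y^{1-s}$, and controlling the Bessel part. Your derivation of the second display from the first, by expanding $G_k(2s)^{-1}$ at $s=1$, is correct. It also makes explicit two facts the paper uses silently:
\begin{enumerate}
\item[(i)] the $O(|s-1|)$ term is locally uniform in $z$, which is needed when the expansion is later integrated against $\varphi$;
\item[(ii)] $c_{-1,E}=G_k(1)\,\mathrm{Res}_{s=1}\zeta_k(s,\frakK)/G_k(2)$ does not depend on $\frakK$, which is needed for $c_{-1,E^*}=c_{-1,E}/\zeta_k(2)$.
\end{enumerate}

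One statement in your sketch is wrong and should be corrected. The series $E(\vec z,s,\frakK)=E(\vec z,s,\fraka^{-1})$ sums over \emph{all} pairs in $\fraka^{-1}\times\fraka^{-1}$, not only primitive ones. Consequently:
\begin{itemize}
\item $a(s)$ is a multiple of $\zeta_k(2s,\frakK)$;
\item $b(s)$ is a Gamma ratio times $\zeta_k(2s-1,\frakK)$, with no partial zeta function in the denominator.
\end{itemize}
The quotient $\zeta_k(2s-1,\cdot)/\zeta_k(2s,\cdot)$ belongs to the primitive series $E^*$, not to $E$. Taken literally, your ratio would put an extra factor $1/\zeta_k(2,\cdot)$ into the residue. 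That contradicts the value $\tfrac12\,\mathrm{Res}_{s=1}\zeta_k(s,\frakK)G_k(s)$, which you correctly assert in the next sentence.

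Also, the obstacle you anticipate at the other cusps does not arise. The expansion at $\infty$ converges absolutely and locally uniformly on all of $\bmh_{k_\infty}$. The divisor sums are $O(\normm{\Nm(\nu)}^{\epsilon})$, and the archimedean Bessel factors are bounded near $0$ and decay exponentially. So continuity on $K_0\bs G/\Gamma$ already follows from the single expansion at $\infty$.
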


\subsubsection{Primitive Eisenstein series}
What we need, though, is the behaviour of $E^*(z,s,\frakK)$. For this, note that from the definition,
\begin{equation*}
\begin{aligned}
           E(z, s, \frakK) =  \sum_{\mathfrak{L} \in \mathrm{Cl}(\calO_k)} \zeta(2s, \mathfrak{L}^{-1} \frakK) E^*(z, s,\mathfrak{L} ).
\end{aligned}
\end{equation*}
Let $h:= \normm{    \mathrm{Cl}(\calO_k)
}$ and enumerate $\mathrm{Cl}(\calO_k) = \{ \frakK_1,..., \frakK_h\}$.
Let $M(s)$ be the $h$-by-$h$ matrix whose $(i,j)$-th entry is
\[
     M_{ij}(s) :=  \zeta(2s, \frakK_j^{-1} \frakK_i).
\]
Let 
\[
    \vec{E}(z,s)  := \left( E(z, s, \frakK_1), ....,   E(z, s, \frakK_h) \right)^{\tr},\quad
     \vec{E^*}(z,s):= \left( E^*(z, s, \frakK_1), ....,   E^*(z, s, \frakK_h) \right)^{\tr}.
\]
Then 
\[
       \vec{E}(z,s) = M(s)  \cdot   \vec{E^*}(z,s) \implies \vec{E^*}(z,s) = M(s)^{-1} \cdot \vec{E}(z,s).
\]
Indeed, $M(s)$ is invertible for $s$ close to $1$. To see this, let $\mathrm{Cl}(\calO_k)^{\vee}: = \Hom(\mathrm{Cl}(\calO_k), S^1
)$ be the abelian group of characters where $S^1$ denotes the unit circle in $\C$. The cardinality of $\mathrm{Cl}(\calO_k)^{\vee}$ is exactly $h$, enumerated as $(\chi_1,...,\chi_h)$, and they form an ortho-normal basis of complex functions on $\mathrm{Cl}(\calO_k)$ under the Hermitian form
\[
        \la \chi, \theta \ra := \frac{1}{h} \sum_{i=1}^h \chi(\frakK_i) \overline{\theta(\frakK_i)}.
\]
 For a character $\chi \in \mathrm{Cl}(\calO_k)^{\vee}$, 
\begin{equation*}
\begin{aligned}
      L(s,\chi) := \sum_{\mathfrak{a} \ideal \calO_k} \frac{\chi([\mathfrak{a}])
      }{ \mathrm{Nm}( \mathfrak{a} )^s },\quad
      \quad \vec{v}_{\chi}:=
      \left(
       \chi(\frakK_1),..., \chi(\frakK_h)
      \right)^{\tr}.
\end{aligned}
\end{equation*}
Note that 
\[
    L( s,\chi) = \sum_{i=1}^h  \psi(\frakK_i) \zeta(s, \frakK_i),
\]
and the matrix $V:= ( \vec{v}_{\chi_1},...,\vec{v}_{\chi_h} )$ is unitary:
\[
     (V^{-1})_{ij} = \overline{V_{ji}} = \overline{ \chi_i (\frakK_j)} = \chi_i(\frakK_j)^{-1}.
\]
Therefore,
\begin{equation*}
\begin{aligned}
        M(s) \cdot \vec{v}_{\chi}= L(2s, \chi^{-1})  \vec{v}_{\chi} .
\end{aligned}
\end{equation*}
The eigenvectors $(\vec{v}_{\chi_i} )$ with eigenvalues $( L(2s, \chi_i^{-1}) )$ form a basis, implying that
\[
         M(s) = V \cdot \diag \big(
        L(2s, \chi^{-1}_1)  , ...,  L(2s, \chi^{-1}_h)
        \big)  \cdot V^{-1}.
\]
Hence the $(i,j)$-th entry of $M(s)^{-1}$ is given by
\[
     (M(s)^{-1})_{ij} = \sum_{\lambda =1}^h  \frac{
     \chi_{\lambda}(\frakK_ i)
     }{   L(2s, \chi^{-1}_{\lambda} )
     } \chi_{\lambda}(\frakK_j)^{-1}.
\]
and therefore,
\[
        E^*(z,s, \frakK_i) = \sum_{j,\lambda}   \frac{
     \chi_{\lambda}(\frakK_ i)
     }{   L(2s, \chi^{-1}_{\lambda} )
     } \chi_{\lambda}(\frakK_j)^{-1} E(z,s,\frakK_j).
\]
Plugging into Theorem \ref{theorem_Eisenstein_series}, let
\begin{equation*}
\begin{aligned}
    c_{-1,E^*} &:= \sum_{j,\lambda}   \frac{
     \chi_{\lambda}(\frakK_ i)
     }{   L(2, \chi^{-1}_{\lambda} )
     } \chi_{\lambda}(\frakK_j)^{-1} {c_{-1,E}}
     =  \frac{c_{-1,E} }{\zeta_k(2) },
     \\
     c_{0,E^*,\frakK_i}(z)
     &:=
      \sum_{j,\lambda}   \frac{
     \chi_{\lambda}(\frakK_ i)
     }{   L(2s, \chi^{-1}_{\lambda} )
     } \chi_{\lambda}(\frakK_j)^{-1} c_{0,E,\frakK_i}(z).
\end{aligned}
\end{equation*}
Then
\begin{equation}\label{equation_primitive_Eisenstein_series}
\begin{aligned}
         E^*(z,s,\frakK) =\frac{0.5 c_{-1,E^*}}{s-1} + c_{0,E^*,\frakK}(z) + O(|s-1|).
\end{aligned}
\end{equation}

\subsection{Conclusion}

Identify $K_0 \bs G \cong \bmh_{k_{\infty}}$ by $K_0g\mapsto g^{-1}.o$.
If we write $g= k h^{\Delta}_{x^{-1/l}} u$ with $k\in K_0$, $x\in \R^+$ and $u\in H^{(1)}U$, then by Eq.(\ref{definition_Ny})
\begin{equation*}
    \bmN y(g^{-1}.o) = \bmN y(h^{\Delta}_{x^{1/l}} .o ).
    =
    (x^{2/l})^{l_0} \cdot (x^{2/l})^{ 2l_1} = x^2. 
 \end{equation*}
So we find that $x = \bmN y(g^{-1}.o) ^{1/2}$. Therefore,
\begin{equation*}
\begin{aligned}
         \calG(x^{s+2}) = \sum_{  [\gamma] \in \Gamma/\Gamma \cap H^{(1)}U} \bmN y ( (g\gamma)^{-1}.o )^{\frac{s}{2}+1}
        = \sum_{[\gamma ] \in \Gamma \cap H^{(1)}U \bs  \Gamma} \bmN y ( \gamma g^{-1}.o )^{\frac{s}{2}+1}.
\end{aligned}
\end{equation*}
Note that $\gamma \mapsto (0,1) \cdot \gamma$ induces a bijection
\[
      \Gamma \cap H^{(1)}U \bs  \Gamma \cong \mathrm{Equ}^*(\calO_k).
\]
So
\begin{equation*}
\begin{aligned}
         \calG(x^{s+2}) = \sum_{ \{\alpha,\beta\} \in \mathrm{Equ}^*(\calO_k)
         } \bmN y ( \alpha, \beta; g^{-1}.o )^{\frac{s}{2}+1}
        = E^*(g^{-1}.o, \frac{s}{2}+1, \calO_k).
\end{aligned}
\end{equation*}
Finally,
\begin{equation*}
\begin{aligned}
    \scrD^+(\varphi) &= 
    \lim_{s\to 0^+} 
    \frac{- 1}{ls} \int \varphi \, \diff \rmm_Y + \frac{  c_G^+ c_{-1,E^*}
    }{ s } \int \varphi \, \diff \rmm_Y + c_G^+\int_{Y}
    \varphi(y) c_{0,E^*,\calO_k}(z_y) \,\diff \rmm_Y(y)
    \\
    &=  c_G^+\int_{Y}
    \varphi(y) c_{0,E^*,\calO_k}(z_y) \,\diff \rmm_Y(y)
\end{aligned}
\end{equation*}
where $z_y:= g_{y}^{-1}.o$ if $y= g_y\Gamma$.
Note that the first two terms must cancel since we already knew the limit exists. 
The proof is now complete.

\subsection*{Acknowledgements}
We benefit from discussions with Zhizhong Huang, Fugang Yan and Shucheng Yu. 
I also thank Nimish Shah for 
the $KHU$ decomposition (Lemma \ref{lemma_KAU_decomposition_1}), communicated to the author  several years ago.
The reference \cite{Shi19} was pointed out to us by Zhizhong Huang. 
I am grateful to Hee Oh for her feedbacks on a previous version of the paper.
The author is supported by National Natural Science Foundation of China (No. 12201013).

\bibliographystyle{amsalpha}
\bibliography{ref}

\end{document}